\newtheorem{Theorem}{Theorem}[section]
\theoremstyle{corollary}
\theoremstyle{definition}
\newtheorem{Remark}[Theorem]{Remark}
\newcommand{\dx}{\,{\rm d} {x}}
\newcommand{\R}{\mathbb{R}}
\newcommand{\p}{\mathbb{P}}
\newcommand{\T}{\mathbb{T}}
\theoremstyle{plain}
\newtheorem{thm}{Theorem}[section]
\theoremstyle{plain}
\newtheorem{lem}[thm]{Lemma}
\newtheorem{proposition}[thm]{Proposition}
\newtheorem{cor}[thm]{Corollary}
\theoremstyle{definition}
\newtheorem{definition}{Definition}[section]
\newtheorem{remark}{Remark}[section]
\newtheorem*{maintheorem*}{Main Theorem}
\newtheorem*{maincorollary*}{Main Corollary}
\begin{document}

%%%%%%%%%%%%%%%%%%%%%%%%%%%%%%%%

\title{Stochastic Degenerate Fractional Conservation Laws}

\author{Abhishek Chaudhary}

\date{\today}

\maketitle

\medskip

\centerline{ Centre for Applicable Mathematics, Tata Institute of Fundamental Research}
\centerline{P.O. Box 6503, GKVK Post Office, Bangalore 560065, India}\centerline{abhi@tifrbng.res.in}

%\medskip
%\centerline{$\dagger$ Centre for Applicable Mathematics, Tata Institute of Fundamental Research}
%\centerline{P.O. Box 6503, GKVK Post Office, Bangalore 560065, India}\centerline{ ujjwal@math.tifrbng.res.in}

\medskip
\begin{abstract}
We consider the Cauchy problem for a degenerate fractional conservation laws driven by a noise. In particular, making use of an adapted kinetic formulation, a result of existence and uniqueness of solution is established. Moreover, a unified framework is also established to develop the continuous dependence theory. More precisely, we demonstrate $L^1$-continuous dependence estimates on the initial data, the order of fractional Laplacian, the diffusion matrix, the flux function, and the multiplicative noise function present in the equation.
\end{abstract}
{\bf Keywords:} Degenerate fractional conservation laws; Stochastic forcing; Kinetic solution; Continuous dependence estimate; Viscous solution; Contraction principle; Uniqueness.

\section{Introduction} 
Stochastic degenerate parabolic-hyperbolic equations are one of the most important branches of nonlinear stochastic PDEs. Equations of this type model the phenomenon of convection-diffusion of ideal fluid in porous media. Therefore these are in great demand in fluid mechanics. The study of this type of model equations with non-local operator, considered in \cite{biler}, is motivated by the anomalous diffusion encountered in many physical model. Nonlocal operator appears in mathematical phenomena for fluid flows and acoustic propagation in porous media, viscoelastic materials, and pricing derivative securities in financial markets \cite{Blackledge}.  The addition of a stochastic noise to this physical model is completly natural as it represents external perturbations or a lack of knowledge of certain physical parameters. In this paper, we are interested in the well-posedness theory for the degenerate fractional conservation laws driven by a Brownian noise in any space dimension. A formal description of our problem requires a filtered probability space $(\Omega,\mathcal{F},\mathbb{P},({\mathcal{F}}_t)_{t\,\ge\,0})$. We consider the following stochastic degenerate fractional conservation laws 
\begin{align}
	\label{1.1}
	\begin{cases} 
		d u(x,t)+\mbox{div}(F(u(x,t)))d t +g_x^\lambda[u(x,t)]dt=\mbox{div}\big(A(u)\nabla u\big)dt+\Phi(x, u(x,t))\,dB(t)& \text{in}\,\,\,{\Pi}_T, \vspace{0.1cm}\\
		u(0,x) = u_0(x), &\text{in}\,\,\,\,\, \mathbb{T}^N,
	\end{cases}
\end{align}
where ${\Pi}_T=\T^N\times (0,T)$ with $\T^N$ is the $N$-dimesnsional torus and $T\,\textgreater\, 0$ fixed, $u_0 : \Omega\times\T^3\to\mathbb{R}$ is the given initial random variable, $F:\R\to \R^N$ is a given (sufficiently smooth) vector valued flux function, $A$ is a diffusion matrix  (possibly degenerate) and
$g_x^\lambda$ denotes the fractional Laplace operator $(-\Delta)^\lambda$
of order $\lambda\in  (0,1)$. Note that $B$ is a cylindrical Wiener process, $B=\sum_{k\ge1}w_k \gamma_k$ , where the coefficients $w_k$ are independent Brownian processes and $(\gamma_k)_{k\ge1}$ is a complete orthonormal system in a Hilbert space $\mathfrak{X}$ and $ \Phi : L^2(\mathbb{T}^N )\to L_2(\mathfrak{X}; L^2(\mathbb{T}^N ))$ is $L_2(\mathfrak{X}; L^2(\mathbb{T}^N ))$-valued function, where $L_2(\mathfrak{X};L^2(\mathbb{T}^N))$ denotes the collection of Hilbert-Schmidt operators from $\mathfrak{X}$ to $L^2(\mathbb{T}^N )$(see Section \ref{Sec2} for the complete list of assumptions).
\subsection{Earlier works }
In the absence of non-local term along with the case $\Phi=0$ and $\,A=0$, the equation \eqref{1.1} becomes a standard
conservation laws. In deterministic set-up, entropy solution for conservation laws was studied by Kruzhkov \cite{Kruzhkov} and established well-posedness in $L^\infty$-framework. The entropy formulation of parabolic-hyperbolic problem involving Laray-Lions type operator has been treated by Carrillo \cite{Carrillo}. We refer Bendahmane $\&$ Karlsen \cite{Karlsen} for the more delicate anisotropic diffusion case. We also mention works of Alibaud\cite{Alibaud}, Cifani et.al. \cite{Cifani, jack} for deterministic fractal conservation laws and fractional degenerate convection-diffusion equations, respectively. The concept of kinetic solution was first introduced by Lions, Perthame, and Tadmor in \cite{lions}  for the scalar conservation laws. Chen $\&$ Perthame \cite{chen} developed a well-posedness theory for general degenerate parabolic-hyperbolic equations with non-isotropic nonlinearity. On the other hand, in stochastic set-up, the fundamental work of Kim \cite{Kim} who defined entropy solutions for the stochastic conservation-laws and established well-posedness theory to one-dimensional conservation laws that are driven by additive Brownian noise and Vallet $\&$ Wittbold \cite{Vallet} to multidimensional Dirichlet problem. However, when the noise is of mutiplicative type we refer to Feng $\&$ Nualart \cite{Feng} for one-dimensional balance laws. Debussche $\&$ Vovelle \cite{Debussche} introduced kinetic formulation of stochastic conservation laws and as result they were able to establish the well-posedness of multidimensional stochastic balance laws via kinetic approach. The more delicate anisotropic diffusion case has been treated by Debussche et.al. \cite{vovelle} using, in particular, the insight from the work \cite{hofmanova} of Hofmanova. We also refer the work of G. Lv et al. \cite{Lv} for stochastic nonlocal conservation laws in whole space. A number of authors have contributed in the area of stochastic conservation laws and we mention the works of Biswas et al. \cite{Biswas}, Koley et al. \cite{UKoley,Koley1}, Bhauryal et al. \cite{neeraj,Neeraj2}. In the stochastic setup, the main idea is to successfully capture the noise-noise interaction of the underlying problem since this plays an important role in the well-posedness theory for stochastic PDEs, for details see \cite{ neeraj, Neeraj2, Koley1,koley2013multilevel,Koley3, MKS01, K1, K2}. 
\subsection{Scope and outline of this paper}
Due to the presence of nonlinear flux term, degenerate diffusion term, and a nonlocal term in equation \eqref{1.1}, solutions to \eqref{1.1} are not necessarily smooth and weak solutions must be sought. In comparison to the notion of entropy solution introduced by Kruzkov\cite{Kruzhkov}, adapted notion of kinetic solutions seems to be better suited particularly for the degenerate fractional conservation laws, since this allows us to keep the precise structure of the parabolic dissipative measure, whereas in the case of entropy solution part of this information is lost and has to be recovered at some stage. To sum up, we aim at developing following results related to \eqref{1.1}:
 \begin{itemize}
 	\item[(1)] Drawing primary
 	motivation from \cite{sylvain, vovelle, hofmanova}, we establish the well-posedness theory of the kinetic solution to the Cauchy problem \eqref{1.1} by using vanishing viscosity method along with few a priori bounds. We also derive contraction principle, in which we employ a Kruzhkov doubling of variable technique and attempt to bound the difference of their kinetic solutions (see Theorem \ref{main theorem 1}).
 	\item[(2)]  For $\Phi=\Phi(u)$, making use of BV estimate, we also develop a unified framework to derive the continuous dependence estimates for the Cauchy problem \eqref{1.1} on initial data, order of fractional Laplacian, the flux function, and the multiplicative noise function (see Theorem \ref{main theorem 2}). Whenever $\Phi=\Phi(u,x)$ has dependency on the spatial variable $x$, BV estimates are no longer available even for the stochastic conservation laws (see \cite{chen}). 
 	\end{itemize}

   As an important part in this present study, formulation of kinetic solution is weak in space variable but strong in time variable, as proposed in \cite{sylvain}.  This formulation enjoys a c\'adl\'ag condition which ensures that solutions to \eqref{1.1} have continuous trajectories in $L^p(\mathbb{T}^N)$(see Corollaries \ref{continuity}\,\&\,\ref{continuity1}). This also
   allows us to obtain convergence of approximations for each time t (see remark \ref{fixed time}). In the case of hyperbolic scalar conservation laws, Dotti $\&$ Vovelle \cite{sylvain} defined a notion of generalized kinetic solution and obtained a comparison result showing that any generalized kinetic solution is actually a kinetic solution. To that context, difficulties are quite different in the our case. Indeed, due to structure of kinetic measure corresponding to non-local and second order terms, the comparison principle can be proved only for kinetic solutions and therefore, strong convergence of approximate solutions is needed in order to prove existence.  The main novelty of this work lies in successfully handling the nonlocal term (fractional Laplacian) in the proof of both comparison principle and continuous dependence estimate (see step 2 of both proofs).  We emphasize that the analysis presented in this manuscript differs significantly from the work in the stochastic nonlocal conservation laws on whole space \cite{Lv}, mainly because our formulation is different. Our method is inspired by the work \cite{sylvain}, where the authors used same notion of kinetic formulation for the stochastic conservation laws.

The paper is organized as follows. In Section~\ref{Sec2} and Section~\ref{3}, we give detials of basic setting, define the notion of kinetic solution, and state our main results, Theorem \ref{th2.10}\,$\&$ Theorem \ref{main theorem 2}. Section \ref{Sec3} is devoted to the proof of uniqueness part of Theorem \ref{th2.10} with the $L^1$-comparison principle, Theorem \ref{th3.5}. The existence part of Theorem \ref{th2.10} is done in Section \ref{Sec4}. The remainder of the paper deals with  BV estimate and continuous dependence estimate (proof of Theorem \ref{main theorem 2}), given in Section \ref{section5}. At the end, in Appendix \ref{A}, derivation of kinetic formulation is given.
\section{Assumptions and preliminaries}\label{Sec2}
\subsection{Hypotheses.} We now give the precise assumptions on each of the term appearing in the equation \eqref{1.1}. 

\noindent
\textbf{Flux function:} Let  $F=(F_1,F_2,....,F_N):\mathbb{R}\to\mathbb{R}^N$ be  $C^2(\mathbb{R};\mathbb{R}^N)$-smooth flux function with a polynomial growth of its derivative, in the following sense: there exists $q^*\,\ge\,1$ , $C\,\ge\,0 $, such that
\begin{align}\label{F.1}
	|F'(\xi)|\,\le\,C (1+|\xi|^{q^*-1}),
\end{align}
\begin{align}\label{F.2}
	\sup_{|\zeta|\,\le\,\delta}|F'(\xi)-F'(\xi+\zeta)|\,\le\,C (1+|\xi|^{q^*-1})\,\delta,
\end{align}
\textbf{Diffusion matrix:}
The diffusion matrix
$A=\big(A_{ij}\big)_{i,j=1}^N:\mathbb{R}\to\mathbb{R}^{N\times N}$
is positive semidefinite and symmetric. Its square-root matrix, which is also positive semidefinite and symmetric, is denoted by $\sigma$. We assume that $\sigma$ is bounded and locally $\gamma$-H\"older continuous for some $\gamma\,\textgreater\,\frac{1}{2}$, i.e.
\begin{align}\label{holder}
	|\sigma(\xi)-\sigma(\zeta)|\,\le\,C|\xi-\zeta|^{\gamma}\,\,\,\,\forall\,\,\xi,\zeta\in\mathbb{R},\,|\xi-\zeta|\,\textless\,1.
\end{align}
\textbf{Stochastic term:} Let $(\Omega,\mathcal{F},\mathbb{P},({\mathcal{F}}_t),(w_k(t))_{k\ge1})$ be a stochastic basis with a complete, right continuous filtration. Let $\mathcal{P}_T$ indicate the predictable $\sigma$-algebra on $\Omega\times[0,T]$ associated to $(\mathcal{F}_t)_{t\ge0}$. The initial data is random variable i.e. $\mathcal{F}_0$-measurable and we assume $u_0\in L^p(\Omega;L^p(\mathbb{T}^N))$ for all $p\in[1,+\infty)$. In this setting we can assume without loss of generality that the $\sigma$-algebra $\mathcal{F}$ is countably generated and $(\mathcal{F}_t)_{t\ge0}$ is the filtration generated by the Wiener process $B$ and $u_0$. We define the canonical space $\mathfrak{X}\subset\mathfrak{X}_0$ via
$$\mathfrak{X}_0=\bigg\{v=\sum_{k\ge1}\lambda_k \gamma_k;\, \sum_{k\ge1}\frac{\lambda_k^2}{k^2}\,\textless\,\infty\bigg\}$$
endowed with the norm 
$$\| v\|_{\mathfrak{X}_0}^2=\sum_{k\ge1}\frac{\lambda_k^2}{k^2},\,\,\, v=\sum_{k\ge1}\lambda_k \gamma_k .$$
Remark that the embedding $\mathfrak{X}\hookrightarrow \mathfrak{X}_0$ is Hilbert-Schmidt. Moreover, $\mathbb{P}$- a.s. trajectories of $B$ are in $C([0,T];\mathfrak{X}_0)$. For each $u\in L^2(\mathbb{T}^N)$ we consider a mapping $\Phi: \mathfrak{X}\to L^2(\mathbb{T}^N)$ defined by $\Phi(u)\gamma_k = \beta_k(\cdot, u(\cdot))$. Thus we define
$$\Phi(x,u)=\sum_{k\ge1} \beta_k(x,u)\gamma_k ,$$
the action of $\phi(x,u)$ on $\gamma\in \mathfrak{X}$ being given by ${\langle \phi(x,u),\gamma \rangle}_\mathfrak{X}$. We assume $\beta_k\in C(\mathbb{T}^N \times\mathbb{R})$, with the bounds
\begin{align}\label{2.2}
	\beta^2(x,u)=\sum_{k\ge1}|\beta_k(x,u)|^2 \le D_0 (1+|u|^2), 
\end{align}
\begin{align}\label{2.3} \sum_{k\ge1}|\beta_k(x,u)-\beta_k(y,v)|^2=D_1(|x-y|^2+|u-v|h(|u-v|)),
\end{align}
where $x, y\in\mathbb{T}^N$, $u, v\in\mathbb{R}$ and $h$ is a non-decreasing continuous function on $\mathbb{R}_+$  satisfying, h(0)=0 and $0\le h(z)\le1$ for all $z\in\mathbb{R}_+$. 
\noindent
Assumption \eqref{2.2} imply that $ \Phi : L^2(\mathbb{T}^N )\to L_2(\mathfrak{X}; L^2(\mathbb{T}^N ))$,
where $L_2(\mathfrak{X};L^2(\mathbb{T}^N))$ refer to the collection of Hilbert-Schmidt operators from $\mathfrak{X}$ to $L^2(\mathbb{T}^N )$. Thus, for any predictable process $u \in L^2(\Omega; L^2(0, T ; L^2(\mathbb{T}^N )))$, the stochastic integral $t\to\int_0^t \Phi(u) dB$ is a well defined process taking values in $L^2(\mathbb{T}^N)$ (see \cite{prato}  for detailed construction).

\noindent
\textbf{Fractional term:} $g^\lambda$ is a fractional laplace operator $(-\triangle)^{\lambda}$ for $\lambda\in(0,1)$ properly defined at least on $C^\infty(\mathbb{T}^N)$ by 
\begin{align}\label{2.4}
	g_x^{\lambda}[\phi](x):=-P.V.\int_{\mathbb{R}^N}(\phi(x+z)-\phi(x))\mu(z)\,dz\,\,\, \forall\,\,x\in\mathbb{T}^N
\end{align}
where $\mu(z)=\frac{1}{|z|^{N+2\lambda}}\,\, z\,\ne\,0$ and $\mu(0)=0$ (see \cite{Roncal, Barriosa} for details).

\noindent
\subsection{Assumptions for continuous dependence estimate}
We are also interested to develop a general framework for the continuous dependence estimate. Our aim is to establish continuous dependence on the fractional exponent $\lambda$ and on the non-linearities, that is, on the flux function and noise coefficients. To achieve that, we proceed as follows. Consider the pair of the nonlinear equations:
\begin{align}
	\label{E.1}
	\begin{cases} 
		d u(x,t)+\mbox{div}(F(u(x,t)))d t +g_x^\lambda[u(x,t)]dt=\mbox{div}\big(A(u)\nabla u\big)dt+\Phi(u(x,t))\,dB(t)& x \in \mathbb{T}^N, \,t \in(0,T) \vspace{0.1cm}\\
		u(0,x) = u_0(x), & x  \in \mathbb{T}^N
	\end{cases}
\end{align}
\begin{align}\label{E.2}
	\begin{cases} 
		d v(x,t)+\mbox{div}(G(v(x,t)))d t +g_x^\beta[v(x,t)]dt=\mbox{div}\big(B(v)\nabla v\big)dt+\Psi(v(x,t))\,dB(t)& x \in \mathbb{T}^N, \,t \in(0,T) \vspace{0.1cm}\\
		v(0,x) = v_0(x), & x  \in \mathbb{T}^N
	\end{cases}
\end{align}
For continuous dependence estimate, in addition, we are assuming following assumptions on terms in above equations. For all $\xi, \zeta\, \in \mathbb{R}$,
\begin{align}
	\sup_{|\zeta|\,\le\,\delta}|F'(\xi)-F'(\xi+\zeta)|\,&\le\,C(1+|\xi|^{p_*-1})\delta^{\lambda_{F_1}},\label{flux1}\\
	\sup_{|\zeta|\,\le\,\delta}|G'(\xi)-G'(\xi+\zeta)|\,&\le\,C(1+|\xi|^{p^*-1})\delta^{\lambda_{G_1}},\label{flux2}\\
	|\Phi(\xi)-\Phi(\zeta)|^2\,&\le|\xi-\zeta|^{\lambda_{F_2}+1},\label{noise1}\\
|\Psi(\xi)-\Psi(\zeta)|^2\,&\le|\xi-\zeta|^{\lambda_{G_2}+1},\label{noise2}
\end{align}
\begin{align}
	\|F'-G'\|_{L^\infty},\|\Phi-\Psi\|_{L^\infty}\,\textless\,\infty,\label{mixed}
\end{align}
\begin{align}
	u_0, \, v_0 \in L^p(\Omega\times\mathbb{T}^N)\cap L^1(\Omega;BV(\mathbb{T}^N))
\end{align}
where $\lambda_{F_1}, \lambda_{F_2}, \lambda_{G_1}$ and $\lambda_{G_2}$ are positive constant. We still assume that $\Phi$, $\Psi$ have at most linear growth \eqref{2.2}.
The diffusion matries
$$A=\big(A_{ij}\big)_{i,j=1}^N, B=(b_{ij})_{i,j=1}^N:\mathbb{R}\to\mathbb{R}^{N\times N}$$
are symmetric and positive semidefinite. Its square-root matries, which are also symmetric and positive semidefinite, is denoted by $\sigma$, $\tau$ respectively. We assume that $\sigma$, $\tau$ is bounded and locally $\gamma_a$, $\gamma_b$-H\"older continuous for some $\gamma_a, \gamma_b\,\textgreater\,\frac{1}{2}$ respectively, i.e.
\begin{align}
	|\sigma(\xi)-\sigma(\zeta)|\,&\le\,C|\xi-\zeta|^{\gamma_a}\,\,\,\,\forall\,\,\xi,\zeta\in\mathbb{R}\,|\xi-\zeta|\,\textless\,1,\\
	|\tau(\xi)-\tau(\zeta)|\,&\le\,C|\xi-\zeta|^{\gamma_b}\,\,\,\,\forall\,\,\xi,\zeta\in\mathbb{R}\,|\xi-\zeta|\,\textless\,1.\label{last}
\end{align}
\subsection{Preliminary results on Young measure}
Here we state some results regarding Young measure theory.  we refer to \cite[Section 2]{sylvain} for proof of results.
\begin{definition}\textbf{(Young measure)} Let $(\mathcal{O}, \mathbb{F}, \lambda_1)$ be a finite measure space. A mapping $\mathcal{V}$ from $\mathcal{O}$ to $\mathcal{P}(\mathbb{R})$, the set of probability measures on $\mathbb{R}$, is said to be a Young measure if, for all $ h \in C_b(\mathbb{R})$, the map $y\to\mathcal{V}_y(h)$ from $\mathcal{O}$ into $\mathbb{R}$ is $\mathbb{F}$-measurable. We say that a Young measure $\mathcal{V}$ vanishes at infinity if for all $q\ge1$,
	$$\int_{\mathcal{O}} \int_{\mathbb{R}}|\zeta|^q d\mathcal{V}_y(\zeta)d\lambda_1(y)\,\textless\, \infty.$$
\end{definition}
\begin{definition}\textbf{(Kinetic function)} Let $(\mathcal{O}, \mathbb{F}, \lambda_1)$ be finite measure space. A measurable function $f:\mathcal{O}\times\mathbb{R}\to[0,1]$ is said to be a kinetic function, if there exists a Young measure $\mathcal{V}$ on $\mathcal{O}$ vanishing at infinity such that, for $\lambda_1$-a.e. $y\in \mathcal{O}$, for all $\xi\in\mathbb{R}$,
	$$f(y,\zeta)=\mathcal{V}_y(\zeta,\infty).$$
\end{definition}
\noindent
\begin{definition}[\textbf{Equilibrium}] 
	We say that $f$ is an equilibrium, if there exists a measurable function $u:\mathcal{O}\to\mathbb{R}$ such that $f(y,\zeta)=\mathbbm{1}_{u(y)\textgreater\zeta}$ almost everywhere, or, equivalently, $\mathcal{V}_y=\delta_{\zeta=u(y)}$ for almost every $y\in \mathcal{O}$.
\end{definition}
\begin{thm}[\textbf{Compactness of Young measures}]\label{th2.6} Let $(\mathcal{O},\mathbb{F},\lambda_1)$ be a finite measure space such that sigma algebra $\mathbb{F}$ is countably generated. Let $(\mathcal{V}^n)$ be a sequence of Young measures on $\mathcal{O}$ satisfying uniformly for some $q\ge1$,
	\begin{align}\label{2.11}
		\sup_n\int_{\mathcal{O}}\int_\mathbb{R}|\zeta|^q\,d\mathcal{V}_z^n(\zeta)\,d\lambda_1(z)\,\textless\,+\infty.
	\end{align}
	Then there exists a Young measure $\mathcal{V}$ on $\mathcal{O}$ and a subsequence still denoted $(\mathcal{V}^n)$ such that, for all $h\in L^1(\mathcal{O})$, for all $g\in C_b(\mathbb{R})$,
	\begin{align}\label{2.12}
		\lim_{n\to +\infty}\int_{\mathbb{O}} h(y)\int_{\mathbb{R}}g(\zeta)d\mathcal{V}_z^n(\zeta)d\lambda_1(y)&=\int_{\mathcal{O}} h(y)\int_{\mathbb{R}}g(\zeta)d\mathcal{V}_y(\zeta)d\lambda_1(y).
	\end{align}
\end{thm}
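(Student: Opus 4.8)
The plan is to realize each Young measure as an element of the dual space $\big(L^1(\mathcal{O}; C_0(\mathbb{R}))\big)^{*} = L^{\infty}_{w^{*}}(\mathcal{O}; \mathcal{M}_b(\mathbb{R}))$, the space of weak-$*$ measurable, essentially bounded maps from $\mathcal{O}$ into the finite Radon measures $\mathcal{M}_b(\mathbb{R})$, and then to extract a weak-$*$ convergent subsequence. Since $\mathbb{F}$ is countably generated and $\lambda_1$ is finite, $L^1(\mathcal{O})$ is separable, and as $C_0(\mathbb{R})$ is separable the predual $L^1(\mathcal{O}; C_0(\mathbb{R}))$ is separable. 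Each $\mathcal{V}^n$ acts on a test element $\Psi$ by $\Psi \mapsto \int_{\mathcal{O}} \int_{\mathbb{R}} \Psi(y,\zeta)\, d\mathcal{V}^n_y(\zeta)\, d\lambda_1(y)$, and since $\mathcal{V}^n_y$ is a probability measure for a.e. $y$, the associated functional has norm at most $\lambda_1(\mathcal{O})$. By the sequential Banach--Alaoglu theorem (applicable thanks to separability of the predual), I would pass to a subsequence, still denoted $(\mathcal{V}^n)$, with a limit $\mathcal{V} \in L^{\infty}_{w^{*}}(\mathcal{O}; \mathcal{M}_b(\mathbb{R}))$ for which \eqref{2.12} holds for every $h \in L^1(\mathcal{O})$ and every $g \in C_0(\mathbb{R})$.

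It then remains to verify that $\mathcal{V}$ is a genuine Young measure. Testing against nonnegative $h \ge 0$ and $g \ge 0$ and passing to the limit shows $\mathcal{V}_y \ge 0$ for $\lambda_1$-a.e. $y$, so $\mathcal{V}_y$ is a nonnegative measure with $\mathcal{V}_y(\mathbb{R}) \le 1$. The decisive point is that no mass is lost at infinity, and this is exactly where the uniform moment bound \eqref{2.11} enters: by Chebyshev's inequality,
$$\int_{\mathcal{O}} \mathcal{V}^n_y(\{|\zeta| > R\})\, d\lambda_1(y) \le \frac{1}{R^q} \sup_n \int_{\mathcal{O}}\int_{\mathbb{R}} |\zeta|^q\, d\mathcal{V}^n_y(\zeta)\, d\lambda_1(y) \le \frac{C}{R^q},$$
so $(\mathcal{V}^n)$ is uniformly tight in the averaged sense. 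Choosing a cutoff $\chi_R \in C_c(\mathbb{R})$ with $0 \le \chi_R \le 1$ and $\chi_R \equiv 1$ on $[-R,R]$, applying the $C_0$-convergence to $g = \chi_R$ with $h \equiv 1$, and letting $R \to \infty$ yields $\int_{\mathcal{O}} \mathcal{V}_y(\mathbb{R})\, d\lambda_1(y) = \lambda_1(\mathcal{O})$; combined with $\mathcal{V}_y(\mathbb{R}) \le 1$ this forces $\mathcal{V}_y(\mathbb{R}) = 1$ for a.e. $y$, so $\mathcal{V}$ is a Young measure.

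Finally, to pass from $g \in C_0(\mathbb{R})$ to a general $g \in C_b(\mathbb{R})$, I would split $g = g\chi_R + g(1 - \chi_R)$. The part $g\chi_R \in C_c(\mathbb{R}) \subset C_0(\mathbb{R})$ is covered by the convergence already established, while the tail $g(1 - \chi_R)$ is controlled uniformly in $n$ by $\|g\|_{\infty}$ times the tightness estimate above, and likewise for the limiting measure; sending $R \to \infty$ closes the argument and gives \eqref{2.12} for all $g \in C_b(\mathbb{R})$. The main obstacle throughout is the preservation of total mass under the limit — guaranteeing that the weak-$*$ limit is a probability measure rather than a sub-probability — which is precisely dictated by the moment hypothesis \eqref{2.11}; the attendant measurability issues (transferring integrated inequalities into $\lambda_1$-a.e. pointwise statements about $\mathcal{V}_y$) are handled by testing against countable dense families of $h$ in $L^1(\mathcal{O})$ and $g$ in $C_0(\mathbb{R})$.
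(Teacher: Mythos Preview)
The paper does not supply a proof of this statement; it merely cites \cite[Section 2]{sylvain} for the argument. Your proposal is correct and is precisely the standard route taken in that reference: embed Young measures into $L^{\infty}_{w^{*}}(\mathcal{O};\mathcal{M}_b(\mathbb{R}))=\big(L^1(\mathcal{O};C_0(\mathbb{R}))\big)^*$, use separability of the predual (guaranteed by $\mathbb{F}$ countably generated and $C_0(\mathbb{R})$ separable) to invoke sequential Banach--Alaoglu, then use the moment bound \eqref{2.11} for tightness to upgrade the sub-probability limit to a genuine Young measure and to extend the test class from $C_0$ to $C_b$.
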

\begin{cor}[\textbf{Compactness of kinetic functions}]\label{th2.7}Let $(\mathcal{O},\mathbb{F},\lambda_1)$ be a finite measure space such that $\mathbb{F}$ is countably generated. Let $(f_n)$ be a sequence of kinetic functions on $\mathcal{O}\times\mathbb{R}:f_n(y,\zeta)=\mathcal{V}_y^n(\zeta,+\infty)$ where $\mathcal{V}^n$ are Young measures on $\mathcal{O}$ satisfying for some $q\ge\,1$,
		\begin{align}\label{2.11}
		\sup_n\int_\mathcal{O}\int_\mathbb{R}|\zeta|^q\,d\mathcal{V}_y^n(\zeta)\,d\lambda_1(y)\,\textless\,+\infty.
	\end{align} Then there exists a kinetic function $f$ on $\mathcal{O}\times\mathbb{R}$ (related to the Young measure $\mathcal{V}$  by formula $f(y,\zeta)=\mathcal{V}_y(\zeta,+\infty)$) such that, up to a subsequence , $f_n\rightharpoonup f$ in $L^\infty(\mathcal{O}\times\mathbb{R})$ weak-*.
\end{cor}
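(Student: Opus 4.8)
The plan is to deduce the corollary directly from Theorem~\ref{th2.6} by encoding each kinetic function $f_n$ through its generating Young measure $\mathcal{V}^n$ and testing against a convenient dense class. First I would invoke Theorem~\ref{th2.6}: since the $\mathcal{V}^n$ satisfy the uniform moment bound \eqref{2.11}, there exist a subsequence (not relabelled) and a Young measure $\mathcal{V}$ on $\mathcal{O}$ for which \eqref{2.12} holds; the bound \eqref{2.11} passes to $\mathcal{V}$ by lower semicontinuity, so $\mathcal{V}$ vanishes at infinity. I then \emph{define} $f(y,\zeta):=\mathcal{V}_y(\zeta,+\infty)$, which is by construction a kinetic function associated with $\mathcal{V}$. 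It remains to establish $f_n\rightharpoonup f$ in $L^\infty(\mathcal{O}\times\mathbb{R})$ weak-$*$, i.e. to test against arbitrary $\varphi\in L^1(\mathcal{O}\times\mathbb{R})$ (the predual, the underlying measure $\lambda_1\otimes dx$ being $\sigma$-finite).

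The key is a Fubini identity. Writing $f_n(y,\zeta)=\mathcal{V}_y^n(\zeta,+\infty)=\int_{\mathbb{R}}\mathbbm{1}_{\eta>\zeta}\,d\mathcal{V}_y^n(\eta)$ and noting that $\lambda_1$ is finite and each $\mathcal{V}_y^n$ is a probability measure, the triple integrand is dominated by $|\varphi|$, whose integral is $\|\varphi\|_{L^1}<\infty$; hence Fubini gives
\begin{align*}
\int_{\mathcal{O}}\int_{\mathbb{R}} f_n(y,\zeta)\,\varphi(y,\zeta)\,d\zeta\,d\lambda_1(y)
=\int_{\mathcal{O}}\int_{\mathbb{R}} G_\varphi(y,\eta)\,d\mathcal{V}_y^n(\eta)\,d\lambda_1(y),
\qquad G_\varphi(y,\eta):=\int_{-\infty}^{\eta}\varphi(y,\zeta)\,d\zeta,
\end{align*}
and the same identity holds for $f$ and $\mathcal{V}$. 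Thus the desired weak-$*$ convergence is equivalent to convergence of the right-hand sides against $\mathcal{V}^n$.

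I would verify this first on tensor products $\varphi(y,\zeta)=h(y)\psi(\zeta)$ with $h\in L^1(\mathcal{O})$ and $\psi\in C_c(\mathbb{R})$. For such $\varphi$ one has $G_\varphi(y,\eta)=h(y)\,g(\eta)$ with $g(\eta)=\int_{-\infty}^{\eta}\psi$, and $g\in C_b(\mathbb{R})$ precisely because $\psi$ is compactly supported (so $g$ is continuous and eventually constant). Therefore \eqref{2.12} applies verbatim with this $h$ and $g$, giving $\int f_n\varphi\to\int f\varphi$ for every such $\varphi$. Finally I would upgrade to arbitrary $\varphi\in L^1(\mathcal{O}\times\mathbb{R})$ by density: linear combinations of these tensor products are dense in $L^1(\mathcal{O}\times\mathbb{R})$ (as $\lambda_1$ is finite and $C_c(\mathbb{R})$ is dense in $L^1(\mathbb{R})$), and every kinetic function obeys the uniform bound $\|f_n\|_{L^\infty}\le 1$, $\|f\|_{L^\infty}\le 1$. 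Splitting
\begin{align*}
\Big|\int (f_n-f)\,\varphi\Big|\le \Big|\int (f_n-f)\,\varphi_\varepsilon\Big|+\|f_n-f\|_{L^\infty}\,\|\varphi-\varphi_\varepsilon\|_{L^1},
\end{align*}
and letting $n\to\infty$ then $\varepsilon\to 0$ closes the argument.

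The main obstacle is the reduction in the third step: the test functions for weak-$*$ convergence in $L^\infty$ are merely $L^1$ and not of product form, whereas Theorem~\ref{th2.6} only supplies convergence against $h\otimes g$ with $g\in C_b(\mathbb{R})$. The passage to the primitive $G_\varphi$, together with the choice $\psi\in C_c(\mathbb{R})$, is exactly what converts the Heaviside structure of $f_n$ into an admissible bounded continuous function of $\eta$; once this tensor-product case is secured, the uniform $L^\infty$-bound makes the density step routine.
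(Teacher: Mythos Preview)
The paper does not supply its own proof of this corollary: it is listed among the preliminary Young-measure results in Section~\ref{Sec2} with the blanket remark ``we refer to \cite[Section 2]{sylvain} for proof of results.'' Your argument is correct and is essentially the standard one given in that reference: extract a limiting Young measure via Theorem~\ref{th2.6}, rewrite the pairing $\int f_n\varphi$ through the primitive $G_\varphi$ so that tensor tests $h\otimes\psi$ with $\psi\in C_c(\mathbb{R})$ become admissible for \eqref{2.12}, and close by density using the uniform bound $\|f_n\|_{L^\infty}\le 1$.
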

\begin{lem}[\textbf{Convergence to an equilibrium}]\label{th2.8}Let $(\mathcal{O},\mathbb{F},\lambda_1)$ be a finite measure space, Let $q\,\textgreater\,1$. Let $(f_n)$ be a sequence of kinetic functions on $\mathcal{O}\times\mathbb{R}$: $f_n(y,\zeta)=\mathcal{V}_y^n(\zeta,+\infty)$ where $\mathcal{V}^n$ are Young measures on $\mathcal{O}$ satisfying for some $q\textgreater\,1$,
	 	\begin{align}\label{2.11}
		\sup_n\int_\mathcal{O}\int_\mathbb{R}|\zeta|^q\,d\mathcal{V}_y^n(\zeta)\,d\lambda_1(y)\,\textless\,+\infty.
	\end{align}
Let $f$ be a kinetic function on $\mathcal{O}\times\mathbb{R}$ such that $f_n\rightharpoonup f$ in $L^\infty (\mathcal{O}\times\mathbb{R})$ weak-*. Assuming that $f$ is an equilibrium, $f(y,\zeta)=\mathbbm{1}_{u(y)\textgreater\zeta}$, and letting
	$$u_n(y)=\int_{\mathbb{R}}\zeta d\mathcal{V}_y^n(\zeta)$$ then, for all $1\le p\,\textless\, q$, $u_n\to u $ in $L^p(\mathcal{O})$.
\end{lem}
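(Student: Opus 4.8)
The plan is to reduce the statement to an $L^1(\mathcal{O})$ convergence and then upgrade it by interpolation against the uniform $q$-th moment bound. First I would record that Jensen's inequality applied to the probability measures $\mathcal{V}^n_y$ turns the moment bound \eqref{2.11} into a uniform bound $\sup_n\|u_n\|_{L^q(\mathcal{O})}^q\le \sup_n\int_{\mathcal{O}}\int_{\mathbb{R}}|\zeta|^q\,d\mathcal{V}^n_y(\zeta)\,d\lambda_1(y)=:C<\infty$, and (once the $L^1$ statement is in hand, via an a.e.\ convergent subsequence and Fatou) the same bound for $u$, so that $u,u_n\in L^q(\mathcal{O})$. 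The heart of the matter is to show $u_n\to u$ in $L^1(\mathcal{O})$; granting this, the log-convexity of the $L^p$-norms gives, for $1\le p<q$, the estimate $\|u_n-u\|_{L^p}\le\|u_n-u\|_{L^1}^{1-\theta}\|u_n-u\|_{L^q}^{\theta}$ with $\frac1p=(1-\theta)+\frac{\theta}{q}$ and $\theta\in[0,1)$, and since $\|u_n-u\|_{L^q}\le 2C^{1/q}$ stays bounded while $\|u_n-u\|_{L^1}\to 0$ and $1-\theta>0$, the desired $L^p$ convergence follows.

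The key algebraic input I would exploit is the representation of the barycenter through the kinetic function. Writing $u_n(y)=\int_0^\infty f_n(y,\zeta)\,d\zeta-\int_{-\infty}^0\bigl(1-f_n(y,\zeta)\bigr)\,d\zeta$, and the analogue for the equilibrium $f(y,\zeta)=\mathbbm{1}_{u(y)>\zeta}$, I obtain the clean identity $u_n(y)-u(y)=\int_{\mathbb{R}}\bigl(f_n(y,\zeta)-f(y,\zeta)\bigr)\,d\zeta$, the integral converging because $f_n-f\to 0$ as $\zeta\to\pm\infty$. Because $f$ is an equilibrium, $f(y,\cdot)$ is the indicator $\mathbbm{1}_{\zeta<u(y)}$, so $f_n-f\le 0$ for $\zeta<u(y)$ and $f_n-f\ge 0$ for $\zeta>u(y)$; equivalently $|f_n-f|=(f_n-f)\,\sgn(\zeta-u(y))$ pointwise. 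Hence $|u_n(y)-u(y)|\le\int_{\mathbb{R}}(f_n-f)\,\sgn(\zeta-u(y))\,d\zeta$. This \emph{linearization of the absolute value}, valid precisely because the limit is an equilibrium, is what converts the available weak-$*$ information into strong convergence.

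Finally I would split $\int_{\mathbb{R}}=\int_{|\zeta|\le M}+\int_{|\zeta|>M}$ and integrate in $y$. On the tail, monotonicity of $\zeta\mapsto\mathcal{V}^n_y(\zeta,\infty)$ together with $\int_M^\infty\mathcal{V}^n_y(\zeta,\infty)\,d\zeta=\int_{\mathbb{R}}(\xi-M)^+\,d\mathcal{V}^n_y(\xi)\le M^{1-q}\int_{\mathbb{R}}|\xi|^q\,d\mathcal{V}^n_y(\xi)$ and its symmetric counterpart at $-M$ give $\int_{\mathcal{O}}\int_{|\zeta|>M}|f_n-f|\,d\zeta\,d\lambda_1\le 4CM^{1-q}$, uniformly in $n$. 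On the bounded part, the function $\psi_M(y,\zeta)=\mathbbm{1}_{|\zeta|\le M}\,\sgn(\zeta-u(y))$ lies in $L^1(\mathcal{O}\times\mathbb{R})$ (finite measure in $y$, bounded range in $\zeta$), so the assumed weak-$*$ convergence $f_n\rightharpoonup f$ in $L^\infty(\mathcal{O}\times\mathbb{R})$ applies and yields $\int_{\mathcal{O}}\int_{|\zeta|\le M}(f_n-f)\,\sgn(\zeta-u)\,d\zeta\,d\lambda_1\to 0$ for each fixed $M$. Taking $\limsup_n$ and then letting $M\to\infty$ forces $\|u_n-u\|_{L^1(\mathcal{O})}\to 0$.

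The main obstacle, and the only genuinely new ingredient beyond the compactness theorem, is the passage from weak-$*$ to strong convergence; it is resolved entirely by the equilibrium sign identity, which lets one replace $|u_n-u|$ by a quantity linear in $f_n-f$ that is directly accessible to the weak-$*$ limit. The remaining care is the uniform control of the $\zeta$-tails, and this is exactly where the strict inequality $q>1$ is used twice: once for the Chebyshev-type tail bound $M^{1-q}\to 0$, and once to secure the interpolation exponent $\theta<1$.
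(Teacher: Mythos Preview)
The paper does not give its own proof of this lemma; it simply refers to \cite[Section~2]{sylvain} for all the Young-measure preliminaries. Your argument is correct and is, in fact, essentially the argument one finds in Dotti--Vovelle: the decisive step is the sign identity $|f_n-f|=(f_n-f)\,\sgn(\zeta-u(y))$, which holds precisely because the limit $f$ is an equilibrium, and which linearizes $\int_{\mathcal O}\int_{\mathbb R}|f_n-f|$ so that the weak-$*$ hypothesis can be applied directly. The truncation in $\zeta$ at level $M$, the Chebyshev-type tail bound using $q>1$, and the final interpolation to reach $L^p$ for $p<q$ are all standard and correctly executed.

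One small remark: you do not need the slightly circular ``a.e.\ subsequence plus Fatou'' step to get the tail bound for $f$. By hypothesis $f$ is a kinetic function, so its Young measure $\mathcal V$ vanishes at infinity; since $f$ is an equilibrium this means $u\in L^q(\mathcal O)$ outright, and the bound $\int_{\mathcal O}\int_{|\zeta|>M}|f|\,d\zeta\,d\lambda_1\le C\,M^{1-q}$ follows immediately. Alternatively, lower semicontinuity of the $q$-th moment under the convergence in Theorem~\ref{th2.6} transfers the uniform bound \eqref{2.11} to the limit measure.
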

\subsection{Sobolev space} Let us denote sobolev space $H^{\lambda}(\mathbb{T}^N)$, for $\lambda\in\mathbb{R}^+$, as the subspace of $L^2(\mathbb{T}^N)$ for which the norm
$$\|u\|_{H^{\lambda(\mathbb{T}^N)}}^2=\sum_{n\in\mathbb{Z}^N}\big(1+|n|^2\big)^{\lambda}|\hat{u}(n)|^2$$
is finite. Here $\hat{u}(n)$ denotes the Fourier coefficient.
\section{Definitions and main results} 
\label{3}
Here, we introduce the kinetic formulation to \eqref{1.1} as well as the basic definitions concerning the notion of kinetic solution.
\subsection{Random kinetic measure, kinetic solution}
\begin{definition}$\textbf{(Kinetic\,\, measure).}$\label{kinetic measure}
	A mapping ${m}$ from $\Omega$ to $\mathcal{M}^+(\mathbb{T}^N\times[0,T]\times\mathbb{R})$, the set of non negative measures over $\mathbb{T}^N\times[0,T]\times\mathbb{R}$, is said to be kinetic measure provided the following holds:
	\begin{enumerate}
		\item[(i)] $m$ is measurable in the following sense: for each $h\in C_0(\mathbb{T}^N\times[0,T]\times\mathbb{R})$ the mapping $m(h):\Omega\to\mathbb{R}$ is measurable,
		\item[(ii)] $m$ vanishes for large $\zeta$: if $\mathcal{B}_{R}^c=\{\zeta\in\mathbb{R}; |\zeta|\ge R\}$, then 
		$$\lim_{R\to\infty}\mathbb{E} m(\mathbb{T}^N\times[0,T]\times \mathcal{B}_{R}^c)=0.$$
	\end{enumerate}	
\end{definition}
\begin{definition}\label{kinetic solution}
	$\textbf{(Kinetic\,\, solution).}$ A $L^1(\mathbb{T}^N)$- valued stochastic process $(u(t))_{t\in[0,T]}$ is said to be a solution to \eqref{1.1} with initial datum $u_0$, if $(u(t))_{t\in[0,T]}$ and
	$f(t):=\mathbbm{1}_{u(t)\textgreater\zeta}$ have the following properties:
	\begin{enumerate}
		
		\item[1.] $u\in L_{\mathcal{P}_T}^p(\mathbb{T}^N\times[0,T]\times\Omega)\cap L^2(\Omega;L^2([0,T];H^{\lambda}(\T^N))),\,\,\,\,\forall\,\, p\in[1,+\infty)$ ,
		\item[2.] for all $\varphi\in C_c^2(\mathbb{T}^N\times\mathbb{R}),$ $\p$-almost surely, $t\to \langle f(t),\varphi\rangle$ is c\'adl\'ag,
		\item[3.]for all $p\in[1,+\infty),$ there exists $C_p\ge0$ such that 
		\begin{align}\label{2.5}
			\mathbb{E}(\sup_{0\le t\le T}\| u(t)\|_{L^p(\mathbb{T}^N)}^p)\le C_p,
		\end{align}
		\item [4.] $\mbox{div}\int_0^u\sigma(\zeta)d\zeta\,\in\,L^2(\Omega\times\,[0,T]\times\mathbb{T}^N),$
		\item [5.] for any $\varphi\in C_b(\mathbb{R})$ the following chain rule formula holds true: $\p$-almost surely,
		\begin{align}\label{chain}
			\mbox{div}\int_0^u \varphi(\zeta) \sigma(\zeta) d\zeta=\varphi(u)\mbox{div}\int_0^u\sigma(\zeta) d\zeta\,\qquad\,\text{in}\,\,\mathcal{D}'(\mathbb{T}^N)\,\,\text{a.e.}\, t\in [0,T],
		\end{align}
		\item[6.] Let $\eta_{1},\,\, \eta_2:\Omega\to\,\mathcal{M}^{+}\big(\mathbb{T}^N\times[0,T]\times\mathbb{R}\big)$ be defined as follows:  
		$$\eta_1(x,t,\xi)=\int_{\mathbb{R}^N}|u(x+z,t)-\zeta|\mathbbm{1}_{Conv\{u(x,t),u(x+z,t)\}}(\zeta)\mu(z)dz,$$
		and $$\eta_2(x,t,\zeta)=\big|\mbox{div}\int_0^u \sigma(\zeta)d\zeta\big|^2 \delta_{u(x,t)}(\zeta).$$
		There exists a random kinetic measure $m$ such that $\p$-a.s., $m\ge\eta_1+\eta_2$, and  for all $\varphi \in C_c^2(\mathbb{T}^N\times\mathbb{R})$, $t\in[0,T]$,
		\begin{align}\label{2.6}
			\langle f(t),\varphi \rangle &= \langle f_0, \varphi \rangle + \int_0^t\langle f(s),F'(\zeta)\cdot\nabla\varphi\rangle ds+\int_0^t\langle f(s),A(\zeta):D^2\varphi\rangle ds -\int_0^t\langle f(s), g_x^\lambda[\varphi]\rangle ds\notag\\
			&\qquad+\sum_{k=1}^\infty \int_0^t \int_{\mathbb{T}^N} \beta_k(x,u(x,s)\varphi(x,u(x,s) dx dw_k(s)\notag\\
			&\qquad+\frac{1}{2}\int_0^t\int_{\mathbb{T}^N}\partial_{\zeta}\varphi(x,u(x,s)) \beta^2 (x,u(x,s))dxds -m(\partial_{\zeta}\varphi)([0,t])
		\end{align}
		$\mathbb{P}$-almost surely,
		where
		$f_0(x,\zeta)=\mathbbm{1}_{u_0\textgreater\zeta}$,  $\beta^2 := \sum_{k\ge1} |\beta_k|^2$. 
	\end{enumerate}
\end{definition}
\noindent
Here we have used the brackets $\langle.,.\rangle$, to indicate the duality between $C_c^\infty(\mathbb{T}^N \times\mathbb{R})$ and the space of distributions over $\mathbb{T}^N\times\mathbb{R}$. We have used  the shorthand $m(\varphi)$ to indicate the Borel measure on $[0,T]$ defined by
$$m(\varphi):A\mapsto\int_{\mathbb{T}^N\times A\times \mathbb{R}}\varphi(x,\xi)dm(x,t,\zeta),\,\,\, \varphi \in C_b(\mathbb{T}^N\times\mathbb{R})$$
for all A Borel subset of $[0,T]$, and 
$$\text{Conv}\{a, b\} := (\text{min}\{a, b\},\text{max}\{a, b\}).$$
We have used the notation $A:B=\sum_{i,j}a_{ij}b_{ij}$ for two matries $A=(a_{ij})$, $B=(b_{ij})$ of the same size.
\subsection{The main results}In this subsection, we record the statements of main results. We have c\'adl\'ag condition  in formulation of kinetic solutions which ensure that almost surely, tracjectories of soluton $u$ are continuous in $L^p(\mathbb{T}^N)$. We have two results as follows.
\begin{thm}[\textbf{Existence and uniqueness}]\label{th2.10}\label{main theorem 1}
	Under the assumptions \eqref{F.1}-\eqref{2.3}, there exists a unique kinetic solution $(u(t))_{t\in[0,T]}$ to \eqref{1.1} which has $\p$-almost surely continuous trajectories in $L^p(\mathbb{T}^N)$, for all $p\in[1,+\infty)$. Moreover, if $(u_1(t))_{t\in[0,T]}, (u_2(t))_{t\in[0,T]}$ are kinetic solutions to \eqref{1.1} with initial data $u_{1,0}$ and $u_{2,0}$, respectively, then for all $t\in[0,T]$,
	\begin{align}\label{contraction}
		\mathbb{E}\left\Vert u_1(t)-u_2(t)\right\Vert_{L^1(\mathbb{T}^N)}\le\mathbb{E}\left\Vert u_{1,0}-u_{2,0}\right\Vert_{L^1(\mathbb{T}^N)}.
	\end{align}
\end{thm}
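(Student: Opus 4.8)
The statement bundles three assertions --- uniqueness, existence, and continuity of trajectories --- and I would treat them in that order, since the $L^1$ contraction \eqref{contraction} is simultaneously the uniqueness statement and the engine of the existence proof. For uniqueness I would establish \eqref{contraction} directly by a Kruzhkov doubling of variables adapted to the kinetic formulation \eqref{2.6}. Writing $f_i(t)=\mathbbm{1}_{u_i(t)>\zeta}$ and $\bar f_i=1-f_i$ for the two solutions, the starting point is the identity $\int_{\mathbb{R}} f_1(x,\zeta)\bar f_2(x,\zeta)\,d\zeta=(u_1-u_2)^+$, so that $\mathbb{E}\|u_1(t)-u_2(t)\|_{L^1(\mathbb{T}^N)}$ is recovered from $\mathbb{E}\int_{\mathbb{T}^N}\int_{\mathbb{R}}\big(f_1\bar f_2+f_2\bar f_1\big)\,d\zeta\,dx$. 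I would then test the two kinetic equations against a product $\varphi(x,y)\psi(\zeta,\eta)$ approximating the diagonals $x=y$ and $\zeta=\eta$, apply the It\^o product rule to $f_1(x,\zeta,t)\bar f_2(y,\eta,t)$, take expectations so that the martingale terms drop, and finally pass to the diagonal.

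The crux is to show that, in the diagonal limit, every surviving term is either non-positive or vanishes, leaving exactly \eqref{contraction}. The flux contribution is made to vanish by the regularity assumptions \eqref{F.1}--\eqref{F.2} on $F'$, once the regularisation in $(x,\zeta)$ is removed. The It\^o correction from the two stochastic integrals is controlled by the noise assumption \eqref{2.3}: this correction concentrates on $\{u_1=u_2\}$, where the coefficients $\beta_k$ coincide, and the modulus $h$ with $h(0)=0$ forces it to zero in the limit. The second-order (parabolic) term is handled as in the Chen--Perthame theory, using that each kinetic measure dominates its dissipation, $m_i\ge\eta_1^i+\eta_2^i$, together with the chain rule \eqref{chain}: the doubled parabolic term is absorbed by the part $m_i\ge\eta_2^i$ with a favourable sign.

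The fractional term is the main obstacle. Because $g_x^\lambda$ is nonlocal, the doubled contribution $-\langle f_1,g_x^\lambda\varphi\rangle$ does not localise to the diagonal, and one cannot integrate by parts as for a second-order operator. The plan is to rewrite this contribution through the singular kernel $\mu$ of \eqref{2.4} and to match it against the nonlocal dissipation measure $\eta_1$, whose definition precisely encodes $\int_{\mathbb{R}^N}|u(x+z)-\zeta|\,\mathbbm{1}_{\mathrm{Conv}\{u(x),u(x+z)\}}(\zeta)\,\mu(z)\,dz$. Bounding the off-diagonal nonlocal term above by the contribution of $\eta_1$ (modulo terms vanishing with the regularisation) is the technical heart of the comparison principle (Theorem~\ref{th3.5}) and is exactly where the choice $m\ge\eta_1+\eta_2$ in Definition~\ref{kinetic solution} is designed to pay off.

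For existence I would use vanishing viscosity: for $\varepsilon>0$ solve the non-degenerate approximation obtained by adding $\varepsilon\Delta u_\varepsilon\,dt$ to \eqref{1.1}, which is uniformly parabolic since $A+\varepsilon I$ is uniformly elliptic and $g_x^\lambda$ is a lower-order ($2\lambda<2$) perturbation. It\^o's formula applied to $\int_{\mathbb{T}^N}|u_\varepsilon|^p\,dx$, with \eqref{2.2} and the Burkholder--Davis--Gundy inequality, gives the uniform bound $\mathbb{E}\sup_t\|u_\varepsilon(t)\|_{L^p}^p\le C_p$, while the energy balance controls the $H^{\lambda}$-seminorm, the quantity $\mathrm{div}\int_0^{u_\varepsilon}\sigma\,d\zeta$ in $L^2$, and the total mass of the measures $m_\varepsilon$, all uniformly in $\varepsilon$. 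By Corollary~\ref{th2.7} the kinetic functions $f_\varepsilon$ converge weak-$*$ to a kinetic function $f$ with Young measure $\mathcal V$. As the authors stress, comparison is available only for genuine kinetic solutions, so the delicate point is that $\mathcal V$ need not a priori be a Dirac mass; I would rerun the doubling argument at the level of the limit object to force $f$ to be an equilibrium, whereupon Lemma~\ref{th2.8} upgrades weak convergence to strong convergence $u_\varepsilon\to u$ in $L^p$. Strong convergence then passes the nonlinear terms $F(u_\varepsilon)$, $A(u_\varepsilon)$, $\Phi(u_\varepsilon)$ to the limit and verifies every item of Definition~\ref{kinetic solution}. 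Lastly, the c\`adl\`ag property combined with \eqref{contraction} and the $L^p$ bounds rules out time-jumps --- the continuous martingale and drift parts have no jumps, and at equilibrium the measure $m$ charges no time-slice --- yielding $\mathbb{P}$-a.s. continuous trajectories in $L^p(\mathbb{T}^N)$ for every $p\in[1,\infty)$.
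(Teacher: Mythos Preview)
Your uniqueness argument is essentially the paper's: the doubling of variables via It\^o's product rule, the four contributions (flux, It\^o correction, second-order, fractional) and their respective controls all match Steps~1--4 of the proof of Theorem~\ref{th3.6}. Your identification of the fractional term as the technical heart, to be absorbed by the nonlocal dissipation $\eta_1$, is exactly right.

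The existence argument, however, contains a real gap --- one you half-recognise. You write that ``comparison is available only for genuine kinetic solutions'' and then propose to ``rerun the doubling argument at the level of the limit object to force $f$ to be an equilibrium''. These two statements are in tension: the comparison principle here (unlike in \cite{sylvain} for pure conservation laws) \emph{cannot} be run for a generalized kinetic function. Look at Step~2 of Theorem~\ref{th3.6}: the manipulation that makes $J\le 0$ rewrites the $\eta_{1,i}$ terms explicitly using $u_1,u_2$ and collapses them into an expression $\Phi_\delta(u_1-u_2)[\tau_z(u_1-u_2)-(u_1-u_2)]$, which has a sign because $\Phi_\delta$ is monotone. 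This only makes sense when $f_i=\mathbbm{1}_{u_i>\xi}$; for a non-Dirac Young measure there is no function $u$ to write down, and the measures $\eta_1,\eta_2$ in Definition~\ref{kinetic solution} are not even defined. The same obstruction applies to the second-order part (Step~3), which uses the chain rule~\eqref{chain}.

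The paper circumvents this by obtaining strong convergence \emph{before} passing to the limit: it applies the doubling argument to two viscous approximations $u^{\tau_n},u^{\tau_k}$ (both genuine kinetic solutions of perturbed equations, so the comparison is legitimate), picks up an extra error term $(\sqrt{\tau_n}-\sqrt{\tau_k})^2\epsilon^{-2}$, and concludes that $(u^{\tau_n})$ is Cauchy in $L^1(\Omega\times[0,T]\times\mathbb{T}^N)$. Strong convergence then gives a function $u$ from which $\eta_1,\eta_2$ can be defined, and Lemma~\ref{m} shows $m\ge\eta_1+\eta_2$ by lower semicontinuity. Only at the very end is the doubling argument re-used on the limit --- not to prove it is an equilibrium (that is already known a.e.\ $t$ from strong convergence) but to upgrade this to \emph{all} $t\in[0,T]$ via the c\`adl\`ag representative $f^+$. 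Your continuity sketch is in the right spirit but omits this last step: one shows separately that $f^{-}(t)$ is also an equilibrium (second part of Theorem~\ref{th3.6}), and then $f=f^{-}$ forces the atomic set of $m$ to be empty.
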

We also develop a general framework for the continuous dependence estimate of kinetic solutions. Note that the $L^1$- contraction  \eqref{contraction} gives the continuous dependence on the initial data. However, we intend to establish continuous dependence on the order of fractional Laplacian, the flux function, the diffusion matrix and the multiplicative noise present in equation \eqref{1.1}.
\begin{thm}[\textbf{Continuous dependence estimate}]\label{main theorem 2}
	Let assumptions \eqref{flux1}-\eqref{last} holds. Let $(u(t))_{t\in[0,T]}$ be a kinetic solution to \eqref{E.1} with initial data $u_0$, and let $(v(t))_{t\in[0,T]}$ be a kinetic solution to \eqref{E.2} with initial data $v_0$.   Then, the following continuous dependence estimate holds: for all $t\in[0,T]$,
	\begin{align*}
		\mathbb{E}\int_{\mathbb{T}^N}&|u(x,t)-v(x,t)|dx
		\le\,C_T\,\bigg(\mathbb{E}\bigg[\int_{\mathbb{T}^N}|v_0(x)-u_0(x)|dx\bigg]+\|F'-G'\|_{L^\infty(\mathbb{R})}\\&\qquad+\bigg(\|\Phi-\Psi\|_{L^\infty(\mathbb{R})}+\sqrt{\int_{|z|\,\le\,r_1}|z|^2 d|\mu_{\lambda}-\mu_{\beta}|(z)}+\|\sigma-\tau\|_{L^\infty(\mathbb{R})}\bigg)^{\min\big\{\frac{1}{2},\frac{\lambda_{G_1}}{2}, \lambda_{G_2}, \frac{\gamma_b}{2}\big\}}\\&\qquad+\int_{|z|\,\textgreater\,r_1}\mathbb{E}\big(\|u_0(\cdot+z)-u_0\|_{L^1(\mathbb{T}^N)}+\|v_0(\cdot+z)-v_0\|_{L^1(\mathbb{T}^N)}\big)d|\mu_\lambda-\mu_{\beta}|(z).
	\end{align*}
	where constant $C\,\textgreater\,0$ $(\text{depending on}\,\, T, u_0, v_0, G )$. Here $d\mu_{\lambda}(z):= \frac{dz}{|z|^{N + 2 \lambda}}$.
	\end{thm}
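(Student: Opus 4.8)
The plan is to establish the estimate by a Kruzhkov doubling-of-variables argument adapted to the kinetic setting, combining the kinetic formulations \eqref{2.6} of $u$ and $v$, exploiting the sign of the kinetic measures, and feeding in the BV bounds derived in Section~\ref{section5}. First I would record the microscopic identity $\mathbb{E}\int_{\mathbb{T}^N}|u-v|\,dx=\mathbb{E}\int_{\mathbb{T}^N}\int_{\mathbb{R}}\big(f\bar g+\bar f g\big)\,d\xi\,dx$, where $f=\mathbbm{1}_{u>\xi}$, $g=\mathbbm{1}_{v>\xi}$, and $\bar f=1-f$. I then double both the spatial variable (to $x,y$) and the velocity variable (to $\xi,\zeta$) and study the evolution of the regularized functional obtained by testing the two kinetic formulations against the tensorized weight $\rho_\delta(x-y)\,\psi_\varepsilon(\xi-\zeta)$, tracking $\mathbb{E}\int_{\mathbb{T}^N}\int_{\mathbb{T}^N}\int_{\mathbb{R}}\int_{\mathbb{R}} f(x,\xi)\,\bar g(y,\zeta)\,\rho_\delta\,\psi_\varepsilon$ together with its symmetric counterpart.

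Applying the Itô product rule to this doubled quantity produces, after integration in the duality brackets, an initial-data term (from $f_0,g_0$), a flux term, a second-order diffusion term, a fractional term, the Itô correction coming from the two stochastic integrals, and the two kinetic-measure contributions. Each of these is then estimated separately. The kinetic-measure terms carry the favourable sign because $m_i\ge\eta_1+\eta_2\ge0$, so they can be discarded once the parabolic dissipation is organised via the chain rule \eqref{chain}; the residual diffusion discrepancy is controlled through the local H\"older continuity of $\sigma,\tau$ and \eqref{last}, contributing the $\|\sigma-\tau\|_{L^\infty}$ and $\gamma_b$-dependent terms. The flux term splits into a self-comparison of $F'$ across the mollification scale, bounded by \eqref{flux1}--\eqref{flux2} together with the BV bound on $v$, and a genuine difference $F'$ versus $G'$, bounded by $\|F'-G'\|_{L^\infty}$ via \eqref{mixed}; the Itô correction, after using \eqref{noise1}--\eqref{noise2} and $\|\Phi-\Psi\|_{L^\infty}$, yields the $\lambda_{G_2}$-contribution.

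The main obstacle, and the technical heart of the argument, is the fractional term, where the two solutions carry distinct L\'evy kernels $\mu_\lambda$ and $\mu_\beta$. After doubling, this term reduces to an integral against $\mu_\lambda-\mu_\beta$, which I would split at a radius $r_1$. On $\{|z|\le r_1\}$ a second-order Taylor expansion of the test function, exploited against the $|z|^2$-integrability of the kernel near the origin, produces the factor $\sqrt{\int_{|z|\le r_1}|z|^2\,d|\mu_\lambda-\mu_\beta|(z)}$, where the square root arises from a Cauchy--Schwarz step matching the $\eta_2$-type dissipation absorbed by the kinetic measures. On $\{|z|>r_1\}$ the nonlocal differences are dominated by $L^1$-translation norms $\|u(\cdot+z)-u\|_{L^1}$ and $\|v(\cdot+z)-v\|_{L^1}$, which by the contraction principle \eqref{contraction} and the propagation of BV regularity are controlled by the corresponding translations of the initial data, giving the final boundary integral. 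Because the fractional exponents differ, some care is required to ensure that the small-$z$ singularity of each kernel is separately tamed before forming the difference.

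Finally, I would collect all contributions and let the velocity-regularization $\psi_\varepsilon$ be removed first (using \eqref{noise1}--\eqref{noise2} to control the velocity-doubling error of the Itô correction), after which the remaining small-scale discrepancies—namely $\|\Phi-\Psi\|_{L^\infty}$, the near-origin nonlocal term $\sqrt{\int_{|z|\le r_1}|z|^2\,d|\mu_\lambda-\mu_\beta|}$, and $\|\sigma-\tau\|_{L^\infty}$—all enter through the spatial mollification at scale $\delta$. Optimizing $\delta$ by balancing the regularization error $\big(\text{BV-bound}\big)\cdot\delta$ against the H\"older gains of orders $\tfrac12,\tfrac{\lambda_{G_1}}{2},\lambda_{G_2},\tfrac{\gamma_b}{2}$ produces the composite exponent $\min\{\tfrac12,\tfrac{\lambda_{G_1}}{2},\lambda_{G_2},\tfrac{\gamma_b}{2}\}$ on that grouped quantity, while $\|F'-G'\|_{L^\infty}$ and the data differences survive at first order. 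A Gronwall argument in $t$ then closes the estimate and yields the stated bound with a constant $C_T$ depending on $T,u_0,v_0,G$.
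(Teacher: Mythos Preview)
Your overall architecture (doubling, estimating each contribution, optimizing parameters) matches the paper, but there are two genuine gaps in the plan as stated.

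First, the treatment of the fractional term is not right. After the doubling and the manipulation that mirrors Step~2 of Theorem~\ref{th3.6}, the nonlocal contribution takes the form
\[
\int\Phi_\delta(u(x)-v(y))\big[(\tau_z u-u)\,\mu_\lambda(z)-(\tau_z v-v)\,\mu_\beta(z)\big]\theta_\epsilon(x-y)\,dz,
\]
which does \emph{not} simply ``reduce to an integral against $\mu_\lambda-\mu_\beta$'' because $u$ and $v$ are different. The paper handles this with a Hahn-type decomposition of $\mu_\lambda$ and $\mu_\beta$ into pieces $\mu_{\lambda_\pm},\mu_{\beta_\pm}$ adapted to the sign of $\mu_\lambda-\mu_\beta$ (see \eqref{one}--\eqref{two}); on each piece one can add and subtract so that half of the resulting terms inherit the ``good sign'' from the monotonicity argument of the uniqueness proof (these are the $J_2,J_4\le0$), while the other half carry the nonnegative measure $|\mu_\lambda-\mu_\beta|$ and are estimated via convexity of a primitive of $\Phi_\delta$, a Taylor expansion of $\theta_\epsilon$, and the BV bound. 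Your proposal misses this decomposition entirely. Moreover, the square root on $\int_{|z|\le r_1}|z|^2\,d|\mu_\lambda-\mu_\beta|$ does \emph{not} come from any Cauchy--Schwarz step against the parabolic measure $\eta_2$ (which is unrelated to the fractional term); the near-origin estimate is $C t\,\epsilon^{-1}\int_{|z|\le r_1}|z|^2\,d|\mu_\lambda-\mu_\beta|$, and the square root appears only at the final parameter choice $\epsilon\sim\sqrt{\int|z|^2\,d|\mu_\lambda-\mu_\beta|}$ balancing this against the $O(\epsilon)$ error from undoing the spatial doubling.

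Second, your limit procedure is not viable. You propose to remove the velocity mollifier first and then optimize the spatial scale. But the It\^o correction term, after splitting $\Phi(\xi)-\Psi(\zeta)=\big(\Phi(\xi)-\Psi(\xi)\big)+\big(\Psi(\xi)-\Psi(\zeta)\big)$, produces a contribution of order $(\text{velocity scale})^{-1}\|\Phi-\Psi\|_{L^\infty}^2$, which blows up as the velocity scale goes to zero; the flux term $\mathcal{R}_\theta$ and the diffusion term $K$ likewise couple the two scales through negative powers. In the paper both scales are kept finite and optimized jointly via the relation $\delta=\epsilon^{\lambda_2}$ with $\lambda_2=\max\{2/\lambda_{G_1},2/\gamma_b,2\}$ and $\epsilon$ set equal to the grouped quantity $\|\Phi-\Psi\|_{L^\infty}+\sqrt{\int_{|z|\le r_1}|z|^2\,d|\mu_\lambda-\mu_\beta|}+\|\sigma-\tau\|_{L^\infty}$; this is what produces the exponent $\min\{\tfrac12,\tfrac{\lambda_{G_1}}{2},\lambda_{G_2},\tfrac{\gamma_b}{2}\}$. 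No Gronwall step is needed---all error terms already carry an explicit factor of $t$.
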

\begin{remark}
	Througout this paper, the letter C to denote various generic constant. There are situations where constant may change from line to line, but the notation is kept unchanged, so long as it does not impact central idea.
\end{remark}
\section{Proof of uniqueness and continuity part of Theorem \ref{main theorem 1}}\label{Sec3}
\subsection{Left limit representation} Since we need some technical results to prove the contraction principle, we first state those technical results and then move on to the proof of uniqueness. Here we closely follow the approach of \cite{sylvain} and obtain a canonical property of kinetic solution, which is useful to show that the kinetic solution, $u$ has almost surely continuous trajactories. In the following proposition, we show that the almost surely property to be c\'adl\'ag is independent from test function, and limit from the left at any point $t_{*}\in(0,T]$ is also represented by a kinetic function.
\begin{proposition}\label{th3.1} Let $u_0$ be a initial data. Let $(u(t))_{t\in[0,T]}$ be a solution to \eqref{1.1} with initial data $u_0$, then the following conditions hold,
	\begin{enumerate}
		\item[1.] there exists a measurable subset $\Omega_1\subset\Omega$ of full probability  such that, for all $\omega\in\Omega_1$, for all $\varphi\in C_c(\mathbb{T}^N\times\mathbb{R})$,$t\to\langle f(\omega,t),\varphi\rangle$ is c\'adl\'ag.
		\item[2.] there exists an $L^\infty(\mathbb{T}^N\times\mathbb{R};[0,1])$-valued process $(f^{-}(t))_{t\in(0,T]}$ such that: for all $t\in(0,T]$, for all $\omega\in\Omega_1$ for all $\varphi\in C_c^2(\mathbb{T}^N\times\mathbb{R})$, $f^{-}(t)$ is a kinetic function on $\mathbb{T}^N$ which represents the left limit of $s\to\langle f(s),\varphi \rangle$ at t:
		\begin{align}\label{3.1}
			\langle f^{-}(t),\varphi\rangle=\lim_{s\to t^{-}}\langle f(s),\varphi\rangle .
		\end{align}
	\end{enumerate}
\end{proposition}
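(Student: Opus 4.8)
The plan is to upgrade the ``for each fixed $\varphi$, $\mathbb{P}$-almost surely c\'adl\'ag'' statement contained in property~2 of Definition~\ref{kinetic solution} into a \emph{single} exceptional null set valid simultaneously for every test function, and then to identify the resulting left limits as genuine kinetic functions by invoking the Young measure compactness of Corollary~\ref{th2.7}. The mechanism that makes the first step possible is the elementary uniform estimate that, because $0\le f(t)\le 1$,
$$\sup_{t\in[0,T]}\big|\langle f(t),\varphi\rangle-\langle f(t),\psi\rangle\big|\le \|\varphi-\psi\|_{L^1(\mathbb{T}^N\times\mathbb{R})}\qquad\text{for all }\varphi,\psi,$$
which lets uniform-in-$t$ approximation transfer the c\'adl\'ag property.

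To prove part~1, I would fix a countable set $\mathcal{D}\subset C_c^2(\mathbb{T}^N\times\mathbb{R})$ that is dense in $C_c(\mathbb{T}^N\times\mathbb{R})$ for the $L^1$-norm. For each $\varphi\in\mathcal{D}$, property~2 of Definition~\ref{kinetic solution} furnishes a $\mathbb{P}$-null set $N_\varphi$ off which $t\mapsto\langle f(t),\varphi\rangle$ is c\'adl\'ag. Setting $\Omega_1:=\Omega\setminus\bigcup_{\varphi\in\mathcal{D}}N_\varphi$, further intersected with the full-probability event on which $\sup_{t}\|u(t)\|_{L^q(\mathbb{T}^N)}^q<\infty$ for some fixed $q>1$ (available from \eqref{2.5}), still has full probability since $\mathcal{D}$ is countable. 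Then, for $\omega\in\Omega_1$ and arbitrary $\varphi\in C_c(\mathbb{T}^N\times\mathbb{R})$, choosing $\varphi_n\in\mathcal{D}$ with $\varphi_n\to\varphi$ in $L^1$, the maps $t\mapsto\langle f(\omega,t),\varphi_n\rangle$ are c\'adl\'ag and, by the displayed bound, converge uniformly in $t$; since a uniform limit of c\'adl\'ag functions is c\'adl\'ag, part~1 follows.

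For part~2, I fix $\omega\in\Omega_1$ and $t\in(0,T]$. Part~1 guarantees that $\ell_t(\varphi):=\lim_{s\to t^-}\langle f(s),\varphi\rangle$ exists for every $\varphi\in C_c$ and defines a linear functional bounded by $\|\varphi\|_{L^1}$, hence there is a unique $f^-(t)\in L^\infty(\mathbb{T}^N\times\mathbb{R};[0,1])$ with $\ell_t(\varphi)=\langle f^-(t),\varphi\rangle$. To show $f^-(t)$ is a kinetic function, I pick any sequence $s_n\uparrow t$ and note that $f(s_n)=\mathbbm{1}_{u(s_n)>\zeta}$ are equilibrium kinetic functions whose Young measures $\mathcal{V}^{s_n}_x=\delta_{u(x,s_n)}$ satisfy the uniform moment bound $\int_{\mathbb{T}^N}\int_{\mathbb{R}}|\zeta|^q\,d\mathcal{V}^{s_n}_x(\zeta)\,dx=\|u(s_n)\|_{L^q}^q\le\sup_s\|u(s)\|_{L^q}^q<\infty$ on $\Omega_1$. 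By Corollary~\ref{th2.7} a subsequence of $(f(s_n))$ converges weak-$*$ in $L^\infty$ to a kinetic function; but, using $L^1$-density of $C_c$ together with $\|f(s_n)\|_{L^\infty}\le1$, the entire sequence already converges weak-$*$ to $f^-(t)$, so the limit is $f^-(t)$ and is therefore a kinetic function. Measurability of $(f^-(t))$ as an $L^\infty$-valued process follows since each $\langle f^-(t),\varphi\rangle$ is a pointwise left limit of the measurable maps $t\mapsto\langle f(t),\varphi\rangle$.

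The step I expect to demand the most care is the identification in part~2 that the left limit $f^-(t)$ is not merely an $L^\infty$ function but a true kinetic function: this rests on converting the almost-sure finiteness of the $L^q$ bound for $u$ into a moment bound for the Young measures $\mathcal{V}^{s_n}$ that is uniform in the approximating times $s_n$, so that Corollary~\ref{th2.7} applies, and on verifying that its weak-$*$ limit coincides with the functional $\ell_t$ determined by the left limits. By contrast, the separability argument underlying part~1 is routine once the uniform $L^1$-duality bound above is available.
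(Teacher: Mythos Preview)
Your proposal is correct and reproduces precisely the argument the paper refers to: the paper's own ``proof'' is simply a citation to \cite[Proposition~2.10]{sylvain}, and the separability-plus-uniform-$L^1$-bound reduction for part~1 together with the Young measure compactness identification via Corollary~\ref{th2.7} for part~2 is exactly the content of that reference.
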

\begin{proof} For a proof, one can follow similar lines as proposed in \cite[Proposition 2.10]{sylvain}.
	\end{proof}
\noindent
\textbf{Bounds:} By construction, we note that $\mathcal{V}^{-}=-\partial_{\zeta}f^{-}$ satisfies the following bounds: for all $\omega\in\Omega_1$,
	\begin{align}\label{3.3}\sup_{t\in[0,T]}\int_{\mathbb{T}^N}\int_{\mathbb{R}}|\zeta|^p d\mathcal{V}_{x,t}^{-}(\zeta)dx\le C_p(\omega),\,\,\,\mathbb{E}(\sup_{t\in[0,T]}\int_{\mathbb{T}^N}\int_{\mathbb{R}}|\zeta|^p d\mathcal{V}_{x,t}^{-}(\zeta)dx)\le C_p.
	\end{align}
	We obtain \eqref{3.3} using Fatou's lemma and $\eqref{2.5}$.
	
	\noindent
\textbf{Equation for $f^{-}$:} For all $\varphi\in C_c^2(\mathbb{T}^N\times\mathbb{R})$, $\mathbb{P}$-a.s., for all $t\in[0,T]$,
	\begin{align}\label{3.6}
		\langle f^{-}(t),\varphi \rangle &= \langle f(0),\varphi\rangle +\int_0^t\langle f(s),F'(\zeta)\cdot\nabla_x\varphi\rangle ds+\int_0^t\langle f(s),A:D^2\varphi\rangle ds-\int_0^t\langle f(s),g_x^\lambda[\varphi]\rangle ds\notag\\&\qquad+\sum_{k\ge 1}\int_0^t \int_{\mathbb{T}^N}\int_{\mathbb{R}}\beta_k(x,\xi)\varphi(x,\zeta)d\mathcal{V}_{x,s}(\zeta)dxdw_k(s)\notag\\
		&\qquad+\frac{1}{2}\int_0^t\int_{\mathbb{T}^N}\int_{\mathbb{R}}\beta^2(x,\zeta)\partial_{\zeta}\varphi(x,\zeta)d\mathcal{V}_{x,s}(\zeta)dx ds-m(\partial_{\zeta}\varphi)([0,t)).
	\end{align}
In particular, we have
\begin{align}\label{3.5}\langle f(t)-f^{-}(t), \varphi\rangle=-m(\partial_\zeta\varphi)(\{t\}).
\end{align}
It implies that outside the set of atomic points (at most countable points) of $A\mapsto m(\partial_{\zeta}\varphi)(A)$, we get $\langle f(t),\varphi\rangle=\langle f^{-}(t),\varphi\rangle.$ It shows that  $\p$-almost surely, $f=f^{-}$ a.e. $t\in[0,T]$.
\noindent

 In particilar, equation \eqref{3.6} gives us the following equation on $f^{-}$;  for all $\varphi\in C_c^2(\mathbb{T}^N\times\mathbb{R})$, $\mathbb{P}$-a.s., for all $t\in[0,T]$,
\begin{align}
	\langle f^{-}(t),\varphi \rangle &= \langle f(0),\varphi\rangle +\int_0^t\langle f^{-}(s),F'(\zeta)\cdot\nabla_x\varphi\rangle ds+\int_0^t\langle f^{-}(s),A:D^2\varphi\rangle ds-\int_0^t\langle f^-(s),g_x^\lambda[\varphi]\rangle ds\notag\\&\qquad+\sum_{k\ge 1}\int_0^t \int_{\mathbb{T}^N}\int_{\mathbb{R}}\beta_k(x,\zeta)\varphi(x,\zeta)d\mathcal{V}_{x,s}^{-}(\zeta)dxdw_k(s)\notag\\
	&\qquad+\frac{1}{2}\int_0^t\int_{\mathbb{T}^N}\int_{\mathbb{R}}\beta^2(x,\zeta)\partial_{\zeta}\varphi(x,\zeta)d\mathcal{V}_{x,s}^{-}(\zeta)dx ds-m(\partial_{\zeta}\varphi)([0,t)).
\end{align}

 We have c\'adl\'ag condition  in formulation of kinetic solutions which ensure that almost surely, tracjectories of soluton $u$ are right continuous in $L^p(\mathbb{T}^N)$. For that purpose, here we state the following result.
\begin{cor}\label{continuity} Let $(u(t))_{t\in[0,T]}$ be a solution to \eqref{1.1} with initial datum $u_0$. Then, for all $p\in[1,+\infty)$, for all $\omega\in\Omega_1$ (given in Proposition \ref{th3.1}), the map $t\mapsto u(t)$ from $[0, T]$ to $L^p(\mathbb{T}^N)$ is continuous from the right.
\end{cor}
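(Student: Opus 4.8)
The plan is to deduce the strong right-continuity of $t\mapsto u(t)$ from the \emph{weak} right-continuity of the kinetic function $t\mapsto\langle f(t),\varphi\rangle$ already granted by Proposition \ref{th3.1}, by upgrading weak-$*$ convergence of kinetic functions to strong $L^p$-convergence of their associated velocities. The mechanism for this upgrade is the fact that $f(t)=\mathbbm{1}_{u(t)>\zeta}$ is always an equilibrium, so that the convergence-to-equilibrium Lemma \ref{th2.8} applies; this is precisely the point where the indicator structure of a kinetic solution forbids any loss of mass in the limit.

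Fix $\omega\in\Omega_1$ and $t\in[0,T)$. It suffices to show that along an arbitrary sequence $s_n\downarrow t$ one has $u(s_n)\to u(t)$ in $L^p(\mathbb{T}^N)$; since the candidate limit will turn out to be $u(t)$ independently of the chosen sequence, a standard subsequence argument then yields $u(s)\to u(t)$ as $s\to t^+$. First I would fix $q>p$ and invoke the uniform moment bound \eqref{2.5}, which for $\omega$ in a set of full probability gives $\sup_{t}\|u(\omega,t)\|_{L^q(\mathbb{T}^N)}<\infty$, equivalently $\sup_n\int_{\mathbb{T}^N}\int_{\mathbb{R}}|\zeta|^q\,d\mathcal{V}_{x,s_n}(\zeta)\,dx<\infty$ since $\mathcal{V}_{x,s_n}=\delta_{u(s_n)(x)}$. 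Thus the kinetic functions $f(s_n)$ satisfy the hypothesis \eqref{2.11} of Corollary \ref{th2.7}, and after extracting a subsequence that result provides a kinetic function $g$ with $f(s_n)\rightharpoonup g$ in $L^\infty(\mathbb{T}^N\times\mathbb{R})$ weak-$*$.

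Next I would identify the limit. Weak-$*$ convergence in particular gives $\langle f(s_n),\varphi\rangle\to\langle g,\varphi\rangle$ for every $\varphi\in C_c(\mathbb{T}^N\times\mathbb{R})\subset L^1(\mathbb{T}^N\times\mathbb{R})$. On the other hand, since $s_n\downarrow t$ and $t\mapsto\langle f(t),\varphi\rangle$ is c\`adl\`ag for every such $\varphi$ by Proposition \ref{th3.1}, right-continuity forces $\langle f(s_n),\varphi\rangle\to\langle f(t),\varphi\rangle$. Because $C_c(\mathbb{T}^N\times\mathbb{R})$ is dense in $L^1(\mathbb{T}^N\times\mathbb{R})$ and hence separating for $L^\infty$, the two limits coincide and $g=f(t)$ almost everywhere; in particular $g=\mathbbm{1}_{u(t)>\zeta}$ is an equilibrium.

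With the limit identified as an equilibrium, I would finally apply Lemma \ref{th2.8}. Writing $u(s_n)(x)=\int_{\mathbb{R}}\zeta\,d\mathcal{V}_{x,s_n}(\zeta)$, the barycentre of the equilibrium measure $\delta_{u(s_n)(x)}$, the Lemma yields $u(s_n)\to u(t)$ in $L^{p}(\mathbb{T}^N)$ for every $1\le p<q$. Since the bound above holds for arbitrarily large $q$, this covers all $p\in[1,+\infty)$, and as the limit $u(t)$ does not depend on the extracted subsequence, the full right-limit exists and equals $u(t)$. The only genuinely delicate step is the identification $g=f(t)$ followed by the weak-to-strong upgrade: it is exactly the equilibrium (indicator) structure of kinetic solutions, combined with Lemma \ref{th2.8}, that rules out the oscillation or concentration a weak-$*$ limit would normally permit, thereby converting the cheap weak right-continuity into genuine $L^p$ right-continuity.
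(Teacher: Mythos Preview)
Your argument is correct and matches the approach the paper defers to (namely \cite[Corollary 2.13]{sylvain}): use the c\`adl\`ag property of $t\mapsto\langle f(t),\varphi\rangle$ to pin down the weak-$*$ limit of $f(s_n)$ as the equilibrium $f(t)$, and then invoke Lemma~\ref{th2.8} together with the uniform $L^q$-bound to upgrade to strong $L^p$-convergence. The only cosmetic point is that you write ``for $\omega$ in a set of full probability'' when the statement is asserted for every $\omega\in\Omega_1$; in the construction of $\Omega_1$ (following \cite{sylvain}) one already intersects with the null-set exceptions coming from \eqref{2.5} for all integer $q$, so the pathwise bound $\sup_t\|u(\omega,t)\|_{L^q}<\infty$ is indeed available on $\Omega_1$ itself.
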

\begin{proof}
	We refer to \cite[Corollary 2.13]{sylvain} for a proof.
	\end{proof}
\subsection{Doubling of variables}  As a next step towards the proof of uniqueness, we need a technical proposition relating two kinetic solutions to \eqref{1.1}. We will also use the following notation: If $f:X\times\mathbb{R}\to[0,1]$ is kinetic function, we denote by $\bar{f}$ the conjugate function $\bar{f}=1-f$. We denote by $f^+$ the right limit, which is simply $f$, that is $f^+(t):=f(t)$. From now on,  we will work with  two fixed representatives of $f$ $(f^{+}\,\text{and}\, f^{-})$  and we can take any of them in integral with respect to time or in a stochastic integral.  We need the following technical Proposition to prove uniqueness of kinetic solution. We follow similar lines as proposed in \cite{sylvain} for the proof of following proposition. Here, we give the details for the sake of completeness.
\begin{proposition}\label{th3.5}
	Let $(u_1(t))_{t\in[0,T]}$ and $(u_2(t))_{t\in[0,T]}$ be kinetic solutions to \eqref{1.1} with initial data $u_{1,0}$ and $u_{2,0}$, respectively and denote $f_1(t)=\mathbbm{1}_{u_1(t)\textgreater\,\xi}\,\,\&\,\, f_2(t)= \mathbbm{1}_{u_2(t)\,\textgreater\,\xi}$. Then, for all $t\in[0,T]$ and non-negative test functions $\theta\in{C}^\infty(\mathbb{T}^N)$,\, $\kappa\in{C_{c}^\infty(\mathbb{R})}$,  we have
	\begin{align}\label{app inequality}
		\begin{aligned}
		\mathbb{E}\bigg[\int_{(\mathbb{T}^N)^2}&\int_{\mathbb{R}^2}\theta(x-y)\kappa(\xi-\zeta)f_1^{\pm}(x,t,\xi)\bar {f}_2^{\pm}(y,t,\zeta)d\xi d\zeta dx dy\bigg]\\
		&\le\mathbb{E}\bigg[ \int_{(\mathbb{T}^N)}\int_{\mathbb{R}^2}\theta(x-y)\kappa(\xi-\zeta) f_{1,0} (x,\xi)\bar f_{2,0} (y,\zeta)d\xi d\zeta dx dy+ \mathcal{R}_{\theta}+ \mathcal{R}_{\kappa}+J+K\bigg], 
		\end{aligned}
	\end{align}
	where
	\begin{align*}
		\mathcal{R}_{\theta}&=\int_0^t\int_{(\mathbb{T}^N)^2}\int_{\mathbb{R}^2}f_1(x,s,\xi)\bar{f}_2(y,s,\zeta)(F'(\xi)-F'(\zeta))\kappa(\xi-\zeta)d\xi d\zeta\cdot\nabla\theta(x-y)dx dy ds,\\
		\mathcal{R}_{\kappa}\notag&=\frac{1}{2}\int_{(\mathbb{T}^N)^2}\theta(x-y)\int_0^t\int_{\mathbb{R}^2}\kappa(\xi-\zeta)\sum_{k\ge1}|\beta_k(x,\xi)-\beta_k(y,\zeta)|^2d\mathcal{V}_{x,s}^{1}\oplus\mathcal{V}_{y,s}^{2}(\xi,\zeta)dx dy ds,\\
		J&=-2\int_0^t\int_{(\mathbb{T}^N)^2}\int_{\mathbb{R}^2}f_1 (x,s,\xi)\bar {f}_2(y,s,\zeta)\kappa(\xi-\zeta)g_x^\lambda(\theta(x-y))d\xi d\zeta dx dy ds\notag\\
		&\qquad-\int_0^t\int_{(\mathbb{T}^N)^2}\int_{\mathbb{R}^2}f_1(x,s,\xi)\partial_{\xi}\kappa(\xi-\zeta)\theta(x-y)d\eta_{2,2}(y,s,\zeta)dxd\xi \notag\\
		&\qquad+\int_0^t\int_{(\mathbb{T}^N)^2}\int_{\mathbb{R}^2}\bar{f}_2(y,s,\zeta)\partial_{\zeta}\kappa(\xi-\zeta)\theta(x-y)d\eta_{1,2}(x,s,\xi)dy d\zeta,\\
		K&=\int_0^t\int_{(\mathbb{T}^N)^2}\int_{\mathbb{R}^2}f_1\bar{ f}_2(A(\xi)+A(\zeta)):D_x^2\theta(x-y)\kappa(\xi-\zeta)d\xi d\zeta dx dy ds\\
		&\qquad-\int_0^t\int_{\mathbb{T}^N}^2\int_{\mathbb{R}^2}\theta(x-y)\kappa(\xi-\zeta) d\mathcal{V}_{x,s}^1(\xi)dx d\eta_{2,3}(y,s,\zeta)\\
		&\qquad-\int_0^t\int_{\mathbb{T}^N}^2\int_{\mathbb{R}^2}\theta(x-y)\kappa(\xi-\zeta)d\mathcal{V}_{y,s}^2(\zeta)\,dy\,d\eta_{1,3}(x,s,\xi).
		\end{align*}
\end{proposition}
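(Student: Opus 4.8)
The plan is to perform a Kruzhkov doubling of variables at the kinetic level, in the spirit of \cite{sylvain, vovelle}. For a fixed point $(y,\zeta)$ the function $\varphi(x,\xi)=\theta(x-y)\kappa(\xi-\zeta)$ belongs to $C_c^2(\mathbb{T}^N\times\mathbb{R})$, so I would insert it into the equation \eqref{3.6} for $f_1$; symmetrically, for fixed $(x,\xi)$ I would test the analogous equation for $\bar f_2=1-f_2$ in the variables $(y,\zeta)$. Both $t\mapsto\langle f_1(t),\varphi\rangle$ and $t\mapsto\langle\bar f_2(t),\varphi\rangle$ are real semimartingales, so the It\^o product rule $d(XY)=X\,dY+Y\,dX+d\langle X,Y\rangle$ applies; integrating the resulting identity against $dx\,d\xi\,dy\,d\zeta$ and taking $\mathbb{E}$ produces the initial term $\langle\langle f_{1,0}\bar f_{2,0},\theta\kappa\rangle\rangle$, the drift contributions from each equation, and one It\^o correction. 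By the growth bound \eqref{2.2} together with \eqref{2.5} the stochastic integrals are true martingales and vanish under $\mathbb{E}$; the freedom to evaluate at $t$ with either representative $f^+$ or $f^-$ (which coincide for a.e.\ $t$ by \eqref{3.5}) yields the $\pm$ on the left-hand side.

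I would then read off the deterministic contributions one by one. The two flux terms, carrying $F'(\xi)\cdot\nabla_x\varphi$ and $F'(\zeta)\cdot\nabla_y\varphi$, combine through $\nabla_x\theta(x-y)=-\nabla_y\theta(x-y)$ and one integration by parts into precisely $\mathcal{R}_\theta$, with the difference $F'(\xi)-F'(\zeta)$ acting on $\nabla\theta$. The single It\^o correction $d\langle\cdot,\cdot\rangle$ between the two martingale parts, which are driven by the common Brownian motions $w_k$, combines with the two diagonal corrections $\tfrac12\langle\beta^2\partial_\xi\varphi,\mathcal{V}^1\rangle$ and its $\mathcal{V}^2$-counterpart to form the perfect square $\sum_{k}|\beta_k(x,\xi)-\beta_k(y,\zeta)|^2$, giving $\mathcal{R}_\kappa$; here I use the structure $\Phi(x,u)\gamma_k=\beta_k(x,u)$. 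The second-order terms are treated as in \cite{vovelle,hofmanova}: using $D_x^2\theta(x-y)=D_y^2\theta(x-y)$ produces the transport-type piece $(A(\xi)+A(\zeta)):D^2\theta$, while the chain rule of item 5 of Definition \ref{kinetic solution} rewrites the parabolic dissipation so that the cross terms pairing $\mathcal{V}^1,\mathcal{V}^2$ against the measures $\eta_{2,3},\eta_{1,3}$ emerge; together these assemble $K$.

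The term $J$, arising from the fractional Laplacian, is where I expect the real difficulty. Its two pieces are $-\langle f_1,g_x^\lambda[\varphi]\rangle$ and $-\langle\bar f_2,g_y^\lambda[\varphi]\rangle$. Since $\mu$ is even and $\theta$ depends only on $x-y$, translation invariance gives the key identity $g_x^\lambda[\theta(x-y)]=g_y^\lambda[\theta(x-y)]$, which collapses the two pieces into the single cross term $-2\int f_1\bar f_2\,\kappa\,g_x^\lambda(\theta(x-y))$ appearing in $J$. The remaining, irreducibly nonlocal contributions must be matched against the dissipation measures $\eta_{1,2},\eta_{2,2}$: expanding $g^\lambda$ through the principal-value formula \eqref{2.4}, splitting the $z$-integral into near and far fields, and using the explicit form of $\eta_1$ in item 6 of Definition \ref{kinetic solution}, one identifies the near-field remainder with the $\eta_{\cdot,2}$ terms and controls its sign; this is exactly the step where nonlocality obstructs the pointwise localization that made the first- and second-order terms elementary, and where the main new work lies. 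Finally, the identity is turned into the inequality \eqref{app inequality} by invoking $m_i\ge\eta_{i,2}+\eta_{i,3}$ from item 6: each kinetic measure enters through a dissipation term $-m_i(\partial_\xi\varphi)$, and the nonnegative surplus $m_i-(\eta_{i,2}+\eta_{i,3})\ge0$ can be discarded in the favorable direction. A standard regularization of $\theta,\kappa$ and the a.e.\ identity $f=f^-$ then close the argument.
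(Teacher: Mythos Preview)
Your high-level plan---It\^o product rule for the semimartingales $\langle f_1,\phi_1\rangle$ and $\langle\bar f_2,\phi_2\rangle$, specialize to $\varphi=\theta(x-y)\kappa(\xi-\zeta)$, exploit $(\nabla_x+\nabla_y)\varphi=0$ and $(\partial_\xi+\partial_\zeta)\varphi=0$, then discard the nonnegative surplus $m_{i,1}=m_i-\eta_{i,2}-\eta_{i,3}$---matches the paper. But you misidentify where the $\eta$-terms in $J$ and $K$ come from, and that misattribution would not let you carry the proof through as written.

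Concretely: after the product formula the fractional contribution is exactly $-\int f_1\bar f_2\,(g_x^\lambda+g_y^\lambda)[\varphi]$, which by $g_x^\lambda[\theta(x-y)]=g_y^\lambda[\theta(x-y)]$ collapses to the single line $-2\int f_1\bar f_2\,\kappa\,g_x^\lambda[\theta]$ with nothing left over. There is no ``remaining, irreducibly nonlocal contribution'' to be matched against $\eta_{\cdot,2}$. Instead, the second and third lines of $J$ come directly from the kinetic-measure terms $-\int\bar f_2^{+}\partial_\xi\varphi\,dm_1$ and $+\int f_1^{-}\partial_\zeta\varphi\,dm_2$ once one writes $m_i=m_{i,1}+\eta_{i,2}+\eta_{i,3}$: the $\eta_{i,2}$ pieces are simply retained and placed in $J$. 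Likewise the $\eta_{\cdot,3}$-lines of $K$ arise from the $\eta_{i,3}$ pieces of the same decomposition, after an elementary integration by parts in the kinetic variable turns $\bar f_2\,\partial_\xi\kappa$ into $\kappa\,d\mathcal V^2$; the chain rule \eqref{chain} plays no role here. The principal-value expansion, near/far-field splitting, and chain rule that you invoke are the tools used \emph{later}, in Theorem \ref{th3.6}, to estimate $J\le 0$ and $|K|\lesssim t\,\delta^{2\gamma}\epsilon^{-2}$ after $J$ and $K$ have already been assembled. At the level of Proposition \ref{th3.5} the argument is purely algebraic: product formula, the two symmetry identities, the splitting of $m_i$, and the sign of the remainder $K'=-\int\varphi\,d\mathcal V^{1,-}\,dm_{2,1}-\int\varphi\,d\mathcal V^{2,+}\,dm_{1,1}\le 0$.
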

\begin{remark}
	Let us fix some notation corresponding kinetic solutions $u_1$ and $u_2$. Let $m_1$ and $m_2$ be are kinetic measures corresponding $u_1$ and $u_2$ respectively, satisfying $m_1 \ge \eta_{1,2}+\eta_{1,3}$ and $m_2\ge \eta_{2,2}+\eta_{2,3}$, $\mathbb{P}$-almost surely, where 
	\begin{align*}\eta_{1,2}(x,t,\xi)&=\int_{\mathbb{R}^N}|u_1(x+z,t)-\xi|\mathbbm{1}_{Conv\{u_1(x,t),u_1(x+z,t)\}}(\xi)\mu(z)dz,\\
	\eta_{1,3}(x, t, \xi)&=\big|\mbox{div}\int_0^{u_1(x,t)} \sigma(s)ds\big|^2 \delta_{u_1(x,t)}(\xi),\\
\eta_{2,2}(y,t,\zeta)&=\int_{\mathbb{R}^N}|u_2(y+z,t)-\zeta|\mathbbm{1}_{Conv\{u_2(y,t),u_2(y+z,t)\}}(\zeta)\mu(z)dz,\\
	\eta_{2,3}(y,t,\zeta)&=\big|\mbox{div}\int_0^{u_2(y,t)} \sigma(s)ds\big|^2 \delta_{u_2(y,t)}(\zeta).
	\end{align*}
	We can write $m_1=m_{1,1}+\eta_{1,2}+\eta_{1,3}$ and $m_2=m_{2,1}+\eta_{2,2}+\eta_{2,3}$ for some non negative measure $m_{1,1}$ and $m_{2,1}$ respectively.
\end{remark}
\begin{proof}
	We define $\bar{\beta}_1^2(x,\xi)= \sum_{k\ge1}|\beta_k(x,\xi)|^2$, and $\bar{\beta}_2^2(y,\zeta)=\sum_{k\ge1}|\beta_k(y,\zeta)|^2$. Let $\phi_1 \in C_c^\infty(\mathbb{T}_x^N\times\mathbb{R}_{\xi})$ and $\phi_2\in C_c^\infty(\mathbb{T}_y^N\times\mathbb{R}_{\zeta})$. For $f_1=f_1^+$ we have
	$$\langle f_1^+(t),\phi_1 \rangle = \langle \mu_1^*, \partial_\xi \phi_1 \rangle+M_1(t)$$
	with
	$$M_1(t)=\sum_{k\ge1} \int_0^t\int_{\mathbb{T}^N}\int_{\mathbb{R}} \beta_k(x,\xi)\phi_1(x,\xi) d\mathcal{V}_{x,s}^1(\xi) dx dw_k (s)$$
	and 
	\begin{align*}
		\langle \mu_1^*, \partial_{\xi}\phi _1 \rangle ([0,t])& =\langle f_{1,0},\phi_1 \rangle \delta_0([0,t])+\int_0^t \langle f_1, F'\cdot\nabla \phi_1 \rangle ds+\int_0^t\langle f_1(s),A(\xi):D_x^2\phi_1\rangle ds-\int_0^t \langle f_1 , g_x^\lambda[\phi_1]\rangle ds\\
		&\qquad+\frac{1}{2} \int_0^t \int_{\mathbb{T}^N}\int_{\mathbb{R}} \partial_{\xi} \phi_1 \bar{\beta}_1^2(x,\xi) d\mathcal{V}_{x,s}^1(\xi) dx ds -m_1(\partial_{\xi} \phi_1)([0,t]).
	\end{align*}
	We have $m_1(\partial_{\xi}\phi_1)(\{0\})=0$, and value of $\langle \mu_1^* ,\partial_{\xi} \phi_1\rangle (\{0\})$ is $\langle f_{1,0} , \phi_1 \rangle.$ Similarly,
	$$\langle \bar{f}_2^+(t),\phi_2 \rangle=\langle \bar{\mu}_2^*, \partial_{\zeta} \phi_2 \rangle([0,t])+\bar{M}_2(t)$$
	with 
	$$\bar{M}_2(t)=\sum_{k\ge1} \int_0^t\int_{\mathbb{T}^N}\int_{\mathbb{R}} \beta_k(y,\zeta)\phi_2(y,\zeta) d\mathcal{V}_{y,s}^1(\zeta) dx dw_k (s)$$
	and
	\begin{align*}
		\langle \bar{\mu}_2^*, \phi _2 \rangle ([0,t])& =\langle \bar{f}_{2,0}, \phi_1 \rangle \delta_0([0,t])+\int_0^t \langle\bar{ f}_2, F'\cdot\nabla \phi_2 \rangle ds+\int_0^t\langle \bar{f}_2(s),A(\zeta):D_y^2\phi_2\rangle ds-\int_0^t \langle \bar{f}_2 , g_y^\lambda[\phi_2]\rangle ds\\
		&\qquad-\frac{1}{2} \int_0^t \int_{\mathbb{T}^N}\int_{\mathbb{R}} \partial_{\zeta} \phi_2 \bar{\beta}_2^2(y,\zeta) d\mathcal{V}_{y,s}^2(\zeta) dx ds +m_2(\partial_{\zeta} \phi_2)([0,t]),	
	\end{align*}
	where $\langle \bar{\mu}_2^* ,\partial_{\zeta}\phi_2 \rangle (\{0\})=\langle \bar{f}_{2,0} , \phi_2 \rangle$. Let $\varphi(x,\xi, y, \zeta)= \phi_1(x,\xi)\phi_2(y,\zeta).$ Using It\^o formula for $M_1(t)\bar{M}_2(t)$, and integration by parts for functions of finite variation for $\langle \mu_1^* , \partial_{\xi} \phi_1 \rangle[0,t]\,\langle \bar{\mu}_2^*,\partial_{\zeta} \phi_2 \rangle ([0,t])$,(see \cite[Chapter 0]{Rev}) which gives
	\begin{align*}
		& \langle \mu_1^*, \partial_{\xi} \phi_1 ([0,t])\rangle\,\langle \bar{\mu}_2^* ,\partial_{\zeta}\phi_2 \rangle ([0,t])\\&=\langle \mu_1^* , \partial_{\xi} \phi_1 \rangle (\{0\})\,\langle \bar{\mu}_2^*, \partial_{\zeta} \phi_2 \rangle (\{0\})+\int_{(0,t]} \langle \mu_1^* ,\partial_{\xi} \phi_1 \rangle([0,s)) d \langle \bar{\mu}_2^*, \partial_{\zeta}\phi_2 \rangle (s)+\int_{(0,t]}\langle \bar{\mu}_2^* , \partial_{\zeta} \phi_2 ([0,s]) d\langle \mu_1^*, \partial_{\xi} \phi_1 \rangle (s)
	\end{align*}
	and the following formula
	\begin{align*}
		\langle \mu_1^* , \partial_{\xi} \phi_1 \rangle ([0,t]) \bar{M}_2(t)=\int_0^t \langle \mu_1^*, \partial_{\xi} \phi_1\rangle ([0,s]) d\bar{M}_2(s) + \int_0^t \bar{M}_2(s) \langle \mu_1^* , \partial_{\xi} \phi_1 \rangle (ds),
	\end{align*}
	which is easy to obtain since $\bar{F}_2$ is continuous and a similar formula for $\langle \bar{\mu}_2^* ,\partial_{\zeta}\phi_2 \rangle F_1(t)$, we get that
	$$\langle f_1^+(t), \phi_1 \rangle\,\langle \bar{f}_2^+(t),\phi_2 \rangle =\langle \langle f_1^{+}(t)\,\bar{f}_2^{+}(t), \varphi \rangle \rangle.$$ 
	It implies that
	\begin{align}\label{un}
		\mathbb{E}\langle \langle f_1^+(t)\,\bar{f}_2^+(t), \varphi \rangle\rangle=& \mathbb{E}\langle \langle f_{1,0} \bar{f}_{2,0}, \varphi\rangle\rangle\notag\\
		&+\mathbb{E}\int_0^t\int_{(\mathbb{T}^N)^2}\int_{\mathbb{R}^2} f_1 \bar{f}_2 (F'(\xi)\cdot\nabla_x + F'(\zeta)\cdot\nabla_y)\varphi d\xi d\zeta dx dy ds\notag\\
		&+\mathbb{E}\int_0^t\int_{\mathbb{T}^N}^2\int_{\mathbb{R}^2}f_1\bar{ f}_2(A(\xi)+A(\zeta)):D_x^2[\varphi]d\xi d\zeta dx dy ds\notag
		\\
		&-\mathbb{E}\int_0^t \int_{(\mathbb{T}^N)^2}\int_{\mathbb{R}^2} f_1 \bar{f}_2(g_x^\lambda+g_y^\lambda) [\varphi]\, d\xi d\zeta dx dy ds\notag\\
		&+\frac{1}{2}\mathbb{E}\int_0^t\int_{(\mathbb{T}^N)^2}\int_{\mathbb{R}^2}\partial_{\xi} \varphi \bar{f}_2(s) \bar{\beta}_1^2(x,\xi)\,d\mathcal{V}_{x,s}^1(\xi) d\zeta dx dy ds\notag\\
		&-\frac{1}{2} \mathbb{E} \int_0^t \int_{(\mathbb{T}^N)^2}\int_{\mathbb{R}^2}\partial_{\zeta} \varphi f_1(s) \bar{\beta}_2^2(y,\zeta)\, d\mathcal{V}_{y,s}^2(\zeta)\,d\xi dy dx ds\notag\\
		&-\mathbb{E}\int_0^t \int_{(\mathbb{T}^N)^2}\int_{\mathbb{R}^2} \bar{\beta}_{1,2}\varphi\,d\mathcal{V}_{x,s}^1(\xi)\,d\mathcal{V}_{y,s}^2(\zeta) dx dy ds\notag\\
		&-\mathbb{E}\int_{(0,t]}\int_{(\mathbb{T}^N)^2}\int_{\mathbb{R}^2} \bar{ f}_2^{+}(s)\partial_{\xi} \varphi dm_1(x,s,\xi) d\zeta dy\notag\\
		&+\mathbb{E}\int_{(0,t]}\int_{(\mathbb{T}^N)^2}\int_{\mathbb{R}^2}f_1^-(s) \partial_{\zeta} \varphi  dm_2(y,s,\zeta) d\xi dx	  
	\end{align}
	where $\bar{\beta}_{1,2}(x,y;\xi,\zeta):=\sum_{k\ge1 } \beta_k(x,\xi)\beta_k(y,\zeta)$ and $\langle\langle\cdot,\cdot \rangle\rangle$ indicates the duality distribution over $\mathbb{T}_x^N\times\mathbb{R}_{\xi}\times\mathbb{T}_y^N\times\mathbb{R}_{\zeta}$. Equation \eqref{un} also hold for any test function $\varphi\in C_c^\infty(\mathbb{T}_x^N\times\mathbb{R}_{\xi}\times\mathbb{T}_y^N\times\mathbb{R}_{\zeta})$ by a density argument. The assumption that $\varphi$ is compactly supported can be relaxed thanks to the condition at infinity on $m_i$ and $\mathcal{V}^i$, $i=1,2$. Using truncation argument for $\varphi$, we obtain that equation \eqref{un} is also  true if $\varphi \in C_b^\infty (\mathbb{T}_x^N\times\mathbb{R}_{\xi}\times\mathbb{T}_y^N\times\mathbb{R}_{\zeta})$ is compactly supported in a neighbourhood of the diagonal 
	$\big\{(x,\xi,x,\xi); x\in\mathbb{T}^N, \xi \in\mathbb{R}\big\}.$
	We then take $\varphi=\theta \kappa$ where $\theta=\theta(x-y), \kappa=\kappa(\xi-\zeta)$.
	We use the following identities
	$$(\nabla_x+\nabla_y)\varphi=0,\,\,\,\,\, (\partial_{\xi}+\partial_\zeta)\varphi=0,$$
	to obtain
	\begin{align*}
		&\mathbbm{E}\bigg[\int_{(\mathbbm{T}^N)^2}\int_{\mathbb{R}^2}\theta(x-y)\kappa(\xi-\zeta)f_1^{+}(x,s,\xi)\bar{f}_2^{+}(y,t,\zeta)d\xi d\zeta dx dy\bigg]\\
		&=\mathbb{E}\bigg[\int_{(\mathbb{T}^N)^2}\int_{\mathbb{R}^2}\theta(x-y)\kappa(\xi-\zeta) f_{1,0}(x,\xi)\bar{f}_{2,0}(y,\zeta)d\xi d\zeta dx dy+J+K'+\mathcal{R}_{\theta}+\mathcal{R}_{\kappa}+K\bigg],
	\end{align*} 	  
	where
	\begin{align*}
		K'&=\int_{(0,t]}\int_{\mathbb{T}^N}\int_{\mathbb{R}}f_1^{-}(x,s,\xi)\partial_\zeta\varphi\,\,\, dm_{2,1}(y,s,\zeta)-\int_{(0,t]}\int_{\mathbb{T}^N}\int_{\mathbb{R}}\bar{f}_2^{+}(y,s,\zeta)\,\,\partial_\xi \varphi\,\,\, dm_{1,1}(x,s,\xi)\\
		&=-\int_{(0,t]}\int_{\mathbb{T}^N}\int_{\mathbb{R}}f^{-}_1(x,s,\xi)\partial_{\xi} \varphi dm_{2,1}(y,s,\zeta)d\xi dx-\int_{(0,t]}\int_{\mathbb{T}^N}\int_{\mathbb{R}}f_2(y,s,\zeta)\partial_{\zeta} \varphi dm_{1,1}(x,s,\xi)dy d\zeta\\
		&=-\int_{(0,t]}\int_{\mathbb{T}^N}\int_{\mathbb{R}}\varphi d\mathcal{V}_{x,s}^{1,-}(\xi) dm_{2,1}(y,s,\zeta) dx-\int_{(0,t]}\int_{\mathbb{T}^N}\int_{\mathbb{R}}\varphi d\mathcal{V}_{y,s}^{2,+}(\zeta) dm_{1,1}(x,s,\xi)dy\\
		&\le\,0.
	\end{align*}
	Consequently we have the required estimate: for all $t\in[0,T]$
	\begin{align}
		&\mathbb{E}\bigg[\int_{(\mathbb{T}^N)^2}\int_{(\mathbb{R})^2}\theta(x-y)\kappa(\xi-\zeta)f_1^{\pm}(x,t,\xi)\bar {f}_2^{\pm}(y,t,\zeta)d\xi d\zeta dx dy\bigg]\notag\\
		&\qquad\leq\mathbb{E}\bigg[\int_{(\mathbb{T}^N)}\int_{(\mathbb{R}^2)}\theta(x-y)\kappa(\xi-\zeta)f_{1,0}(x,\xi)\bar f_{2,0} (y,\zeta)d\xi d\zeta dx dy + \mathcal{R}_{\theta}+ \mathcal{R}_{\kappa}+J+K\bigg].\notag
	\end{align}
\end{proof}
\begin{Remark}
	One can easily notice that, if $f_1^\pm=f_2^\pm$, then inequality \eqref{app inequality} holds pathwise, that is, $\p$-almost surely, for all $t\in[0,T]$,
	\begin{align}\label{app inequality1}
		\begin{aligned}
			\int_{(\mathbb{T}^N)^2}&\int_{\mathbb{R}^2}\theta(x-y)\kappa(\xi-\zeta)f_1^{\pm}(x,t,\xi)\bar {f}_2^{\pm}(y,t,\zeta)d\xi d\zeta dx dy\\
			&\le\int_{(\mathbb{T}^N)}\int_{\mathbb{R}^2}\theta(x-y)\kappa(\xi-\zeta) f_{1,0} (x,\xi)\bar f_{2,0} (y,\zeta)d\xi d\zeta dx dy+ \mathcal{R}_{\theta}+ \mathcal{R}_{\kappa}+J+K, 
		\end{aligned}
	\end{align}
where $\mathcal{R}_{\theta}, \mathcal{R}_{\kappa}, J,$ and $K$ are defined as above in Proposition \ref{th3.5}. For proof of this, there is no need to take expectation after use of It\^o formula and integration by part formula in the proof of Proposition \ref{th3.5}, since after the steps of approximation of test functions $\theta(x-y)$, and $\kappa(\xi-\zeta)$, the contributed martingale terms will cancel. 
\end{Remark}
\begin{thm}[\textbf{Contraction principle}]\label{th3.6}\label{comparison} If $(u_1(t))_{t\in[0,T]}$,\,$\&$\,$(u_2(t))_{t\in[0,T]}$\,are kinetic solutions to \eqref{1.1} with initial data $u_{1,0} $ and $u_{2,0}$ respectively, then for all $t\in[0,T]$,
	\begin{align}\label{contraction principle}
		\mathbb{E}\left\Vert u_1(t)-u_2(t)\right\Vert_{L^{1}(\mathbb{T}^N)}\le\mathbb{E}\left\Vert u_{1,0}-u_{2,0}\right\Vert_{\mathbb{L}^1(\mathbb{T}^N)}.
		\end{align}
	Moreover, let $(u(t))_{t\in[0,T]}$ be a solution to \eqref{1.1}, then there exists a $L^1(\mathbb{T}^N)$-valued process $(u^{-}(t))_{t\in[0,T]}$ such that $\mathbb{P}$-almost surely, for all $t\in[0,T]$, $f^{-}(t)=\mathbbm{1}_{u^{-}(t)\textgreater\xi}$. 
\end{thm}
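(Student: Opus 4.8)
The plan is to pass to the limit in the doubled inequality \eqref{app inequality} of Proposition~\ref{th3.5} along an approximation of the identity, and to check that in this limit each remainder on the right either vanishes or is non-positive. I would take $\theta=\theta_\delta$, with $\theta_\delta(x)=\delta^{-N}\theta_1(x/\delta)$ a symmetric mollifier on $\mathbb{T}^N$, and $\kappa=\kappa_\varepsilon$, with $\kappa_\varepsilon(\xi)=\varepsilon^{-1}\kappa_1(\xi/\varepsilon)$ a mollifier on $\mathbb{R}$, both converging to the Dirac mass. Using the elementary identity $\int_{\mathbb{R}}\mathbbm{1}_{a>\xi}\bigl(1-\mathbbm{1}_{b>\xi}\bigr)\,d\xi=(a-b)^+$, letting $\varepsilon\to0$ and then $\delta\to0$ (with the right representatives $f_i^+=f_i$) turns the left-hand side of \eqref{app inequality} into $\mathbb{E}\int_{\mathbb{T}^N}(u_1(t)-u_2(t))^+\,dx$ and the initial term into $\mathbb{E}\int_{\mathbb{T}^N}(u_{1,0}-u_{2,0})^+\,dx$. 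Hence, once I establish $\limsup(\mathcal{R}_\theta+\mathcal{R}_\kappa+J+K)\le0$ in this double limit, I obtain the one-sided bound $\mathbb{E}\int_{\mathbb{T}^N}(u_1(t)-u_2(t))^+\,dx\le\mathbb{E}\int_{\mathbb{T}^N}(u_{1,0}-u_{2,0})^+\,dx$; exchanging the roles of $u_1,u_2$ and adding then gives \eqref{contraction principle}, of which \eqref{contraction} is a special case.

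I would dispose of the flux and noise remainders first, letting $\varepsilon\to0$ with $\delta$ fixed. On the support of $\kappa_\varepsilon$ one has $|\xi-\zeta|\lesssim\varepsilon$, so \eqref{F.2} gives $|F'(\xi)-F'(\zeta)|\le C(1+|\xi|^{q^*-1})\varepsilon$; since $\nabla\theta_\delta$ is bounded for fixed $\delta$ and the $\xi$-integral is controlled by the moment bound \eqref{2.5}, this yields $\mathcal{R}_\theta=O(\varepsilon)\to0$. For $\mathcal{R}_\kappa$ I would insert \eqref{2.3}: the $|x-y|^2$-contribution is bounded by $C\,t\int_{\mathbb{T}^N}\theta_\delta(z)|z|^2\,dz=O(\delta^2)$, while the $|\xi-\zeta|\,h(|\xi-\zeta|)$-contribution is $\le C\,t\,\varepsilon\,h(\varepsilon)$ on the support of $\kappa_\varepsilon$ and vanishes as $\varepsilon\to0$ because $h(0^+)=0$. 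Thus $\mathcal{R}_\theta+\mathcal{R}_\kappa\to0$ after $\varepsilon\to0$ and then $\delta\to0$.

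The genuinely delicate terms are $J$ (nonlocal) and $K$ (degenerate diffusion), and I expect $J$ to be the main obstacle, exactly as flagged in the introduction. For $J$ I would use the explicit kernel $\mu(z)=|z|^{-N-2\lambda}$ from \eqref{2.4}, decompose the principal value $g_x^\lambda(\theta_\delta(\cdot-y))$ into a near-field $|z|\le r$ and a far-field $|z|>r$, and Taylor-expand $\theta_\delta$ to second order on the near-field (the first-order term dropping by symmetry, the second-order term being integrable against $\mu$ since $\lambda<1$). The key structural observation is that the two measure terms of $J$, built from $\eta_{1,2}$ and $\eta_{2,2}$, reproduce exactly the nonlocal dissipation generated when $(-\Delta)^\lambda$ acts across the doubled kinetic functions; since $\eta_{1,2},\eta_{2,2}$ carry the same kernel $\mu$, the near- and far-field contributions of the first term of $J$ cancel against them after the $\varepsilon\to0$ limit (in which $\partial_\xi\kappa_\varepsilon$ acts as a derivative of the Dirac mass), leaving a non-positive remainder. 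The diffusion term $K$ is treated by the Chen--Perthame mechanism: writing $A=\sigma^2$, using the chain-rule property \eqref{chain} and the H\"older bound \eqref{holder}, the quadratic form $(A(\xi)+A(\zeta)):D^2_x\theta_\delta$ combines with the parabolic dissipation measures $\eta_{1,3},\eta_{2,3}$ in the two measure terms of $K$; the cross terms cancel and the remaining $(\sigma(\xi)-\sigma(\zeta))^2$-contribution vanishes in the limit thanks to $\gamma>\tfrac12$, so that $\limsup K\le0$.

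Collecting these estimates proves \eqref{contraction principle}. For the final assertion I would rerun the doubling argument with $u_1=u_2=u$ and the left representatives $f_1^-=f_2^-=f^-$, now invoking the pathwise inequality \eqref{app inequality1}. Here the initial term tends to $\int_{\mathbb{T}^N}\int_{\mathbb{R}}f_0\bigl(1-f_0\bigr)\,d\xi\,dx=0$ and the remainders are controlled as above, so in the double limit one gets, $\mathbb{P}$-a.s. and for every $t$, $\int_{\mathbb{T}^N}\int_{\mathbb{R}}f^-(x,t,\xi)\bigl(1-f^-(x,t,\xi)\bigr)\,d\xi\,dx\le0$. As the integrand is non-negative and $f^-(x,t,\cdot)$ is non-increasing with values in $[0,1]$, it must be an indicator; hence $f^-(t)$ is an equilibrium, and setting $u^-(x,t)=\int_{\mathbb{R}}\zeta\,d\mathcal{V}^-_{x,t}(\zeta)$ --- which lies in $L^1(\mathbb{T}^N)$ by \eqref{3.3} --- yields $f^-(t)=\mathbbm{1}_{u^-(t)>\xi}$, as claimed.
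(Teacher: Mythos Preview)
Your overall plan---apply Proposition~\ref{th3.5} with mollifiers and drive the remainders to zero---is exactly the paper's, but the sequential limit you propose (kinetic parameter $\varepsilon\to0$ first, spatial parameter $\delta\to0$ second) does not close, because your bound on $\mathcal{R}_\kappa$ is off by a factor of $\varepsilon^{-1}$. Since the Young measures here are Diracs, $\int_{\mathbb{R}^2}\kappa_\varepsilon(\xi-\zeta)\,d\mathcal{V}^1_{x,s}(\xi)\,d\mathcal{V}^2_{y,s}(\zeta)=\kappa_\varepsilon(u_1(x,s)-u_2(y,s))$, which in general is only $O(\varepsilon^{-1})$; hence the $|x-y|^2$-piece of $\mathcal{R}_\kappa$ is $O(\delta^2\varepsilon^{-1})$, not $O(\delta^2)$. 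With $\delta$ fixed this blows up as $\varepsilon\to0$. The paper handles this by \emph{coupling} the two scales: in its notation (spatial $\epsilon$, kinetic $\delta$) one has $|\mathcal{R}_\theta|\lesssim\epsilon^{-1}\delta$, $\mathcal{R}_\kappa\lesssim\delta^{-1}\epsilon^2+h(\delta)$, $|K|\lesssim\delta^{2\gamma}\epsilon^{-2}$, and then sets $\delta=\epsilon^{a}$ with $a\in(1/\gamma,2)$---precisely where $\gamma>\tfrac12$ is used---so that every error term vanishes simultaneously as $\epsilon\to0$.

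Your treatment of $J$ is also too loose. You rely on sending $\varepsilon\to0$ so that $\partial_\xi\kappa_\varepsilon$ ``acts as a derivative of the Dirac mass'' and produces cancellation, but $\partial_\xi\kappa_\varepsilon$ has no uniform bound and the measure terms are not obviously controlled in that limit. The paper instead proves $J\le0$ \emph{before} any limit is taken: writing $J=J_1+J_2$ and computing the $\eta_{i,2}$-integrals explicitly, one finds $J_2=-J_1+I_1$, where
\[
I_1=-\tfrac12\int_0^t\!\!\int_{(\mathbb{T}^N)^2}\!\!\int_{\mathbb{R}^N}\bigl[\tau_z\Phi_\delta(u_1-u_2)-\Phi_\delta(u_1-u_2)\bigr]\bigl[\tau_z(u_1-u_2)-(u_1-u_2)\bigr]\theta_\epsilon\,\mu(z)\,dz\,dx\,dy\,ds\le0,
\]
using only that the antiderivative $\Phi_\delta(w)=\int_{-\infty}^{w}\kappa_\delta$ is non-decreasing. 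This pointwise sign is what makes the nonlocal term harmless; no near/far-field splitting is needed here. With these two fixes---coupled parameters and the algebraic identity for $J$---your argument goes through; the second assertion about $f^-$ is then exactly as you wrote.
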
 
	\begin{proof} Let $(\theta_{\epsilon}),\,(\kappa_{\delta})$ be  approximations to the identity on $\mathbb{T}^N$ and $\mathbb{R}$, respectively, that is, $\theta\in C^\infty(\mathbb{T}^N)$ and $\kappa\in C_c^\infty (\mathbb{R})$ be symmetric non negative functions such that $\int_{\mathbb{T}^N} \theta(x) dx = 1 ,\,\, \int_{\mathbb{R}} \kappa(\xi) d\xi =1$ and supp$\kappa\subset(-1,1)$. We define
	$\theta_{\epsilon}=\frac{1}{\epsilon^N}\theta(\frac{x}{\epsilon}),$ and
	$\kappa_{\delta}(\xi)=\frac{1}{\delta}\kappa(\frac{\xi}{\delta})$. Then we follow proof of \cite[Theorem 3.2]{sylvain} to conclude 
	\begin{align*}
		\mathbb{E}\int_{\mathbb{T}^N}\int_{\mathbb{R}}f_1(x,t,\xi)\bar{f}_2(x,t,\xi)d\xi dx =\mathbb{E}\int_{(\mathbb{T}^N)^2}\int_{\mathbb{R}^2}\theta_{\epsilon}(x-y)\kappa_{\delta}(\xi-\zeta)f_1(x,t,\xi)\bar{f}_2(y,t,\zeta)d\xi d\zeta dx dy+\eta_{t}(\epsilon,\delta)\notag
	\end{align*}
	where $ \lim_{\epsilon,\delta\to 0}\eta_t(\epsilon,\delta)=0.$ 
	With regard to Proposition \ref{th3.5}, we need to find suitable bounds for terms I, J, K.
	We shall estimate each terms in the following several steps.
	
	\noindent
	\textbf{Step 1:}
	It follows from proof of \cite[Theorem 15]{Debussche} that $\p$-almost surely,
	\begin{align}\label{estimate1}
		|\mathcal{R}_\theta|\leq C'(\omega) \epsilon^{-1}\delta.
	\end{align}
where $C'(\omega)=\sup_{t\in[0,T]}\|u_1(w,t)\|_{L^{q^*}(\mathbb{T}^3)}^{q^*}+\|u_2(w,t)\|_{L^{q^*}(\mathbb{T}^3)}^{q^*}$. 

\noindent
	\textbf{Step 2:}
	In order to estimate the term J, we observe that
	\begin{align*}
		J&=-2\int_0^t\int_{(\mathbb{T}^N)^2}\int_{\mathbb{R}^2}f_1(x,s,\xi)\bar{ f}_2(y,s,\zeta)\kappa_{\delta}(\xi-\zeta)g_x^\lambda(\theta_{\epsilon}(x-y)))d\xi d\zeta dx dy ds\notag\\
		&\qquad+\int_0^t\int_{(\mathbb{T}^N)^2}\int_{\mathbb{R}^2}\partial_\zeta\kappa_{\delta}(\xi-\zeta)\theta_{\epsilon}(x-y)f_1(x,s,\xi)d\eta_{2,1}(y,s,\zeta)dx d\xi \notag\\
		&\qquad-\int_0^t\int_{(\mathbb{T}^N)^2}\int_{\mathbb{R}^2}\partial_\xi\kappa_{\delta}(\xi-\zeta)\theta_{\epsilon}(x-y)\bar{f}_2(y,s,\zeta)d\eta_{1,1}(x,s,\xi)dy d\zeta\notag\\
		&:=J_1+J_2,
	\end{align*}
	where

	\begin{align*}
			J_1&=-2\int_0^t\int_{(\mathbb{T}^N)^2}\int_{\mathbb{R}^2}f_1(x,s,\xi)\bar{ f}_2(y,s,\zeta)\kappa(\xi-\zeta)g_x^\lambda(\theta(x-y)))d\xi d\zeta dx dy ds,\\
		J_2&=\int_0^t\int_{(\mathbb{T}^N)^2}\int_{\mathbb{R}^2}\partial_\zeta\kappa_{\delta}(\xi-\zeta)\theta_{\epsilon}(x-y)f_1(x,s,\xi)d\eta_{2,1}(y,s,\zeta)dx d\xi \notag\\
		&\qquad-\int_0^t\int_{(\mathbb{T}^N)^2}\int_{\mathbb{R}^2}\partial_\xi\kappa_{\delta}(\xi-\zeta)\theta_{\epsilon}(x-y)\bar{f}_2(y,s,\zeta)d\eta_{1,1}(x,s,\xi)dy d\zeta.
	\end{align*}
	Now, we will write term $J_2$ in terms of $J_1$ and plus other term which has sign. Indeed,
	\begin{align*}
		&\int_0^t\int_{(\mathbb{T}^N)^2}\int_{\mathbb{R}^2}\partial_\xi\kappa_{\delta}(\xi-\zeta)\theta_{\epsilon}(x-y)\bar{f}_2(y,s,\zeta)d\eta_{1,1}(x,s,\xi)dy d\zeta\\
		&\quad=\int_0^t\int_{({\mathbb{T}^N})^2}\int_{\mathbb{R}^2}\int_{\mathbb{R}^N}|\tau_z u_1(x,s)-\xi|\mathbbm{1}_{Con\{u_1(x,s),\tau_z u_1(x,s)\}}(\xi) \partial_{\xi} \kappa_\delta(\xi-\zeta) \\
		&\qquad\qquad\qquad\qquad\qquad\qquad\qquad\qquad\qquad\qquad\qquad\qquad\quad\theta_{\epsilon}(x-y)\bar{f}_2(y,s,\zeta) \mu(z) dz d\xi d\zeta dx dy ds\\
		&\quad=\int_0^t\int_{(\mathbb{T}^N)^2}\int_{\mathbb{R}^{N+1}} \bigg\{\int_{u_1(x,s)}^{\tau_z u_1(x,s)}(\tau_z u_1(x,s)-\xi) \partial_{\xi} \kappa_\delta(\xi-\zeta) d\xi \\
		&\qquad\qquad\qquad\quad+\int_{\tau_z u_1(x,s)}^{u_1(x,s)}(\xi-\tau_z u_1(x,s))\partial_{\xi} \kappa_\delta(\xi-\zeta) d\xi \bigg\}\theta_\epsilon(x-y)\bar{f}_2(y,s,\zeta) \mu(z) dz d\zeta dx dy ds\\
		&\quad=\int_0^t\int_{\mathbb{R}^{N+1}}\int_{\mathbb{T}^N}\Bigg\{ \int_{\mathbb{T}^N\,\cap\,\{u_1(x,s)\,\le\,\tau_z u_1(x,s)\} }\bigg\{\big[(\tau_z u_1(x,s)-\xi)\kappa_\delta(\xi-\zeta)\big]_{u_1(x,s)}^{\tau_z u_1(x,s)}\\
		&\qquad\qquad+\int_{u_1(x,s)}^{\tau_z u_1(x,s)}\kappa_\delta(\xi-\zeta) d\xi \bigg\}+\int_{\mathbb{T}^N\cap\{\tau_zu_1(x,s)\le u_1(x,s)\}}\bigg\{\big[\xi-\tau_z u_1(x,s) \kappa_\delta(\xi-\zeta)\big]_{\tau_z u_1(x,s)}^{u_1(x,s)}\\
		&\qquad\qquad\qquad\qquad\qquad\qquad\qquad\quad-\int_{\tau_z u_1(x,s)}^{u_1(x,s)} \kappa_\delta (\xi-\zeta) d\xi \bigg\}\Bigg\} \theta_\epsilon(x-y)\bar{f}_2(y,s,\zeta) \mu(z) dz d\zeta dx dy ds\\
		&\quad=-\int_0^t\int_{({\mathbb{T}^N})^2}\int_{\mathbb{R}^{N+1}}\bigg\{(\tau_z u_1(x,s)- u_1(x,s))\kappa_{\delta}(u_1(x,s)-\zeta)\\
		&\qquad\qquad\qquad\qquad\qquad\qquad\qquad\quad+\int_{u_1(x,s)}^{\tau_z u_1(x,s)}\kappa_{\delta}(\xi-\zeta) d\xi \bigg\} \theta_\epsilon(x-y)\bar{f}_2(y,s,\zeta) \mu(z) dz d\zeta dx dy ds\\
		&\quad=-\int_0^t\int_{(\mathbb{T}^N)^2}\int_{\mathbb{R}^{N+1}}(\tau_z u_1(x,s)-u_1(x,s))\kappa_{\delta}(u_1(x,s)-\zeta)\theta_\epsilon(x-y)\bar{ f}_2(y,s,\zeta)\mu(z) dz d\zeta dx dy ds\\
		&\quad\quad+\int_0^t\int_{(\mathbb{T}^N)^2}\int_{\mathbb{R}^{N+1}}\int_{\mathbb{R}}\bigg\{\int_{-\infty}^{\tau_z u_1(x,s)}\kappa_{\delta}(\xi-\zeta)d\xi -\int_{-\infty}^{u_1(x,s)}\kappa_{\delta}(\xi-\zeta) d\xi \bigg\}\\
		&\qquad\qquad\qquad\qquad\qquad\qquad\qquad\qquad\qquad\qquad\qquad\qquad\qquad\quad\theta_\epsilon(x-y)\bar{ f}_2(y,s,\zeta) \mu(z) dz d\zeta dx dy ds\\
		&\quad=-\int_0^t\int_{(\mathbb{T}^N)^2}\int_{\mathbb{R}^N}\int_{\mathbb{R}}(\tau_z u_1(x,s)-u_1(x,s))\kappa_{\delta}(u_1(x,s)-\zeta)\theta_\epsilon(x-y)\bar{ f}_2(y,s,\zeta)\mu(z) dz d\zeta dx dy ds\\
		&\qquad+\int_0^t\int_{(\mathbb{T}^N)^2}\int_{\mathbb{R}^N}\int_{\mathbb{R}}\bigg(\int_{-\infty}^{ u_1(x,s)}\kappa_{\delta}(\xi-\zeta)d\xi\bigg) (\tau_z\theta_\epsilon(x-y)-\theta_\epsilon(x-y))\bar{ f}_2(y,s,\zeta) \mu(z) dz d\zeta dx dy ds\\
		&\quad=-\int_0^t\int_{(\mathbb{T}^N)^2}\int_{\mathbb{R}^N}\int_{\mathbb{R}}(\tau_z u_1(x,s)-u_1(x,s))\kappa_{\delta}(u_1(x,s)-\zeta)\theta_\epsilon(x-y)\bar{ f}_2(y,s,\zeta)\mu(z) dz d\zeta dx dy ds\\
		&\qquad-\int_0^t\int_{(\mathbb{T}^N)^2}\int_{\mathbb{R}^2} f_1(x,s,\xi)\kappa_{\delta}(\xi-\zeta) g_x^\lambda(\theta_\epsilon)(x-y) \bar{ f}_2(y,s,\zeta)\mu(z) dz d\xi d\zeta dx dy ds,
	\end{align*}
	where interchange of integration is justified by the fact that, $\mathbb{P}$-almost surely,
	\begin{align*}
		\int_0^t\int_{(\mathbb{T}^N)^2}\int_{\mathbb{R}^{N+2}} |\partial_{\xi}\kappa_{\delta}(\xi-\zeta)\theta_{\epsilon}(x-y)\bar{ f}_2(y,s,\zeta)\eta_{1,1}|& \mu(z)dz d\xi d\zeta dx dy ds\le\, C \|\eta_{1,1}\|_{L^1(\mathbb{T}^N\times\mathbb{R}\times[0,T])}.
	\end{align*}
	Similarly, for remaining term we have
	\begin{align*}
		&\int_0^t\int_{({\mathbb{T}^N})^2}\int_{\mathbb{R}^2} \partial_{\zeta} \kappa_{\delta}(\xi-\zeta) \theta_\epsilon(x-y) f_1(x,s,\xi) d\eta_{2,1}(y,s,\zeta) dx d\xi\\
		&=-\int_0^t\int_{(\mathbb{T}^N)^2}\int_{\mathbb{R}^N}\int_{\mathbb{R}}(\tau_z u_2(y,s)-u_2(y,s))\kappa_{\delta}(\xi-u_2(x,s))\theta_\epsilon(x-y) f_1(x,s,\xi)\mu(z) dz d\xi dx dy ds\\
		&\qquad+\int_0^t\int_{(\mathbb{T}^N)^2}\int_{\mathbb{R}^2} f_1(x,s,\xi)\kappa_{\delta}(\xi-\zeta) g_x^\lambda(\theta_\epsilon)(x-y) \bar{ f}_2(y,s,\zeta)\mu(z) dz d\xi d\zeta dx dy ds.
	\end{align*}
It conclude that
	\begin{align*}
		&J_2=-\int_0^t\int_{(\mathbb{T}^N)^2}\int_{\mathbb{R}}\int_{\mathbb{R}^N}\big(\tau_z u_2(y,s)-u_2(y,s))\big)(\kappa_{\delta} (\xi-u_2(y,s))\theta_{\epsilon}(x-y))f_1(x,s,\xi)\mu(z)dz dx dy d\xi ds\notag\\
		&\qquad+\int_0^t\int_{(\mathbb{T}^N)^2}\int_{\mathbb{R}^2}\bar{f}_2(y,s,\zeta) \kappa_{\delta}(\xi-\zeta)g_x^\lambda(\theta_{\epsilon})(x-y)f_1(x,s,\xi)\mu(z)dz dx dy d\zeta d\xi ds\\&\qquad+\int_0^t\int_{(\mathbb{T}^N)^2}\int_{\mathbb{R}}\int_{\mathbb{R}^N}\big(\tau_z u_1(x,t)-u_1(x,s))\big)\kappa_{\delta} (u_1(x,s)-\zeta)\theta_{\epsilon}(x-y)\bar{f}_2(y,t,\zeta)\mu(z)dz dx dy d\zeta ds\notag\\
		&\qquad+\int_0^t\int_{(\mathbb{T}^N)^2}\int_{\mathbb{R}}\int_{\mathbb{R}^N}f_1(x,s,\xi)\kappa_{\delta}(\xi-\zeta)g_x^\lambda(\theta_\epsilon)(x-y)\bar{f}_2(y,s,\zeta)\mu(z)dz dx dy d\zeta d\xi ds \\
		&:=-J_1+I_1
	\end{align*}
	where
	\begin{align*}
		I_1&=-\int_0^t\int_{(\mathbb{T}^N)^2}\int_{\mathbb{R}}\int_{\mathbb{R}^N}\big(\tau_z u_2(y,s)-u_2(y,s))\big)(\kappa_{\delta} (\xi-u_2(y,s))\theta_{\epsilon}(x-y))f_1(x,s,\xi)\mu(z)dz dx dy d\xi ds\notag\\
		&\qquad+\int_0^t\int_{(\mathbb{T}^N)^2}\int_{\mathbb{R}}\int_{\mathbb{R}^N}\big(\tau_z u_1(x,t)-u_1(x,s))\big)\kappa_{\delta} (u_1(x,s)-\zeta)\theta_{\epsilon}(x-y)\bar{f}_2(y,t,\zeta)\mu(z)dz dx dy d\zeta ds\notag\\
		&=\int_0^t\int_{(\mathbb{T}^N)^2}\int_{\mathbb{R}}\int_{\mathbb{R}^N}(\tau_z u_1(x,s)-u_1(x,s))\kappa_{\delta}(u_1(x,s)-\zeta)\theta_{\epsilon}(x-y)\bar{f}_2(y,s,\zeta)\mu(z)dz d\zeta dx dy ds\notag\\
		&\qquad-\int_0^t\int_{(\mathbb{T}^N)^2}\int_{\mathbb{R}}\int_{\mathbb{R}^N}(\tau_z u_2(y,s)-u_2(y,s))\kappa_{\delta}(\xi-u_2(y,s))\theta_{\epsilon}(x-y) f_1(x,s,\xi)\mu(z) dz d\xi dx dy ds\notag\\
		&=\int_0^t\int_{(\mathbb{T}^N)^2}\int_{\mathbb{R}^N}(\int_{u_2(y,s)}^{+\infty}\kappa_{\delta}(u_1(x,s)-\zeta)d\zeta)(\tau_z u_1(x,s)-u_1(x,s))\theta_{\epsilon}(x-y)\mu(z)dz dx dy ds\notag\\
		&\qquad-\int_0^t\int_{(\mathbb{T}^N)^2}\int_{\mathbb{R}^N}(\int_{-\infty}^{u_1(x,s)}\kappa_{\delta}(\xi-u_2(y,s))d\xi)(\tau_z u_2(y,s)-u_2(y,s))\theta_{\epsilon}(x-y)\mu(z) dz dx dy ds\notag\\
		&=\int_0^t\int_{(\mathbb{T}^N)^2}\int_{\mathbb{R}^N}\Phi_{\delta}(u_1(x,s)-u_2(y,s))[(\tau_z(u_1(x,s)\\
		&\qquad\qquad\qquad\qquad\qquad\qquad\qquad\qquad\qquad-u_2(y,s))-(u_1(x,s)-u_2(y,s))]\theta_{\epsilon}(x-y)\mu(z)dz dx dy ds\notag\\
		&=-\frac{1}{2}\int_0^t\int_{(\mathbb{T}^N)^2}\int_{\mathbb{R}^N}[\tau_z\Phi_{\delta}(u_1(x,s)-u_2(y,s))-\Phi_{\delta}(u_1(x,s)-u_2(y,s))]\\
		&\qquad\qquad\qquad\qquad\qquad\qquad\qquad[\tau_z(u_1(x,s)-u_2(y,s))-(u_1(x,s)-u_2(y,s))]\theta_{\epsilon}(x-y)\mu(z)dz dx dy \notag\\
		&\le0,\notag
	\end{align*}
	where $\Phi_{\delta}(w)=\int_{-\infty}^w\kappa_\delta(\xi)d\xi$ and note that $\Phi_{\delta}$ is non decreasing.
	Finally it shows clearly, $\p$-almost surely,
	\begin{align}\label{estimate2}
		J\,\le\,0.
	\end{align}
	\textbf{Step 3:}
	In order to estimate the term K, we  closely follow proof of \cite[Theorem 3.3]{vovelle}. We observe that
	\begin{align}
		K&=\mathbb{E}\int_0^t\int_{(\mathbb{T}^N)^2}\int_{\mathbb{R}^2} f_1 \bar{f}_2(\sigma(\xi)-\sigma(\zeta))^2:D^2_x\theta_{\epsilon}(x-y)\phi_{\delta}(\xi-\zeta)d\xi d\zeta dxdyds\notag\\
		&\qquad+2\mathbb{E}\int_0^t\int_{(\mathbb{T}^N)^2}\int_{\mathbb{R}^2}f_1\bar{f}_2\sigma(\xi)\sigma(\zeta):D^2_x\theta_{\epsilon}(x-y)\phi_{\delta}(\xi-\zeta)d\xi d\zeta dx dy ds\notag\\
		&\qquad-\mathbb{E}\int_0^t\int_{(\mathbb{T}^N)^2}\int_{\mathbb{R}^2}\theta_{\epsilon}(x-y)\phi_{\delta}(\xi-\zeta)d\mathcal{V}_{x,s}^1dxd\eta_{2,3}(y,x,\zeta)\notag\\
		&\qquad-\mathbb{E}\int_0^t\int_{(\mathbb{T}^N)^2}\int_{\mathbb{R}^2}\theta_{\epsilon}(x-y)\phi(\xi-\zeta)d\mathcal{V}_{y,s}^2(\zeta)dy d\eta_{1,3}(x,s,\xi)\notag\\
		&=K_1+K_2+K_3+K_4\notag
	\end{align}
	
	Since $\sigma$ is locally $\gamma$-H\"{o}lder continuous due to \eqref{holder}, it holds
	$$|K_1|\le C t\delta^{2\gamma}\epsilon^{-2}$$
	From the definition of the parabolic dissipative measure in Definition \ref{kinetic solution}, we have $K_2+K_3+K_4\le0$ (for detials see \cite[Theorem 3.3]{vovelle}). It conclude that $\p$-almost surely,
	\begin{align}\label{estimate3}
			|K|\le C(\omega) t\delta^{2\gamma}\epsilon^{-2}
	\end{align}

\noindent
	\textbf{Step 4:} In this step we estimate It\^o correction terms as follows:
	\begin{align}\label{estimate4}
		\mathcal{R}_{\kappa}&\le D_1\,\int_0^t\int_{(\mathbb{T}^N)^2}\theta_{\epsilon}(x-y)|x-y|^2\int_{\mathbb{R}^2}\kappa_{\delta}(\xi-\zeta)d\mathcal{V}_{x,s}^1d\mathcal{V}_{y,s}^2(\zeta)dx dy ds\notag\\
		&\qquad+D_1\,\int_0^t\int_{(\mathbb{T}^N)^2}\theta_{\epsilon}(x-y)\notag\times\int_{\mathbb{R}^2}\kappa_{\delta}(\xi-\zeta)|\xi-\zeta|h(|\xi-\zeta|)d\mathcal{V}_{x,s}^1(\xi) d\mathcal{V}_{y,s}^2(\zeta)dx dy ds\notag\\
		&\le D_1t\delta^{-1}\epsilon^2+D_1 t h(\delta).
	\end{align}
	\textbf{Step 5:}
	As a consequence of previous steps, we deduce for all $t\in[0,T]$,
	\begin{align*}
		\begin{aligned}
		\mathbb{E}\int_{\mathbb{T}^N}\int_{\mathbb{R}}f_{1}(x,t,\xi)f_2(x,t,\xi)d\xi dx
		&\le\mathbb{E}\int_{(\mathbb{T}^N)^2}\int_{\mathbb{R}^2}\theta_{\epsilon}(x-y)\phi_{\delta}(\xi-\zeta)f_{1,0}\bar{f}_{2,0}(y,\zeta)d\xi d\zeta dx dy \\&\qquad+C_T\big(\delta\epsilon^{-1}+\delta^{-1}\epsilon^2+h(\delta)+\delta^{2\gamma}\epsilon^{-2}\big)+\eta_t(\epsilon,\delta).\notag
		\end{aligned}
	\end{align*}
	Taking $\delta=\epsilon^a$ with $a\in(\frac{1}{\gamma},2)$ and letting $\epsilon\,\to\,0$ yields, for all $t\in[0,T]$
	\begin{align}\label{contraction principle 1}
		\mathbb{E}\int_{\mathbb{T}^N}\int_{\mathbb{R}}f_1(x, t, \xi)\bar{f}_2(x,t,\xi)d\xi dx\le\mathbb{E}\int_{\mathbb{T}^N}\int_{\mathbb{R}}f_{1,0}(x,
		\xi)\bar{f}_{2,0}(x,\xi)d\xi dx.
		\end{align}
	Since $$\int_{\mathbb{R}}\mathbbm{1}_{u_1(t)\textgreater\xi}\bar{\mathbbm{1}}_{u_2(t)\textgreater\xi}d\xi=(u_1(t)-u_2(t))^{+}.$$
	It implies that contraction principle \eqref{contraction principle} holds.
	
	\noindent For second remaining part, making use of inequality \eqref{app inequality1} and pathwise estimates \eqref{estimate1}-\eqref{estimate4}, we can similarly conclude  that $\p$-almost surely, for all $t\in[0,T]$,
	 \begin{align}\label{contraction principle 2}
		\int_{\mathbb{T}^N}\int_{\mathbb{R}}f^{-}(x,t,\xi)\bar{f}^{-}(x,t,\xi)d\xi dx\,\le\,\int_{\mathbb{T}^N}\int_{\mathbb{R}}f_{0}(x,\xi)\bar{f}_{0}(x,\xi)d\xi dx.
	\end{align}
	We have the identity $f_0(x,\xi)\bar{f}_0(x,\xi)=0$ and therefore, $\p$-almost surely, for all $t\in[0,T]$, $f^{-}(x,t,\xi)(1-f^{-}(x,t,\xi))=0$ a.e. $(x,\xi)$. The fact that $f^{-}$ is a kinetic function and then Fubini's theorem imply that, $\p$-almost surely, for all $t\in[0,T]$, there exists a set $S\subset\mathbb{T}^N$ of full measure such that, for $x\in S,f^{-}(x,t,\xi)\in\{0,1\}$ for a.e. $\xi\in\mathbb{R}$. Therefore, for all $t\in [0,T]$, there exists $u^{-}(t):\Omega\to L^1(\mathbb{T}^N)$ such that $\p$-almost surely, for all $t\in[0,T]$ $f^{-}(t)=\mathbbm{1}_{u^{-}(t)\textgreater\xi}$. 
	\end{proof}
\noindent
\textbf{Continuity in time:}\label{section 4.1}
As a consequence, we obtain the continuity of trajectories in $L^p(\mathbb{T}^N)$ whose proof is similar to the proof of  \cite[Corollary 3.3]{sylvain}.
\begin{cor}\label{continuity1}
	Let $u_0\in L^p(\Omega;L^p(\mathbb{T}^N))$ for all $p\in [1,+\infty)$. Then, the solution $(u(t))_{t\in[0,T]}$ to \eqref{1.1} with initial datum $u_0$ has $\p$-almost surely continuous trajectories in $L^p(\mathbb{T}^N)$.
\end{cor}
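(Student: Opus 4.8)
The plan is to establish continuity by combining the right-continuity that is already in hand with a symmetric left-continuity statement, and then to rule out jumps by comparing the solution with itself through the contraction estimate. I would work pathwise on the full-measure set $\Omega_1$ of Proposition \ref{th3.1}, so that $t\mapsto\langle f(t),\varphi\rangle$ is c\'adl\'ag for every test function and the left limit $f^-(t)$ is available.

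First, right-continuity of $s\mapsto u(s)$ into $L^p(\mathbb{T}^N)$, for every $p\in[1,\infty)$, is precisely Corollary \ref{continuity}. For the left limits I would argue dually. Fix $t\in(0,T]$ and a sequence $s_n\uparrow t$. By \eqref{3.1} the kinetic functions $f(s_n)$ converge weak-$*$ in $L^\infty(\mathbb{T}^N\times\mathbb{R})$ to $f^-(t)$, while the uniform moment bound \eqref{3.3} (obtained from \eqref{2.5} via Fatou) furnishes the hypothesis of Lemma \ref{th2.8} for some $q>1$. Since Theorem \ref{th3.6} identifies $f^-(t)=\mathbbm{1}_{u^-(t)>\xi}$ as an equilibrium with $u^-(t)\in L^1(\mathbb{T}^N)$, Lemma \ref{th2.8} upgrades this to strong convergence $u(s_n)\to u^-(t)$ in $L^p(\mathbb{T}^N)$ for all $1\le p<q$; the arbitrariness of the sequence gives $u(s)\to u^-(t)$ as $s\uparrow t$, and the bound \eqref{2.5} extends the convergence to every $p\in[1,\infty)$. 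Thus $t\mapsto u(t)$ is c\'adl\'ag in $L^p(\mathbb{T}^N)$ with left trace $u^-(t)$.

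It remains to show $u^-(t)=u(t)$ for all $t$, $\p$-almost surely, which turns c\'adl\'ag into continuous. Here I would apply the doubling-of-variables machinery of Proposition \ref{th3.5} and Theorem \ref{th3.6} to the single solution $u$ compared with itself, pairing the right representative $f^+(t)=f(t)$ with the left limit $f^-(t)$. Because both representatives issue from the same initial datum $u_0$, the boundary term on the right-hand side collapses to $\int_{\mathbb{T}^N}\int_{\mathbb{R}}f_0\bar f_0\,d\xi\,dx=0$, while the remainders $\mathcal{R}_\theta,\mathcal{R}_\kappa,J,K$ vanish in the limit $\epsilon,\delta\to0$ exactly as in Steps 1--4 of the contraction proof; this forces $\mathbb{E}\int_{\mathbb{T}^N}\int_{\mathbb{R}}\big(f^+(t)\bar f^-(t)+f^-(t)\bar f^+(t)\big)\,d\xi\,dx=0$. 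Using the identity $\int_{\mathbb{R}}(f^+\bar f^-+f^-\bar f^+)\,d\xi=|u(t)-u^-(t)|$, I conclude $\mathbb{E}\,\|u(t)-u^-(t)\|_{L^1(\mathbb{T}^N)}=0$, hence $u(t)=u^-(t)$ and the trajectory has no jumps. The main obstacle is the legitimacy of this self-comparison at a fixed time together with the vanishing of the error terms; once granted, continuity in $L^p(\mathbb{T}^N)$ for all $p\in[1,\infty)$ follows by collecting the three steps, with the nonlocal operator $g_x^\lambda$ and the degenerate diffusion intervening only through the nonnegative measure $m\ge\eta_1+\eta_2$ and hence not affecting this temporal argument.
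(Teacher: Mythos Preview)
Your overall strategy is exactly the one the paper has in mind (it simply points to \cite[Corollary~3.3]{sylvain}): right-continuity from Corollary~\ref{continuity}, left limits in $L^p$ via Lemma~\ref{th2.8} once Theorem~\ref{th3.6} has shown $f^{-}(t)=\mathbbm{1}_{u^{-}(t)>\xi}$, and then ruling out jumps by self-comparison.

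There is, however, a genuine gap in your third step. From the mixed doubling argument you only extract
\[
\mathbb{E}\int_{\mathbb{T}^N}\int_{\mathbb{R}}\big(f^{+}(t)\bar f^{-}(t)+f^{-}(t)\bar f^{+}(t)\big)\,d\xi\,dx=0,
\]
i.e.\ $\mathbb{E}\|u(t)-u^{-}(t)\|_{L^1(\mathbb{T}^N)}=0$ for each fixed $t$. This says that for every $t$ one has $u(t)=u^{-}(t)$ $\mathbb{P}$-a.s., which is \emph{not} the same as ``$\mathbb{P}$-a.s., for all $t$, $u(t)=u^{-}(t)$''. A c\'adl\'ag path can certainly have a jump at a random time $\tau$ while still satisfying $\mathbb{P}(u(t)\neq u^{-}(t))=0$ for every deterministic $t$. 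So from the expectation bound alone you cannot conclude that the trajectories are continuous.

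The cure is already present in the paper: use the \emph{pathwise} self-comparison, exactly as in the Remark following Proposition~\ref{th3.5} and in the proof of the second part of Theorem~\ref{th3.6}. When $f_1=f_2=f$, the martingale contributions cancel after mollification regardless of whether one pairs $(+,+)$, $(-,-)$ or the mixed $(+,-)$; running the proof of Proposition~\ref{th3.5} with $f^{+}$ against $\bar f^{-}$ yields, $\mathbb{P}$-a.s., for all $t\in[0,T]$,
\[
\int_{\mathbb{T}^N}\int_{\mathbb{R}} f^{+}(x,t,\xi)\bar f^{-}(x,t,\xi)\,d\xi\,dx\le \int_{\mathbb{T}^N}\int_{\mathbb{R}} f_0\bar f_0\,d\xi\,dx=0,
\]
and symmetrically for $f^{-}\bar f^{+}$. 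This gives $u(t)=u^{-}(t)$ for all $t$ on a set of full probability, and your c\'adl\'ag-to-continuous upgrade then goes through. Equivalently, via \eqref{3.5} this forces $m(\partial_\zeta\varphi)(\{t\})=0$ for all $t$, $\mathbb{P}$-a.s., which is how Remark~\ref{remark 4.3} records the absence of atomic times.
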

\begin{remark}[No atomic point]\label{remark 4.3} Since for all $t\in[0,T]$, $\p$-almost surely, $f(t)=f^{-}(t)$ (cf.\cite[Proposition 2.11]{sylvain}), then identity \eqref{3.5} implies that set of atomic point, ${B}_{at}=\{t\in[0,T];{\mathbb{P}}(\pi_\# {m}(\{t\})\textgreater 0)\textgreater 0\}$, is empty.
\end{remark}
\section{Proof of existence part of Theorem \ref{main theorem 1}}\label{Sec4}
In this section, first we prove the existence part of Theorem \ref{th2.10} for the initial condition $u_0\in L^p(\Omega;C^\infty (\mathbb{T}^N)).$
Here we apply the vanishing viscosity method, while using also some appropriately choosen approximations $F^\tau$ of $F$. These equations have weak solutions and consequently passage to the limit gives the existence of a kinetic solution to the original equation \eqref{1.1}. Neverthless, the limit argument is quite techincal and has to be done in several steps. 

Consider a truncation $(\mathcal{S}_\tau)$ on $\mathbb{R}$ and approximations $(\kappa_\tau)$  to the identity on $\mathbb{R}$. 
The regularization of $F$ is then defined in the following way
$$F_i^\tau(\zeta)= \big((F_i * \kappa_\tau)\mathcal{S}_\tau\big)(\zeta)\,\,\,\, i=1,...,N,$$
Consequently, we set $F^\tau=(F_1^\tau,...,F_N^\tau)$. It is clear that approximations $F^\tau$ is of class $C^\infty$ with the compact support therefore Lipschitz continuous. Also the polynomial growth of $F$ remains valid for $F^\tau$ and holds uniformly in $\tau$. 
\begin{align}\label{4.1}
	du^\tau(x,t)+\mbox{div}(F^\tau(u^\tau(x,t))&)d t +g_x^\lambda[u^\tau(x,t)]dt \notag \\
	&=\mbox{div}\big(A(u^\tau)\nabla u^\tau\big)+\tau\Delta u^\tau+ \Phi(x,u^\tau(x,t))dW(t)\,\, x \in \mathbb{T}^N,\,\, t \in(0,T).\\
	u^\tau(0)&=u_0.\notag
\end{align} 
There exists a unique weak solution $u^\tau$ to \eqref{4.1} such that
$$u^\tau\in L^2(\Omega;C([0,T];L^2(\mathbb{T}^N)))\cap L^2(\Omega;L^2(0,T;H^1(\mathbb{T}^N))).$$
\textbf{Existence of viscous solution:} One can follow simillar lines of  \cite[Section 7]{neeraj} for the proof of the existence of unique weak solution to \eqref{4.1}. It is based on a semi-implicit time-discretization. To complete all the steps in proof of existence of viscous solution as proposed in \cite[Section 7]{neeraj}, we have to use repeatedly following property regarding diffusion term, for all $u, v\in H^1(\T^3)$
$$\int_{\T^3}\big(\mbox{div}(A(u)\nabla u)-\mbox{div}(A(v)\nabla v)\big)(u-v)\dx=-\int_{\T^3}\big|\sigma(u)\nabla u-\sigma(v)\nabla v\big|^2\dx.$$
Since contributions from diffusion term give sign, making use of above identity we can easily estimate all the necessary uniform bounds to get compactness results.  

\noindent
\textbf{Convergnece of viscous solutions:} From the proof of the continuous dependence estimate as proposed in \eqref{final}, \eqref{H1} and \eqref{H2} (for detials see Section 6 in \cite{vovelle}), we conclude that for all $t\in[0,T]$,
\begin{align*}
	\mathbb{E}\bigg[\int_{\mathbb{T}^N}|u^{\tau_n}(x,t)-u^{\tau_k}(x,t)|dx\bigg]&\le\,\mathbb{E}\int_{(\mathbb{T}^N)^2}\theta_\epsilon (x-y)|u_0(x)-u_0(y)|dx dy\\& \qquad + C_T\bigg(\delta+\delta\,\epsilon^{-1}+\delta^{2\gamma}\,\epsilon^{-2}+\delta^{-1}\epsilon^2+h(\delta)+(\sqrt{\tau_n}-\sqrt{\tau_k})^2\,\epsilon^{-2}\bigg)
\end{align*}
Choose $\delta=\epsilon^\beta$ with $\beta\in(\frac{1}{\gamma},2)$. then we conclude, for all $t\in[0,T],$
\begin{align*}
	\mathbb{E}[\|u^{\tau_n}(t)-u^{\tau_k}(t)\|_{L^1(\mathbb{T}^N)}]\,\to\,0\,\,\,\,\,\,\,\,\text{as}\,\,\,\,\,\,\tau_n,\tau_k\,\to\,0.
\end{align*}
It shows that sequence of viscous solutions $u^{\tau_n}$ is cauchy in $L_{\mathcal{P}_T}^1(\Omega\times[0,T]\times\mathbb{T}^N)$. Therefore $u^{\tau_n}$ converges to $u\in L_{\mathcal{P}_T}^1(\Omega\times[0,T]\times\mathbb{T}^N)$ as $\tau_n\to 0$.

\subsection{Approximate kinetic solutions} Here we apply the techique of Appendix \ref{A} to derive the kinetic formulation to \eqref{4.1} that satisfied by $f^{\tau_n}(t)=\mathbbm{1}_{u^{\tau_n}(t)\,\textgreater\,\zeta}$ in the sense of $D'(\mathbb{T}^N\times\mathbb{R})$. It reads as follows:
\begin{align*}
	df^{\tau_n}(t)+F'^{\,\tau_n}\cdot\nabla f^{\tau_n}(t) dt -A:D^2f^{\tau_n}(t) dt- &\tau_n \Delta f^{\tau_n}(t) dt+ g_x^\lambda(f^{\tau_n}(t))dt\\
	&= \delta_{u^{\tau_n}(t)=\zeta} \Phi\, dW + \partial_{\zeta}(m^{\tau_n} -\frac{1}{2}\beta^2 \delta_{u^{\tau_n}(t)=\zeta}) dt, 
\end{align*}
where
\begin{align*}
	dm^{\tau_n}(x,t,\zeta)&=dm_1^{\tau_n}(x,t,\zeta)+d\eta_1^{\tau_n}(x,t,\zeta)+d\eta^{\tau_n}_2(x,t,\zeta),\\
dm_1^{\tau_n}(x,t,\zeta)&=\tau_n |\nabla u^{\tau_n}|^2\delta_{u^{\tau_n}=\zeta},dxdt\\
d\eta_1^{\tau_n}(x,t,\zeta)&=\int_{\mathbb{R}^N}|u^{\tau_n}(x+z)-\zeta|\mathbbm{1}_{Conv\{u^{\tau_n}(x),u^{\tau_n}(x+z)\}}(\zeta)\mu(z)dzd\zeta dxdt\\
d\eta^{\tau_n}_2(x,t,\zeta)&=|div\int_0^{u^{\tau_n}} \sigma(\zeta)d\zeta|^2 \delta_{u^{\tau_n}(x,t)}(\zeta)dxdt.
\end{align*}
It implies that for all $\varphi\in C_c^2(\mathbb{T}^N\times\mathbb{R}),$ $\mathbb{P}$-almost surely, for all $t\in[0,T]$,
\begin{align}\label{approximate formulation}
	\langle f^{\tau_n}(t),\varphi\rangle &= \langle f_0^n,\varphi \rangle +\int_0^t \langle f^{\tau_n}(s), F'^{\,\tau_n}(\zeta)\cdot\nabla_x \varphi \rangle ds +\int_0^t\langle f(s),A:D^2\varphi\rangle ds -\int_0^t \langle f^{\tau_n}(s), g_x^\lambda(\varphi)\rangle ds\notag\\
	&\qquad+ \int_0^t \int_{\mathbb{T}^N}\int_{\mathbb{R}} \beta_k(x,\zeta) \varphi(x,\zeta) d\mathcal{V}_{x,s}^n(\zeta) dx dw_k(s)+ \tau_n\int_0^t \langle f^{\tau_n}(s),\Delta \varphi \rangle ds.\notag \\
	& \qquad+\frac{1}{2}\int_0^t\int_{\mathbb{T}^N}\int_{\mathbb{R}} \beta^2(x,\zeta) \partial_{\zeta}\varphi(x,\zeta) d\mathcal{V}_{x,s}^n(\zeta)dx ds- m^{\tau_n}(\partial_{\zeta}\varphi)([0,t]).
\end{align}
\begin{proposition}\label{p estimate}
	For all $p\in [2,\infty)$, the viscous solution $u^{\tau_n}$ to \eqref{4.1} satisfies the following estimates:
	\begin{align}\label{lp estimate}
		\mathbb{E}\sup_{0\le t\le T}\|u^{\tau_n}(t)\|_{L^p(\mathbb{T}^N)}^p\le C (1+\mathbb{E}\|u_0\|_{L^p(\mathbb{T}^N)}^p),
	\end{align}
	\begin{align}\label{4.7}
		\mathbb{E}\sup_{0\le t\le T}\|u^{\tau_n}(t)\|_{L^p(\mathbb{T}^N)}^p+p(p-1)\int_0^T\int_{\mathbb{T}^N}\int_{\mathbb{R}} |\zeta|^{p-2} dm^{\tau_n}(x,s,\zeta)\le C (1+\mathbb{E}\|u_0\|_{L^p(\mathbb{T}^N)}^p),
	\end{align}
	where the constant C does not depend on $\tau_n$.
\end{proposition}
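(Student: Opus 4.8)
The plan is to obtain both bounds from a single application of It\^o's formula to the functional $u\mapsto\int_{\mathbb{T}^N}|u|^p\,dx$ along the viscous solution $u^{\tau_n}$ of \eqref{4.1}; equivalently, one may test the kinetic formulation \eqref{approximate formulation} with the spatially homogeneous weight $\theta_p(\zeta)=p|\zeta|^{p-2}\zeta$, whose derivative is $\theta_p'(\zeta)=p(p-1)|\zeta|^{p-2}$ and for which every term involving a spatial derivative of the test function drops out, so that all the dissipation is automatically encoded in $m^{\tau_n}$. Since the regularising term $\tau_n\Delta$ renders \eqref{4.1} uniformly parabolic, $u^{\tau_n}$ is smooth enough that It\^o's formula applies directly (for $p\ge2$ the map $u\mapsto|u|^p$ is $C^2$); the only care needed is a truncation of $\theta_p$ at level $R$, after which one lets $R\to\infty$ using the condition at infinity on $m^{\tau_n}$ and $\mathcal{V}^n$ together with \eqref{lp estimate}. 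After applying It\^o I would collect the drift contributions coming from the flux, the fractional operator, the degenerate diffusion, the artificial viscosity, the It\^o correction generated by $\Phi\,dB$, and a martingale.

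First I would dispose of the reversible and sign-definite terms. The flux contribution $p\int_{\mathbb{T}^N}|u^{\tau_n}|^{p-2}u^{\tau_n}\,\mathrm{div}(F^{\tau_n}(u^{\tau_n}))\,dx$ is the integral over the torus of a perfect spatial divergence, hence vanishes by periodicity. Integrating by parts the diffusion and viscosity terms and invoking the chain rule \eqref{chain} (with $A=\sigma^2$) identifies them with $-p(p-1)\int_{\mathbb{T}^N}\int_{\mathbb{R}}|\zeta|^{p-2}\,d(m_1^{\tau_n}+\eta_2^{\tau_n})\le0$, where $m_1^{\tau_n}=\tau_n|\nabla u^{\tau_n}|^2\delta_{u^{\tau_n}=\zeta}$ and $\eta_2^{\tau_n}$ carry the viscous and parabolic dissipations respectively.

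The genuinely nonlocal step is the fractional term $-p\int_{\mathbb{T}^N}|u^{\tau_n}|^{p-2}u^{\tau_n}\,g_x^\lambda[u^{\tau_n}]\,dx$. Writing $g_x^\lambda$ through its singular kernel \eqref{2.4} and symmetrising in the increment $z$ (using $\mu(z)=\mu(-z)$), this equals $-\tfrac12\iint(\psi'(u(x+z))-\psi'(u(x)))(u(x+z)-u(x))\mu(z)\,dz\,dx$ with $\psi(\xi)=|\xi|^p$, a C\'ordoba--C\'ordoba-type expression that is manifestly nonnegative because $\psi'$ is nondecreasing, so that its negative carries the correct sign. A short averaging computation over the convex hull $\mathrm{Conv}\{u(x),u(x+z)\}$, in which $|u(x+z)-\zeta|+|u(x)-\zeta|=|u(x+z)-u(x)|$ for $\zeta$ in that hull, then shows the symmetrised form equals exactly $-p(p-1)\int_{\mathbb{T}^N}\int_{\mathbb{R}}|\zeta|^{p-2}\,d\eta_1^{\tau_n}$, so that the three dissipative contributions assemble into $-p(p-1)\int|\zeta|^{p-2}\,dm^{\tau_n}$ with $m^{\tau_n}=m_1^{\tau_n}+\eta_1^{\tau_n}+\eta_2^{\tau_n}$.

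It remains to control the stochastic terms and close the estimate. The It\^o correction $\tfrac{p(p-1)}2\int_{\mathbb{T}^N}|u^{\tau_n}|^{p-2}\beta^2(x,u^{\tau_n})\,dx$ is bounded, using the linear growth \eqref{2.2} and Young's inequality, by $C(1+\|u^{\tau_n}\|_{L^p(\mathbb{T}^N)}^p)$. For the martingale $p\sum_k\int_0^t\int_{\mathbb{T}^N}|u^{\tau_n}|^{p-2}u^{\tau_n}\beta_k\,dx\,dw_k$, the Burkholder--Davis--Gundy inequality followed by Minkowski's integral inequality bounds its running supremum by $\tfrac12\mathbb{E}\sup_{[0,T]}\|u^{\tau_n}\|_{L^p(\mathbb{T}^N)}^p+C\,\mathbb{E}\int_0^T(1+\|u^{\tau_n}\|_{L^p(\mathbb{T}^N)}^p)\,ds$, whose first summand is absorbed into the left-hand side. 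Taking the supremum in time, then expectations, and applying Gr\"onwall's lemma yields \eqref{lp estimate} after discarding the nonnegative measure integrals, and \eqref{4.7} by retaining them; all constants are independent of $\tau_n$ since the growth bound \eqref{2.2} is. I expect the main obstacle to be the fractional identification above --- matching the symmetrised kernel expression with the $|\zeta|^{p-2}$-weighted mass of $\eta_1^{\tau_n}$ --- together with the bookkeeping needed to legitimise the non-compactly-supported weight $\theta_p$ through the $R\to\infty$ limit and to carry out the BDG absorption uniformly in $\tau_n$.
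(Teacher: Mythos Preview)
Your proposal is correct and follows essentially the same approach as the paper: apply the (generalized) It\^o formula to $\int_{\mathbb{T}^N}|u^{\tau_n}|^p\,dx$, show the flux term vanishes by periodicity and that the viscosity, degenerate diffusion, and fractional contributions assemble into $-p(p-1)\int|\zeta|^{p-2}\,dm^{\tau_n}\le0$, then close via the growth bound \eqref{2.2}, BDG, and Gr\"onwall. The only refinement the paper adds is that the generalized It\^o formula of \cite[Appendix~A]{vovelle} requires \emph{quadratic growth} of the nonlinearity rather than mere $C^2$-smoothness, so it works with explicit approximants $\phi_l(\zeta)=|\zeta|^p$ for $|\zeta|\le l$, extended quadratically for $|\zeta|>l$, and verifies the algebraic inequalities $|\zeta\phi_l'|\le\phi_l$, $\zeta^2\phi_l''\le p(p-1)\phi_l$, etc.\ needed to keep the constants uniform in $l$ --- this is exactly the truncation-and-limit step you anticipate.
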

\begin{proof}
	In this proof, we use the generalized It\^o formula \cite[Appendix A]{vovelle}. As the generalized It\^o formula can not be applied directly to $\phi(\zeta)=|\zeta|^p$, $p\in [2,\infty)$, and $\kappa(x)=1$. Here we follow the approach of \cite{denis,vovelle} and introduce functions $\phi_l\in C^2(\mathbb{R})$ that approximate $\phi$ and have quadratic growth at infinity as required by Proposition \cite[Appendix A]{vovelle}. We define
	\[\phi_l(\zeta)=\begin{cases}
		|\zeta|^p, & |\zeta|\,\le\, l,\\
		l^{p-2}\bigg[\frac{p(p-1)}{2}\zeta^2-p(p-2)l|\zeta|+\frac{(p-1)(p-2)}{2}l^2\bigg], & |\zeta|\,\textgreater \,l.
	\end{cases}
	\]
	It is now easy to see that
	$$|\zeta \phi_l'(\zeta)| \,\le \, \phi_l (\zeta),$$
	$$|\phi_l'(\zeta)| \, \le p(1+\phi_l(\zeta)),$$
	$$|\phi_l'(\zeta)| \, \le |\zeta|\phi''(\zeta),$$
	$$\zeta^2 \phi_l ''(\zeta)\, \le p(p-1) \phi_l (\zeta),$$
	$$\phi_l''(\zeta) \, \le p(p-1)(1+\phi_l(\zeta))$$
	holds true for all $\zeta\in\mathbb{R}$, $l\in\mathbb{N}$, $p\in[2,\infty)$, then by  generalized It\^o formula \cite[Appendix A]{vovelle} we have for all $t\in[0,T]$
	\begin{align*}
		\int_{\mathbb{T}^N} \phi_l(u^{\tau_n}(t)) dx &= \int_{\mathbb{T}^N} \phi_l(u_0) dx -\int_0^t \int_{\mathbb{T}^N} \phi_l'(u^{\tau_n}) \mbox{div}(F^{\tau_n}(u^{\tau_n})) dx ds-\int_0^t\int_{\mathbb{T}^N} \phi_l'(u^{\tau_n}) g_x^\lambda(u^{\tau_n}) dx ds\\&\qquad+\int_0^t\int_{\mathbb{T}^N} \phi_l'(u^{\tau_n}) div(A(u^{\tau_n})\nabla u^{\tau_n}) dx ds+\int_0^t\int_{\mathbb{T}^N} \phi_l'(u^{\tau_n}) \tau_n \Delta u^{\tau_n} dx ds\\&\qquad+\sum_{k\ge1}\int_0^t\int_{\mathbb{T}^N} \phi_l'(u^{\tau_n}) \beta_k(u^{\tau_n},x) dx dw_k(s)+\frac{1}{2}\int_0^t\int_{\mathbb{T}^N} \phi_l''(u^{\tau_n}) \beta^2(u^{\tau_n}) dx ds.  
	\end{align*}
	Let us define as $H^n(\zeta)=\int_0^\zeta \phi_l''(\zeta) F^{\tau_n}(\zeta) d\zeta$, then it is easy to conclude that the second term on the right-hand side vanishes due to the boundary conditions. The third, fourth and fifth terms are nonpositive. Indeed,
	\begin{align*}\int_0^t\int_{\mathbb{T}^N} \phi_l'(u^{\tau_n}) \mbox{div} (A(u^{\tau_n})\nabla u^{\tau_n}) dx ds &= - \int_0^t \int_{\mathbb{T}^N} \phi_l''(u^{\tau_n})|\sigma(u^{\tau_n})\nabla u^{\tau_n}|^2 dx ds,\\
	-\int_0^t\int_{\mathbb{T}^N}\phi_l'(u^{\tau_n}) g_x^\lambda(u^{\tau_n}) dx ds&=-\int_0^t\int_{\mathbb{T}^N}\int_{\mathbb{R}}\phi_l''(\zeta)\eta_1^{\tau_n}(x,\zeta,s),
	\\
	\int_0^t\int_{\mathbb{T}^N} \phi_l'(u^{\tau_n}) \tau_n \Delta u^{\tau_n} dx ds&= -\int_0^t\int_{\mathbb{T}^N}\phi_l''(u^{\tau_n}) \tau_n |\nabla u^{\tau_n}|^2 dx ds.
	\end{align*}
	After onwards we can follow proof of \cite[Proposition 4.3]{vovelle} to conclude result.
\end{proof}
\begin{remark}[\textbf{Uniform bound in $H^{2\lambda}$}]
	We have uniform $H^{2\lambda}$-bound of viscous solutions as consequence of proof of above proposition. In particular for p=2, we get
	\begin{align*}
			\frac{1}{2}\int_{\mathbb{T}^N} |u^{\tau_n}(t)|^2 dx &= \frac{1}{2}\int_{\mathbb{T}^N} |u_0|^2 dx -\int_0^t \int_{\mathbb{T}^N} u^{\tau_n} \mbox{div}(F^{\tau_n}(u^{\tau_n})) dx ds+\int_0^t\int_{\mathbb{T}^N} u^{\tau_n} div(A(u^{\tau_n})\nabla u^{\tau_n}) dx ds\\&\qquad-\int_0^t\int_{\mathbb{T}^N} u^{\tau_n} g_x^\lambda(u^{\tau_n}) dx ds+\int_0^t\int_{\mathbb{T}^N} u^{\tau_n} \tau_n \Delta u^{\tau_n} dx ds\\&\qquad+\sum_{k\ge1}\int_0^t\int_{\mathbb{T}^N} u^{\tau_n} \beta_k(u^{\tau_n},x) dx dw_k(s)+\frac{1}{2}\int_0^t\int_{\mathbb{T}^N} \beta^2(x, u^{\tau_n}) dx ds.  
	\end{align*}
A simple consequence of BDG inequality, Gronwall's inequality implies that, for all $t\in[0,T]$
	\begin{align*}
		\mathbb{E}\int_{\mathbb{T}^N} |u^{\tau_n}(t)|^2 dx + \mathbb{E}\int_0^T\int_{\mathbb{T}^N}|g_x^{\frac{\lambda}{2}}(u(x))|^2\dx
		\le\,C\,\mathbb{E}\int_{\mathbb{T}^N}|u_0(x)|^2\dx.
	\end{align*}
It shows that $u^{\tau_n}$ uniformly bounded in $L^2(\Omega;L^2([0,T];H^{\lambda}(\mathbb{T}^3))).$	
\end{remark}
\subsection{Convergence of approximate kinetic functions}\label{section 5.3}In this subsection, we shall use the following notions: we say that a sequence $(\mathcal{V}^{\tau_n})$ of Young measures converges to $\mathcal{V}$ in $\mathcal{Y}^1$ if \eqref{2.12} is satisfied. A random Young measure is a $\mathcal{Y}^1$- valued random variable  by definition. We define Young measures on $\mathbb{T}^N\times[0,T]$ as following
$\mathcal{V}^{\tau_n}=\delta_{u^{\tau_n}=\zeta}, $
${\mathcal{V}}=\delta_{{u}=\zeta}$
and sequence $(\mathcal{V}^{\tau_n})$ of Young measures satisfies 
\begin{align}\label{4.10}\mathbb{E}\Bigg[\sup_{t\in[0,T]}\int_{\mathbb{T}^N}\int_{\mathbb{R}}|\zeta|^p d\mathcal{V}_{x,t}^{\tau_n}(\zeta) dx\Bigg]\le\,C_p.
\end{align}
\begin{proposition}\label{D}
	It holds true $($up to subsequences$)$
	\begin{enumerate}
		\item[1.] sequence of Young measures ${\mathcal{V}}^n$ converge to ${\mathcal{V}}$ in $\mathcal{Y}^1$,  ${\mathbb{P}}$-a.s.
		\item[2.] ${\mathcal{V}}$ satisfies
		\begin{align}\label{4.11}
			{\mathbb{E}}\Bigg(\sup_{K\subset[0,T]}\frac{1}{|K|}\int_{K}\int_{\mathbb{T}^N}\int_{\mathbb{R}}|\zeta|^p d{\mathcal{V}}_{x,t}(\zeta)dx dt\Bigg)&\le\, C_p,
		\end{align}
		where the supremum in \eqref{4.11} is a countable supremum over all open intervals $K\subset[0,T]$ with rational end points. Furthermore, if ${f}^{\tau_n}, {f}:\mathbb{T}^N\times[0,T]\times\mathbb{R}\times{\Omega}\to [0,1]$ are defined by
		$${f}^{\tau_n}(x,t,\zeta)={\mathcal{V}}_{x,t}^{\tau_n}(\zeta,+\infty),\,\,\,\, {f}(x,t,\zeta)={\mathcal{V}}_{x,t}(\zeta,+\infty),$$
		then ${f}^{\tau_n}\to{f}$ in $L^{\infty}(\mathbb{T}^N\times[0,T]\times\mathbb{R})$-weak-* ${\mathbb{P}}$-almost surely.
		\item[3.] There exist a full measure suset $D$ containing $0$ of $[0,T]$ such that for all $t\in D$
		$${f}^{\tau_n}\to {f}\,\,\, in\,\,\,\,\,\,L^\infty (\Omega\times\mathbb{T}^N\times\mathbb{R}).$$
		$$$$
	\end{enumerate}
\end{proposition}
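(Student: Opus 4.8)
The plan is to exploit the strong convergence $u^{\tau_n}\to u$ in $L^1_{\mathcal{P}_T}(\Omega\times[0,T]\times\mathbb{T}^N)$ already obtained above: it collapses all three assertions to pointwise (almost everywhere) statements, with the abstract results of Section~\ref{Sec2} (Corollary~\ref{th2.7}) available to guarantee the existence of the limiting kinetic function. First I would extract a subsequence, not relabeled, along which $u^{\tau_n}\to u$ almost everywhere on $\Omega\times[0,T]\times\mathbb{T}^N$; by Fubini, for $\mathbb{P}$-a.e. $\omega$ one then has $u^{\tau_n}(\omega,\cdot,\cdot)\to u(\omega,\cdot,\cdot)$ for a.e. $(x,t)$. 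The moment bound \eqref{4.10} (equivalently \eqref{lp estimate}) will be used repeatedly through Fatou's lemma.

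For assertion (1), fix such an $\omega$ and test against $h\in L^1(\mathbb{T}^N\times[0,T])$ and $g\in C_b(\mathbb{R})$: since $\int h\int g\,d\mathcal{V}^{\tau_n}_{x,t}\,dx\,dt=\int h(x,t)\,g(u^{\tau_n}(x,t))\,dx\,dt$ and $g$ is bounded and continuous, dominated convergence (dominating by $\|g\|_{L^\infty}|h|\in L^1$) passes to the limit and yields $\mathcal{V}^{\tau_n}(\omega)\to\mathcal{V}(\omega)=\delta_{u(\omega)}$ in $\mathcal{Y}^1$, i.e. the claimed $\mathbb{P}$-a.s. convergence. The bound \eqref{4.10} ensures the limit $\mathcal{V}$ vanishes at infinity, hence is a genuine element of $\mathcal{Y}^1$.

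For assertion (2), the same $\omega$-wise pointwise convergence gives $f^{\tau_n}(\omega)=\mathbbm{1}_{u^{\tau_n}(\omega)>\zeta}\to\mathbbm{1}_{u(\omega)>\zeta}=f(\omega)$ for a.e. $(x,t,\zeta)$ --- the exceptional set $\{\zeta=u\}$ being Lebesgue-null in $\zeta$ --- so that, against any $\phi\in L^1(\mathbb{T}^N\times[0,T]\times\mathbb{R})$, dominated convergence ($|f^{\tau_n}\phi|\le|\phi|$) yields $\int f^{\tau_n}\phi\to\int f\phi$, i.e. weak-$*$ $L^\infty$ convergence $\mathbb{P}$-a.s. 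For the averaged moment bound \eqref{4.11} I would argue interval by interval: for each open $K\subset[0,T]$ with rational endpoints, Fatou in $(x,t)$ gives $\tfrac{1}{|K|}\int_K\int_{\mathbb{T}^N}|u|^p\,dx\,dt\le\liminf_n\sup_{t}\int_{\mathbb{T}^N}|u^{\tau_n}(t)|^p\,dx$; taking the countable supremum over such $K$ and then expectation, Fatou in $\omega$ together with \eqref{4.10} produces \eqref{4.11}. Writing the bound as a supremum of averages over rational intervals is exactly what renders this countable-supremum-plus-Fatou argument admissible for a limit that is only a.e. defined in time.

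For assertion (3) I would read the convergence as $\|u^{\tau_n}-u\|_{L^1([0,T];L^1(\Omega\times\mathbb{T}^N))}\to0$ and pass to a further subsequence so that $t\mapsto\|u^{\tau_n}(t)-u(t)\|_{L^1(\Omega\times\mathbb{T}^N)}\to0$ for a.e. $t$; setting $D$ to be this full-measure set augmented by $t=0$ (where $u^{\tau_n}(0)=u_0$ forces the exact equality $f^{\tau_n}(0)=f(0)$) handles the requirement $0\in D$. For each $t\in D$, strong $L^1(\Omega\times\mathbb{T}^N)$ convergence of $u^{\tau_n}(t)$ to $u(t)$ yields, up to a subsequence a.e. and then by uniqueness of the weak-$*$ limit for the whole sequence, $f^{\tau_n}(t)\to f(t)$ weak-$*$ in $L^\infty(\Omega\times\mathbb{T}^N\times\mathbb{R})$. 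The main obstacle is precisely this fixed-time statement: a.e. convergence in the joint variable $(t,\omega,x)$ does not by itself give convergence at individual times, and recovering it forces the detour through $L^1([0,T];L^1(\Omega\times\mathbb{T}^N))$ and the careful choice of the null set of bad times. Had strong convergence not been available, one would instead extract $\mathcal{V}$ from Corollary~\ref{th2.7} and invoke Lemma~\ref{th2.8} to identify it as the equilibrium $\delta_u$, the genuinely delicate route that the present strong convergence lets us bypass.
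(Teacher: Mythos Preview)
Your proposal is correct and follows essentially the same strategy as the paper: both leverage the strong $L^1$ convergence of $u^{\tau_n}$ to $u$ to collapse the Young-measure assertions into pointwise statements. The only differences are presentational: the paper obtains \eqref{4.11} by invoking the lower semi-continuity of the functional $\kappa_p:\mathcal{V}\mapsto\sup_K\frac{1}{|K|}\int_K\int_{\mathbb{T}^N}\int_{\mathbb{R}}|\zeta|^p\,d\mathcal{V}_{x,t}\,dx\,dt$ (citing \cite[Proposition~4.3]{sylvain}) rather than doing the interval-by-interval Fatou argument you spell out, and it quotes Corollary~\ref{th2.7} for the weak-$*$ convergence of $f^{\tau_n}$ rather than your direct route via a.e.\ convergence plus dominated convergence. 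Your treatment of assertion~(3), including the explicit inclusion of $t=0$ via the shared initial data, is more detailed than the paper's one-line ``similar argument,'' but the content is the same.
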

\begin{proof} Markov inequality and strong convergence of $u^{\tau_n}$ in $L^1(\Omega\times[0,T]\times\mathbb{T}^N)$, implies that $\p$-a.s, $u^{\tau_n}$ (upto subsequecnce) converges $u$ in $L^1([0,T]\times\mathbb{T}^N).$ It implies that, ${\mathbb{P}}$-almost surely, sequence of Young measures ${\mathcal{V}^{\tau_n}}$ converges to ${\mathcal{V}}$ in $\mathcal{Y}^1$.
	\noindent
	For second point, we follow proof of \cite[Proposition 4.3]{sylvain}. Since the map
	$$\kappa_p:\mathcal{Y}^1\to[0,+\infty],\,\,\, \mathcal{V}\mapsto\,\, \sup_{J\subset[0,T]}\frac{1}{|J|}\int_{J}\int_{\mathbb{T}^N}\int_{\mathbb{R}}|\zeta|^p\, d\mathcal{V}_{x,t}(\zeta)\,dx\,dt$$
	is lower semi-continuous, we have
	$${\mathbb{E}}\, \kappa_p({\mathcal{V}})\le\liminf_{\tau_n\to 0}{\mathbb{E}}\,\kappa_p({\mathcal{V}}^{\tau_n})\le\,C_p$$ 
	Consequently, the ${\mathcal{V}}$ satisfies the condition
	$${\mathbb{E}}\bigg(\sup_{J\subset[0,T]}\frac{1}{|J|}\int_{J}\int_{\mathbb{T}^N}\int_{\mathbb{R}}|\zeta|^p\,d\mathcal{V}_{x,t}(\zeta)\,dx\,dt\bigg)\le\,C_p.$$
	If we apply Corollary \ref{th2.7}, we obtain that ${f}^{\tau_n}\to{f}$ in $L^\infty(\mathbb{T}^N\times[0,T]\times\mathbb{R})$-weak-* ${\mathbb{P}}$-almost surely.
	By using similar argument, there exist  full measure subset $D$ of $[0,T]$ such that for all $t\in D$
	$${f}^{\tau_n}\to {f} \,\,\, in\,\,\,\,\, L^\infty(\Omega\times\mathbb{T}^N\times\mathbb{R}).$$
\end{proof}
\subsection{Limit of the random measures}\label{section 5.4}
	Suppose $\mathcal{M}_b(\mathbb{T}^N\times[0,T]\times{R})$ is denote the space of bounded Borel measures on $\mathbb{T}^N\times[0,T]\times\mathbb{R}$ equipped norm is given by the total variation of measures. This space is the dual space to the space of all continuous functions vanishing at infinity $C_0(\mathbb{T}^N\times[0,T]\times\mathbb{R})$ equipped with the supremum norm. It is separable, so the following duality holds for $r$, $r^*\in(1,\infty)$ being conjugate exponents 
$$(L^{r^*}({\Omega};C_0(\mathbb{T}^N\times[0,T]\times\mathbb{R}))^*\simeq L_w^r({\Omega};\mathcal{M}_b(\mathbb{T}^N\times[0,T]\times\mathbb{R}),$$
where the space $L_w^r({\Omega};\mathcal{M}_b(\mathbb{T}^N\times[0,T]\times\mathbb{R})$ contains all $weak^*$-measurable mappings $\eta:{\Omega}\to\mathcal{M}_b(\mathbb{T}^N\times[0,T]\times\mathbb{R})$ such that
$${\mathbb{E}}\left\Vert \eta\right\Vert_{\mathcal{M}_b}^q\textless\,\infty.$$ 
\begin{lem}\label{m}
	There exists a kinetic measure ${m}$ such that
	\[ m^{\tau_n}\overset{w^*}{\to}{m}\,\,\qquad in\,\,\qquad\qquad L_w^2({\Omega}; \mathcal{M}_b(\mathbb{T}^N\times[0,T]\times\mathbb{R}))-weak^* \]
	Moreover, ${m}$ can be rewritten as ${\eta}_1+\eta_2+{m}_1$, where
	\begin{align*} d{\eta}_1(x,t,\zeta)&=\int_{\mathbb{R}^N}|u(x+z,t)-\zeta|\mathbbm{1}_{Conv\{{u}(x+z,t),{u}(x,t)\}}(\zeta)\mu(z)dz dx d\zeta dt,\\
	d\eta_2(x,t,\zeta)&=|\mbox{div}\int_0^u \sigma(\zeta)d\zeta|^2 \delta_{u(x,t)}(\zeta)dxdt,
	\end{align*}
	and $m_1$ is almost surely a nonnegative measure over $\mathbb{T}^N\times[0,T]\times\mathbb{R}$.
	
\end{lem}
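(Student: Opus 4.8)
The plan is to realize $m$ as a weak-$*$ limit of the approximate measures $m^{\tau_n}$ and then to recover the pieces $\eta_1,\eta_2$ inside this limit by lower semicontinuity. First I would record the relevant a priori bound. Taking $p=2$ in estimate \eqref{4.7} of Proposition \ref{p estimate} controls the total mass in expectation, while squaring the $p=2$ energy identity (the $H^{2\lambda}$ remark following Proposition \ref{p estimate}) and invoking the Burkholder--Davis--Gundy inequality, together with $u_0\in\bigcap_p L^p(\Omega;L^p(\mathbb{T}^N))$, yields a uniform second-moment bound
$$\sup_n \mathbb{E}\, m^{\tau_n}\big(\mathbb{T}^N\times[0,T]\times\mathbb{R}\big)^2 \le C.$$
Hence $(m^{\tau_n})$ is bounded in $L_w^2(\Omega;\mathcal{M}_b(\mathbb{T}^N\times[0,T]\times\mathbb{R}))$, which is the dual of the separable space $L^2(\Omega;C_0(\mathbb{T}^N\times[0,T]\times\mathbb{R}))$. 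By separability and Banach--Alaoglu I extract a subsequence (not relabelled) and a limit $m\in L_w^2(\Omega;\mathcal{M}_b)$ with $m^{\tau_n}\overset{w^*}{\to}m$; nonnegativity of $m$ is preserved since each $m^{\tau_n}\ge0$.

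Next I would verify that $m$ is a kinetic measure in the sense of Definition \ref{kinetic measure}. Measurability (i) is immediate from the weak-$*$ construction. For the vanishing-at-infinity property (ii), I use that \eqref{4.7} with any $p>2$ bounds $\mathbb{E}\int|\zeta|^{p-2}\,dm^{\tau_n}$ uniformly in $n$, so $\mathbb{E}\,m^{\tau_n}(\mathbb{T}^N\times[0,T]\times\mathcal{B}_R^c)\le C R^{-(p-2)}$; lower semicontinuity under weak-$*$ convergence transfers this bound to $m$ and gives $\lim_{R\to\infty}\mathbb{E}\,m(\mathbb{T}^N\times[0,T]\times\mathcal{B}_R^c)=0$.

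To obtain the decomposition it suffices to prove $m\ge\eta_1+\eta_2$ almost surely and then set $m_1:=m-\eta_1-\eta_2$. For the nonlocal part I would exploit the strong convergence $u^{\tau_n}\to u$ in $L^1(\Omega\times[0,T]\times\mathbb{T}^N)$ established in Section \ref{Sec4} (and used in Proposition \ref{D}): along an a.e.-convergent subsequence the integrand $|u^{\tau_n}(x+z)-\zeta|\mathbbm{1}_{\mathrm{Conv}\{u^{\tau_n}(x),u^{\tau_n}(x+z)\}}(\zeta)$ converges pointwise, and the singular kernel $\mu(z)=|z|^{-N-2\lambda}$ is handled by splitting $|z|\le r$ and $|z|>r$. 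On $|z|>r$ the kernel is bounded and strong convergence passes to the limit; on $|z|\le r$ the contribution is uniformly small by the identity $\int_{\mathbb{R}}|u(x+z)-\zeta|\mathbbm{1}_{\mathrm{Conv}}(\zeta)\,d\zeta=\tfrac12|u(x+z)-u(x)|^2$ together with the uniform $H^{\lambda}$ bound of Section \ref{Sec4}, whence a diagonal argument gives $\eta_1^{\tau_n}\to\eta_1$ weakly-$*$. For the parabolic defect $\eta_2^{\tau_n}=\big|\mathrm{div}\int_0^{u^{\tau_n}}\sigma\big|^2\delta_{u^{\tau_n}}$ I would use the weak $L^2$ convergence $\mathrm{div}\int_0^{u^{\tau_n}}\sigma(\zeta)\,d\zeta\rightharpoonup\mathrm{div}\int_0^{u}\sigma(\zeta)\,d\zeta$ together with the equilibrium structure of the limit Young measure from Proposition \ref{D} to conclude, by weak lower semicontinuity of the quadratic functional against nonnegative test functions, that $\liminf_n\langle\eta_2^{\tau_n},\psi\rangle\ge\langle\eta_2,\psi\rangle$ for $\psi\ge0$. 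Since $m_1^{\tau_n}=\tau_n|\nabla u^{\tau_n}|^2\delta_{u^{\tau_n}}\ge0$, combining these with $m^{\tau_n}=m_1^{\tau_n}+\eta_1^{\tau_n}+\eta_2^{\tau_n}$ gives $\mathbb{E}\langle m-\eta_1-\eta_2,g\rangle\ge0$ for every nonnegative $g\in L^2(\Omega;C_0)$; testing with $g=\mathbbm{1}_A\psi$ and using separability yields $m\ge\eta_1+\eta_2$ almost surely, so $m_1\ge0$ a.s.

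The \emph{main obstacle} is the lower semicontinuity step for $\eta_2$: one disposes only of weak $L^2$ convergence of $\mathrm{div}\int_0^{u^{\tau_n}}\sigma$ and weak convergence of the Young measures $\mathcal{V}^{\tau_n}$, so the limit of the quadratic, measure-valued object $\big|\mathrm{div}\int_0^{u^{\tau_n}}\sigma\big|^2\delta_{u^{\tau_n}}$ must be bounded below by $\eta_2$ through a convexity/lower-semicontinuity argument exploiting that the limit measure is an equilibrium, as in \cite{vovelle,chen}. A secondary technical point is the justification of the interchange of the $\mu(z)\,dz$ integration with the limit in $\eta_1^{\tau_n}$ despite the non-integrable singularity at $z=0$, which is precisely what the near-diagonal $H^\lambda$ estimate supplies.
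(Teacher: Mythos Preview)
Your overall strategy coincides with the paper's: uniform $L^2_w$ bound via the $p=2$ energy identity, Banach--Alaoglu, verification of Definition~\ref{kinetic measure} through the moment bound \eqref{4.7}, and then the inequality $m\ge\eta_1+\eta_2$ by lower-semicontinuity arguments. The treatment of $\eta_2$ is essentially identical (weak $L^2$ convergence of $\mbox{div}\int_0^{u^{\tau_n}}\sigma$ identified via integration by parts, then lower semicontinuity of the squared norm and Fatou in $\zeta$).

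The one point where you diverge is the handling of $\eta_1$. You aim for full weak-$*$ convergence $\eta_1^{\tau_n}\to\eta_1$ via a near/far splitting in $z$, invoking the uniform $H^\lambda$ bound to make the contribution on $|z|\le r$ uniformly small. That uniform smallness is not actually delivered by a mere uniform bound on the Gagliardo energy $\int_0^T\!\int_{\mathbb{T}^N}\!\int_{\mathbb{R}^N}|u^{\tau_n}(x+z)-u^{\tau_n}(x)|^2\mu(z)\,dz\,dx\,dt$: boundedness of a sequence of nonnegative $L^1$ functions of $z$ does not give equi-integrability near $z=0$, so the ``diagonal argument'' is not justified as stated. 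The paper sidesteps this entirely: it extracts a separate weak-$*$ limit $o_2$ of $\eta_1^{\tau_n}$, observes that along a subsequence $u^{\tau_n}(x,t)\to u(x,t)$ and $u^{\tau_n}(x+z,t)\to u(x+z,t)$ for a.e.\ $(x,t,z)$, hence the nonnegative integrand converges pointwise a.e., and a single application of Fatou's lemma gives $\eta_1\le o_2$. Since only the inequality $m\ge\eta_1+\eta_2$ is required (and since $\liminf(a_n+b_n)\ge\liminf a_n+\liminf b_n$ already suffices in your final step), the paper's Fatou route is both simpler and avoids the equi-integrability issue you flag as a ``secondary technical point''.
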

\begin{proof}
	We observe that due to the computations used in the proof of \ref{p estimate}, it holds $\mathbb{P}$-almost surely,
	\begin{align*}
		&\int_0^T\int_{\mathbb{T}^N}\int_{\mathbb{R}}m^{\tau_n}(x,t,\zeta)\,d\zeta\,dx\,dt\\
		&\qquad\qquad\le\,C\, \left\Vert u_0\right\Vert_{L^2(\mathbb{T}^N)}^2
		+\,C\,\sum_{k\ge1}\int_0^T\int_{\mathbb{T}^N}\,u^{\tau_n}\,\beta_k(x, u^{\tau_n})\,dx\,dw_k(t)+\,C\,\int_0^T\int_{\mathbb{T}^N}\, \beta^2(x, u^{\tau_n})\,dx ds.
	\end{align*}
	Taking square and expectation and finally by using of the Ito isometry, we deduce
	\begin{align*}
		\mathbb{E}|m^{\tau_n}([0,T]\times\mathbb{T}^N\times\mathbb{R})|^2
		&=\mathbb{E}\bigg|\int_0^T\int_{\mathbb{T}^N}\tau_n |\nabla u^{\tau_n}|^2\,dx\,dt+\int_0^T\int_{\mathbb{T}^N}\int_{\mathbb{R}}\eta_1^{\tau_n}(x,t,\zeta)+\eta_2^n(x,t,\zeta)\,d\zeta\,dx\,dt\bigg|^2\,\\
		&\le\,C.
	\end{align*}
We obtain that
	$${\mathbb{E}}|m^{\tau_n}([0,T]\times\mathbb{T}^N\times{R})|^2\,\le\,C.$$
	therefore, the set $\{m^{\tau_n};n\in\mathbb{N}\}$ is bounded in $L_w^2({\Omega};\mathcal{M}_b(\mathbb{T}^N\times[0,T]\times\mathbb{R})$ and, making use of the Banach-Alaoglu theorem, it possesses a $weak^*$ convergent subsequence, still denoted by $\{m^{\tau_n},\;n\in\mathbb{N}\}$. It shows that there is ${\mathbb{P}}$-almost sure convergence of $(m^{\tau_n})$ to $m$ in $\mathcal{M}_b(\mathbb{T}^N\times[0,T]\times\mathbb{R})$-weak-*.
	Now, we want to show that $weak^*$ limit ${m}$ is actually a kinetic measure. The first point about measurablilty of $m$ in Definition \ref{kinetic measure} is straightforward.  For second point, we observe that due to \eqref{4.7},
	\begin{align}\label{4.13}
		\mathbb{E}\sup_{t\in[0,T]}\left\Vert u^{\tau_n}\right\Vert_{L^p(\mathbb{T}^N)^p}^p+p(p-1)\mathbb{E}\int_0^T\int_{\mathbb{T}^N}\int_{\mathbb{R}} |\zeta|^{p-2}&dm^{\tau_n}(x,t,\zeta)\le\,C(1+\mathbb{E}\left\Vert u_0\right\Vert_{L^p(\mathbb{T}^N)}^p).
	\end{align}
	Let $(\chi_{\delta})$ be a truncation on $\mathbb{R}$, then it holds, for $p\in[2,\infty),$
	\begin{align*}
		&\mathbb{E}\int_{[0,T]\times\mathbb{T}^N\times{R}}|\zeta|^{p-2}d{m}(x,t,\zeta)\le\,\liminf_{\delta\to0}\mathbb{E}\int_{[0,T]\times\mathbb{T}^N\times\mathbb{R}}|\zeta|^{p-2}\chi_{\delta}(\zeta) d{m}(t,x,\zeta)\\
		&=\liminf_{\delta\to 0}\lim_{\tau_n\to\,0}\mathbb{E}\int_{[0,T]\times\mathbb{T}^N\times{R}}|\zeta|^{p-2}\chi_{\delta}(\zeta)dm^{\tau_n}(t,x,\zeta)\le\,C,
	\end{align*}
	where the last inequality follows from \eqref{4.13}. It shows that $m$ vanishes for large $\zeta$.
	
	\noindent
	Finally, by the same approach as above, we deduce that there exist measures $o_1,\, o_2$ and $o_3$ such that
	$$m_1^{\tau_n}\overset{w^*}\to o_1,\,\,\,\,\,\eta_1^{\tau_n}\overset{w^*}\to\,o_2\,\,\,\,\,\eta_2^{\tau_n}\overset{w^*}\to\,o_3\,\,\,\,\, in\,\,L_w^2({\Omega};\mathcal{M}_b(\mathbb{T}^N\times[0,T]\times\mathbb{R}) $$
	Then from \eqref{4.7} we obtain
	$${\mathbb{E}}\int_0^T\int_{\mathbb{T}^N}\bigg|\mbox{div}\int_0^{u^{\tau_n}}\sigma(\zeta)\,d\zeta\bigg|^2 \,dx\,dt\le\,C.$$
	Therefore as an application of the Banach-Alaoglu theorem we get that, up to subsequence, $\mbox{div}\int_0^{u^{\tau_n}}\sigma(\zeta)\,d\zeta$ converges weakly in $L^2({\Omega}\times[0,T]\times\mathbb{T}^N)$. On the other hand, from the strong convergence  and the fact that $\sigma\in C_b(\mathbb{R})$,  using integration by parts, we conclude for all $\varphi\in C^1([0,T]\times\mathbb{T}^N),\,\mathbb{P}-a.s.,$
	$$\int_0^T\int_{\mathbb{T}^N}\bigg(\mbox{div}\int_0^{u^{\tau_n}}\sigma(\zeta) d\zeta\bigg)\varphi(t,x) dx dt\to \int_0^T\int_{\mathbb{T}^N}\bigg(\mbox{div}\int_0^{{u}}\sigma(\zeta)\,d\zeta\bigg)\varphi(t,x)\,dx\,dt,$$
	and therefore ${\mathbb{P}}$-a.s.
	\begin{align}\label{4.14}
		\mbox{div}\int_0^{{u}^{\tau_n}}\sigma(\zeta)d\zeta\overset{w}\to\,\mbox{div}\int_0^{{u}}\sigma(\zeta)\,,d\zeta\,\,\,\,\, in\,\, L^2([0,T]\times\mathbb{T}^N).
	\end{align}
	Since any norm is weakly lower semicontinuous, it implies that for all $\phi\in C_0([0,T]\times\mathbb{T}^N\times{\mathbb{R}})$ and fixed $\zeta\in\mathbb{R}$, $\mathbb{P}$-a.s.
	\begin{align*}
		&\int_0^T\int_{\mathbb{T}^N}\bigg|\mbox{div}\int_0^{{u}}\sigma(\zeta)d\zeta\bigg|^2 \phi^2(t,x,\zeta)\,dx\,dt\le\,\liminf_{n\to\infty}\int_0^T\int_{\mathbb{T}^N}\bigg|\mbox{div}\int_0^{u^{\tau_n}}\sigma(\zeta)\,d\zeta\bigg|^2\phi^2(t,x,\zeta)\,dx\,dt
	\end{align*}
	and by the Fatou's lemma, we have ${\mathbb{P}}$-a.s.
	\begin{align*}
		&\int_0^T\int_{\mathbb{T}^N}\int_{\mathbb{R}}\bigg|\mbox{div}\int_0^{{u}}\sigma(\zeta)\,d\zeta\bigg|^2\phi^2(t,x,\zeta)\,d\delta_{{u}=\zeta}\,dx\,dt\\
		&\qquad\qquad\qquad\qquad\,\le\, \liminf_{n\to \infty}\int_0^T\int_{\mathbb{T}^N}\int_{\mathbb{R}}\bigg|\mbox{div}\int_0^{u^{\tau_n}}\sigma(\zeta)\,d\zeta\bigg|^2\phi^2(t,x,\zeta)\,d\delta_{u^{\tau_n}=\zeta}\,dx\,dt
	\end{align*}
It shows that $\p$-a.s., ${\eta}_2\le\,o_3$.
	Recall that, upto subsequece, $\p$-almost surely, $u^{\tau_n}(x,t)\to{u}(x,t)$ for $(t,x)\notin N$ with $N\subset\,\mathbb{T}^N\times[0,T]$ negligible subset. $\mathbb{P}$-almost surely, fixing $z\in\mathbb{R}^N$, we thus have also $u^{\tau_n}(x+z,t)\to{u}(x+z,t)$ for any $(x,t)$ not in some negligible subset $N_z$ of $\mathbb{T}^N\times[0,T]$.
	Hence we have
	$$|u^{\tau_n}(x+z,t))-\zeta|\mathbbm{1}_{Conv\{u^{\tau_n}(x,t),u^{\tau_n}(x+z,t)\}}(\zeta)\to|u(x+z,t))-\zeta|\mathbbm{1}_{Conv\{{u}(x,t),{u}(x+z,t)\}}(\zeta)\,\,\, $$
	$as\,\,\, n\to \infty,$
	for any $(x,t,\zeta)\in\mathbb{T}^N\times[0,T]\times\mathbb{R}$ such that $(x,t)\notin N\cup N_z$ and $\zeta\ne {u}(t,x)$. Note that latter subset of $\mathbb{T}^N\times[0,T]\times\mathbb{R}$, on which previous convergence does not hold, has Lebesgue measure zero. We can then use Fatou's lemma to conclude $\p$-a.s., ${\eta}_1\,\le\,o_2$.
	% We recognize the complementary of the graph of ${u}$; the latter has zero Lebesgue measure in $[0,T]\times\mathbb{T}^N\times{R}$.
	Hence $${m}=o_1+o_2\ge \,{\eta}_1+\eta_2\,\,\,\,\,\,\,\mathbb{P}-\text{almost surely}.$$
	Concerning the chain rule formula \eqref{chain}, due the regularity of $u^{\tau_n}$, we observe that, for all $u^{\tau_n}$ and for any $\varphi\in C_b(\mathbb{R})$,
	\begin{align}\label{ps}
		\mbox{div}\int_0^{u^{\tau_n}} \varphi(\zeta) \sigma(\zeta) d\zeta=\varphi(u^{\tau_n})\mbox{div}\int_0^{u^{\tau_n}}\sigma(\zeta) d\zeta\,\qquad\,\,\,\text{in}\,\qquad\,\mathcal{D}'(\mathbb{T}^N),\,\,\,\text{a.e.}\,\,(\omega,t).
	\end{align}
	Furthermore, it is easy to obtain \eqref{4.14} with the integrant $\sigma$ replaced by $\varphi\,\sigma$, we can pass to the limit on the left hand side and making use of the strong-weak convergence, also on the right hand side of \eqref{ps}. The proof is complete.
\end{proof}
\subsection{Kinetic solution}\label{section 5.5}
As a consequence of convergence results, stated in previous subsections and as proposed in \cite[Lemma 2.1, Propostion 4.9]{sylvain}, we can pass to the limit in all the terms of \eqref{approximate formulation}. Convergence of the stochastic integral can be verified easily using strong convergence and uniforn $L^p$-bound of $u^{\tau_n}$.

\noindent
For all $\varphi\in C_c^2(\mathbb{T}^N\times\mathbb{R}),$ for ${\mathbb{P}}$-allmost surely, there exists a negligible set $N_0\subset[0,T]$ such that for all $ t\in[0,T]\backslash N_0$,
\begin{align}\label{4.17}
	\langle {f}(t),\varphi \rangle &= \langle {f}_0, \varphi \rangle + \int _0^t \langle {f}(s), F'(\zeta)\cdot\nabla\varphi \rangle ds+\int_0^t\langle f(s),A:D^2\varphi\rangle ds- \int_0^t \langle {f}(s), g_x^\lambda[\varphi] \rangle ds\notag\\
	&\qquad+\sum_{k=1}^\infty \int_0^t\int_{\mathbb{T}^N}\int_{\mathbb{R}} \beta_k(x,\zeta) \varphi (x,\zeta) d{\mathcal{V}}_{x,s}(\zeta) dx d w_k(s)\notag\\
	&\qquad +\frac{1}{2} \int_0^t \int_{\mathbb{T}^N}\int_{\mathbb{R}} \partial_{\zeta} \varphi( x,\zeta) \beta^2(x,\zeta) d{\mathcal{V}}_{x,s}(\zeta) dx ds  - {m}(\partial_{\zeta}\varphi)([0,t]),
\end{align}
 We now want to show that the above formulation holds for all  time $t\in[0,T]$. For that, we use the following Proposition.
\begin{proposition}\label{th4.15}
	There exists a measurable subset ${\Omega}^+$ of ${\Omega}$ of probability one, a random Young measure ${\mathcal{V}}^+$ on $\mathbb{T}^N\times(0, T)$ such that
	\begin{enumerate}
		\item[1.] for all ${\omega}\in{\Omega}^+$, for almost every $(x,t)\in\mathbb{T}^N\times(0,T)$, the probability measures ${\mathcal{V}}_{x,t}^+$  and ${\mathcal{V}}_{x,t}$ coincide.
		\item[2.] the kinetic function ${f}^+(x,t,\zeta):= {\mathcal{V}}_{x,t}^+(\zeta,+\infty)$ satisfies: for all ${\omega}\in{\Omega}^+$, for all $\varphi\in C_c(\mathbb{T}^N\times(0,T))$, $t\mapsto \langle {f}^+(t), \varphi \rangle$ is c\'adl\'ag.
		\item[3.] The random Young measure ${\mathcal{V}}^+ $ satisfies
		$${\mathbb{E}}\sup_{t\in[0,T]}\int_{\mathbb{T}^N}\int_{\mathbb{R}}|\zeta|^p d{\mathcal{V}}_{x,t}^+(\zeta) dx\,\,\le\,\, C_p.$$
	\end{enumerate}
\end{proposition}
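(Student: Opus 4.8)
The plan is to build a càdlàg modification of the kinetic function directly from the integral identity \eqref{4.17}, following the scheme of \cite[Lemma 2.1, Proposition 4.9]{sylvain}. First I would fix a countable family $\mathcal{D}\subset C_c^2(\mathbb{T}^N\times\mathbb{R})$ that is dense in $C_c(\mathbb{T}^N\times\mathbb{R})$ for the uniform norm. For each fixed $\varphi\in\mathcal{D}$, denote by $G_\varphi(t)$ the right-hand side of \eqref{4.17}, which is defined for every $t\in[0,T]$. The key observation is that, $\mathbb{P}$-almost surely, $t\mapsto G_\varphi(t)$ is càdlàg: the drift contributions $\int_0^t\langle f(s),\cdots\rangle\,ds$ are continuous in $t$, the stochastic integral admits a continuous modification by the Burkholder--Davis--Gundy inequality together with the moment bound \eqref{4.10}, and the measure term $t\mapsto m(\partial_\zeta\varphi)([0,t])$ is càdlàg, being the distribution function of the finite signed Borel measure $A\mapsto m(\partial_\zeta\varphi)(A)$ on $[0,T]$. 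Intersecting the associated full-probability sets over the countable family $\mathcal{D}$ produces a set $\Omega^+$ of probability one on which $t\mapsto G_\varphi(t)$ is càdlàg simultaneously for every $\varphi\in\mathcal{D}$.

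Next, for every $t\in[0,T]$ and $\omega\in\Omega^+$ I would define the candidate $f^+(t)$ by requiring
\begin{align*}
\langle f^+(t),\varphi\rangle:=G_\varphi(t),\qquad \varphi\in\mathcal{D},
\end{align*}
and extend the pairing to all $\varphi\in C_c(\mathbb{T}^N\times\mathbb{R})$ by density; the càdlàg property of $t\mapsto\langle f^+(t),\varphi\rangle$ is then inherited from that of each $G_\varphi$, and measurability of $\mathcal{V}^+$ is inherited from the measurable objects entering \eqref{4.17}. Since the set $D\ni 0$ from Proposition \ref{D} has full Lebesgue measure in $[0,T]$ and since \eqref{4.17} gives $\langle f(t),\varphi\rangle=G_\varphi(t)$ for $t\in D$, we obtain $f^+(t)=f(t)$ for all $t\in D$; this already yields the coincidence $\mathcal{V}^+_{x,t}=\mathcal{V}_{x,t}$ for a.e. $(x,t)$ asserted in item (1).

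The heart of the argument is to verify that for \emph{every} fixed $t$ the object $f^+(t)$ is genuinely a kinetic function, i.e. that it is represented by a Young measure $\mathcal{V}^+_{\cdot,t}$ vanishing at infinity. For this I would take a sequence $s_n\downarrow t$ with $s_n\in D$: the functions $f(s_n)$ are kinetic functions, uniformly bounded in $L^\infty(\mathbb{T}^N\times\mathbb{R})$ and satisfying the uniform moment control \eqref{4.10}, so Corollary \ref{th2.7} applies and produces, along a subsequence, a kinetic-function limit whose pairings against $\mathcal{D}$ equal $\lim_n G_\varphi(s_n)=G_\varphi(t)$; by the density of $\mathcal{D}$ the limit is uniquely determined and equals $f^+(t)$, which is therefore a kinetic function with an associated Young measure vanishing at infinity. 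The moment bound in item (3) then follows exactly as in part (2) of Proposition \ref{D}, from the lower semicontinuity of the functional $\mathcal{V}\mapsto\sup_{t}\int_{\mathbb{T}^N}\int_{\mathbb{R}}|\zeta|^p\,d\mathcal{V}_{x,t}(\zeta)\,dx$ under the weak-$*$ limit together with \eqref{4.10}.

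The step I expect to be the main obstacle is precisely this last identification: showing that the pointwise-in-$t$ right limits define, for each single $t$ and uniformly over $\omega\in\Omega^+$, a bona fide Young measure rather than a sub-probability measure that has lost mass at infinity. This is controlled only because the moment estimate \eqref{4.10} is genuinely uniform in $t$, which prevents any escape of mass to infinity in the weak-$*$ limit; keeping this uniformity throughout the construction is the delicate point.
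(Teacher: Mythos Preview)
Your proposal is correct and follows essentially the same route as the paper, which simply defers to \cite[Proposition 4.8]{sylvain}: build the c\`adl\`ag modification from the right-hand side of \eqref{4.17} over a countable dense family of test functions, and then identify each time slice as a genuine kinetic function via the compactness in Corollary~\ref{th2.7} together with the uniform-in-$t$ moment bound \eqref{4.10}. The one point worth tightening is the order of the argument: the extension of $\varphi\mapsto G_\varphi(t)$ from $\mathcal{D}$ to all of $C_c(\mathbb{T}^N\times\mathbb{R})$ by density requires a bound $|G_\varphi(t)|\le\|\varphi\|_{L^1}$, which you obtain only \emph{after} approximating $t$ from the right by $s_n\in D$ (where $G_\varphi(s_n)=\langle f(s_n),\varphi\rangle$ with $f(s_n)\in[0,1]$) and passing to the limit---so the identification of $f^+(t)$ as a kinetic function should precede, not follow, the density extension.
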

\begin{proof}
	For proof, we refer to \cite[Proposition 4.8]{sylvain}.
\end{proof}
We will now consider only the c\'adl\'ag versions. We replace ${\mathcal{V}}$ by ${\mathcal{V}}^+$, ${f}$ by ${f}^+$ (using of It\^o isometry) and making use of \cite[Lemma 4.14]{sylvain} for measurability issue. Let us recall the fact that \eqref{4.17} is now true for all t. For all $t\in[0,T]$, for all $\varphi\in C_c^2(\mathbb{T}^N\times\mathbb{R})$ 
\begin{align}\label{4.26}
	\langle {f}^+(t),\varphi \rangle &= \langle {f}_0, \varphi \rangle + \int _0^t \langle {f}^+(s), F'(\zeta)\cdot\nabla\varphi \rangle ds +\int_0^t\langle f^{+}(s),A:D^2\varphi\rangle ds- \int_0^t \langle {f}^+(s), g_x^\lambda[\varphi] \rangle ds\notag\\
	&\qquad +\sum_{k=1}^\infty \int_0^t\int_{\mathbb{R}}\int_{\mathbb{T}^N} \beta_k(x,\zeta) \varphi (x,\zeta) d{\mathcal{V}}_{x,s}^+(\zeta) dx dw_k(s)\notag\\
	&\qquad +\frac{1}{2} \int_0^t\int_{\mathbb{R}} \int_{\mathbb{T}^N} \partial_{\zeta} \varphi( x,\zeta ) \beta^2(x,\zeta) d{\mathcal{V}}_{x,s}^+(\zeta) dx ds  - {m}(\partial_{\zeta}\varphi)([0,t])
\end{align}
$\p$-a.s..
Here we are not interested in this form of solution, ${f}^+$ is kinetic function. But, it should be equlibrium for all $t\in[0,T]$. In the case of stochastic conservation laws, the authors prove the existence of solution from reduction of generalized solution (cf.\cite[Proposition 2.8\,$\&$\,Theorem 3.2]{sylvain}). Here we also have similar circumstances. We can follow similar approach to get required form. Indeed, we have  ${\mathbb{P}}$-almost surely, almost $t\in[0,T]$,   ${f}^+(t)=\mathbbm{1}_{{u}(t)\textgreater\zeta}$ for all $\zeta\in\mathbb{R}$. For getting equlibrium form of ${f}^+$ for all $t\in[0,T]$, we have to repeat proof of Theorem \ref{th3.6} for ${f}^+$ with $m\ge\,\eta_1+\eta_2$. After that  we have, $\p$-almost surely, for all $t\in[0,T]$
$${f}^+(t)=\mathbbm{1}_{\tilde{u}(t)\textgreater\zeta}$$
where 
$$\tilde{u}(t)=\int_{\mathbb{R}} ({f}^+(t)-\mathbbm{1}_{0\textgreater\zeta})\,\,d\zeta$$
Since ${\mathbb{P}}$-almost surely, $u(x,t)=\tilde{u}(x,t)$ almost $(x,t)\in\mathbb{T}^N\times[0,T]$, therefore ${(\tilde{u}(t))_{t\in[0,T]}}$ and $f^+(t)=\mathbbm{1}_{\tilde{u}(t)\textgreater\zeta}$, satisfy the all points of Definition \ref{kinetic solution} of kinetic solution. It shows that $(\tilde{u}(t))_{t\in[0,T]}$ is kinetic solution.
\subsection{Existence-general initial data}
In this final subsection, we prove an existence proof in the genral case of $u_0\in L^p(\Omega; L^p(\mathbb{T}^N)),$ for all $p\in[1,+\infty)$. It is direct consequence of the previous subsections. We approximate the initial conditions by a sequence $\{u_0^\delta\}\subset\,L^p(\Omega; C^\infty(\mathbb{T}^N)),$ $p\in[1,+\infty)$ such that $u_0^\delta\,\to\,u_0$ in $L^1(\Omega;L^1(\mathbb{T}^N))$. That is, the initial conditions $u_0^\delta$ can be defined as a pathwise molification of $u_0$ so that holds true
\begin{align}\label{4.27}
	\|u_0^\delta\|_{L^p(\Omega; L^p(\mathbb{T}^N))}\,\le\,\|u_0\|_{L^p(\Omega; L^p(\mathbb{T}^N))}
\end{align}
According to the previous subsections, for each $\delta\in(0,1)$, there exists a kinetic solution $u^\delta$ to \eqref{1.1} with initial condtion $u_0^\delta$. As consequence of the contraction principle, for all $t\in[0,T]$
\[\mathbb{E}\|u^{\delta_1}(t)-u^{\delta_2}(t)\|_{L^1(\mathbb{T}^N)}\,\le\,\mathbb{E}\|u_0^{\delta_1}-u_0^{\delta_2}\|_{L^1(\mathbb{T}^N)}\,\,\,\qquad \delta_1, \delta_2 \in (0,1),\]
It shows that there exists $u\in L^p_{\mathcal{P}_T}(\Omega\times[0,T]\times\mathbb{T}^3)$ such that $u^{\delta}$ converges to $u$ in $L^1_{\mathcal{P}_T}(\Omega\times[0,T]\times\mathbb{T}^3)$ as $\delta\,\to\,0$.
By \eqref{4.27} and \eqref{lp estimate}, we still have spatial regularity and the uniform energy estimate, that is for $p\in[1,+\infty)$, 
	$$\mathbb{E}\, \sup_{0\le t\le T}\|u^\delta(t)\|_{L^p(\mathbb{T}^N)}^p\,\le\,C({T,u_0}),$$
	$$\mathbb{E}\big[\|u^\delta\|_{L^2(0,T;H^{\lambda}(\mathbb{T}^N))}^2\big]\,\le\,C(T,u_0),$$
as well as (using the usual notation)
\begin{align*}
	\mathbb{E}|m^\delta(\mathbb{T}^N\times[0,T]\times\mathbb{R})|^2\,\le\,C_{T,u_0}.
\end{align*}
We can also conclude that Lemma \ref{m} also true for sequence of kinetic measure $m^\delta$.
With these informations in hand, we are ready to pass the limit in \eqref{2.6} and conclude that there exists a kinetic solution $(u(t))_{t\in[0,T]}$ to \eqref{1.1}. The proof of Theorem \ref{th2.10} is thus complete.
\begin{remark}[Convergence of approximations at fixed time t]\label{fixed time}
	We define $$B_{at}^\omega=\{t\in[0,T]; m(\mathbb{T}^N\times\{ t \}\times\mathbb{R})\,\textgreater\,0\}.$$
	$\mathbb{P}$-almost
	surely, there is a countable subset $B_{at}^{\omega} \subset [0, T]$ such that for all $t\in[0, T]\setminus B_{at}^\omega$, for all $\varphi\in C_c(\mathbb{T}^N \times\mathbb{R})$, $\langle f^\varepsilon(t),\varphi\rangle \to  $$\langle f(t), \varphi\rangle$ ( cf. \cite[Proposition 4.9]{sylvain}). Making use of Proposition \ref{th2.8} gives, almost surely,
	for all $t\in [0, T ]\setminus B_{at}^\omega $, $u^\varepsilon (t)\to u(t)$ in $L^p(\mathbb{T}^N)$. We have $\p$-almost surely, for all $t\in[0,T]$, $f(t)=f^-(t)$ (cf.\cite[Proposition 2.11]{sylvain}). It follows that $B_{at}^\omega$ is empty. This gives that $\mathbb{P}$-almost surely, for all $t\in[0, T ]$, $u^\varepsilon (t) \to u(t)$ in $L^p(\mathbb{T}^N)$.
\end{remark}
\section{Continuous dependence estimate: proof of Theorem \ref{main theorem 2}}\label{section5}
In this section, we develop a general framework for the continuous dependence estimate by using BV estimate of kinetic solutions.
\subsection{BV estimate}
\begin{thm}
	Let $(u(t))_{t\in[0,T]}$ be a kinetic solution to \eqref{E.1} with initial data $u_0$. Then the following BV estimate holds: for all $t\in[0,T]$
	\begin{align}
		\mathbb{E}[\|u(t)\|_{BV}]\,\le\,\mathbb{E}[\|u_0\|_{BV}].
	\end{align}
\end{thm}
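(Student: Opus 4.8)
The plan is to combine the spatial translation invariance of equation \eqref{E.1} with the $L^1$-contraction principle \eqref{contraction principle} and the difference-quotient characterization of the $BV$ seminorm. Recall that for any $w\in L^1(\mathbb{T}^N)$ and each coordinate direction $e_i$ one has the pathwise bound $\|\tau_z w-w\|_{L^1(\mathbb{T}^N)}\le |z|\,|Dw|(\mathbb{T}^N)$ together with
\begin{align*}
	\int_{\mathbb{T}^N}|\partial_{x_i}w|=\lim_{h\to 0^+}\frac{\|\tau_{h e_i}w-w\|_{L^1(\mathbb{T}^N)}}{h},
\end{align*}
where $\tau_z$ denotes the shift $\tau_z w=w(\cdot+z)$. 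Summing the directional contributions recovers $\|w\|_{BV(\mathbb{T}^N)}$, so it suffices to control each translate $\tau_z u(t)$ in $L^1$.

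First I would show that $\tau_z u$ is again a kinetic solution of \eqref{E.1}, but with initial datum $\tau_z u_0$. This is the structural heart of the argument: the flux $F$, the diffusion matrix $A$ and its square root $\sigma$ are autonomous, the kernel $\mu(z)=|z|^{-N-2\lambda}$ is radial so the fractional operator $g_x^\lambda$ commutes with spatial translations, and, crucially, in \eqref{E.1} the noise coefficient $\Phi=\Phi(u)$ carries no explicit $x$-dependence, so $\Phi(\tau_z u)\,dB=\tau_z(\Phi(u)\,dB)$ with the \emph{same} driving process $B$. Applying $\tau_z$ to the kinetic formulation \eqref{2.6} therefore yields the same formulation for $\mathbbm{1}_{\tau_z u(t)>\zeta}$ with the translated kinetic measure and the translated defect measures $\eta_1,\eta_2$; hence $\tau_z u$ is a bona fide kinetic solution with datum $\tau_z u_0$. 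It is precisely this step that breaks down for $\Phi=\Phi(x,u)$, which is why the $BV$ bound is only available in the spatially homogeneous case.

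Next I would invoke the contraction principle \eqref{contraction principle} with $u_1=\tau_z u$ and $u_2=u$, giving, for every $z$ and every $t\in[0,T]$,
\begin{align*}
	\mathbb{E}\|\tau_z u(t)-u(t)\|_{L^1(\mathbb{T}^N)}\le \mathbb{E}\|\tau_z u_0-u_0\|_{L^1(\mathbb{T}^N)}\le |z|\,\mathbb{E}|Du_0|(\mathbb{T}^N),
\end{align*}
where the last inequality uses the pathwise translation bound together with $u_0\in L^1(\Omega;BV(\mathbb{T}^N))$. Specializing to $z=h e_i$, dividing by $h$, and passing to the limit $h\to 0^+$ via Fatou's lemma,
\begin{align*}
	\mathbb{E}\int_{\mathbb{T}^N}|\partial_{x_i}u(t)|=\mathbb{E}\Big[\lim_{h\to 0^+}\tfrac{\|\tau_{h e_i}u(t)-u(t)\|_{L^1}}{h}\Big]\le \liminf_{h\to 0^+}\frac{\mathbb{E}\|\tau_{h e_i}u(t)-u(t)\|_{L^1}}{h}\le \mathbb{E}\int_{\mathbb{T}^N}|\partial_{x_i}u_0|.
\end{align*}
Summing over $i=1,\dots,N$ yields $\mathbb{E}\|u(t)\|_{BV}\le\mathbb{E}\|u_0\|_{BV}$, which is the claim; as a by-product this also shows $u(t)\in BV(\mathbb{T}^N)$ almost surely.

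I expect the main obstacle to be the first step, namely verifying rigorously that the shifted field $\tau_z u$ satisfies the kinetic formulation with datum $\tau_z u_0$, since one must track the invariance of the nonlocal term and of both defect measures $\eta_1,\eta_2$ under $\tau_z$, and the argument hinges entirely on the $x$-independence of $\Phi$. The use of Fatou's lemma to move the $h\to 0^+$ limit past the expectation (the reverse inequality $\mathbb{E}\sup_z\ge\sup_z\mathbb{E}$ being unavailable) is the other point requiring care, but it is routine once the limit characterization of the directional variation is used in place of the supremum.
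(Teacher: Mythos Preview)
Your proposal is correct and follows essentially the same approach as the paper: translation invariance of \eqref{E.1} (which relies on $\Phi=\Phi(u)$ having no explicit $x$-dependence) combined with the $L^1$-contraction to control difference quotients. The only cosmetic difference is that the paper returns to the doubling-of-variables inequality from Proposition~\ref{th3.5} and sends $\delta\to 0$, then $\epsilon\to 0$, to obtain $\mathbb{E}\|\tau_h u(t)-u(t)\|_{L^1}\le \mathbb{E}\|\tau_h u_0-u_0\|_{L^1}$, whereas you invoke the contraction principle \eqref{contraction principle} directly as a black box; your route is marginally cleaner since it reuses an already-established result.
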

\begin{proof}
	By the substitution $z=x+h$ in \eqref{E.1}, it implies that that, if $u(x,t)$ solves \eqref{E.1} with initial data $u_0(x)$, then u(z,t)=u(x+h,t) solves \eqref{1.1} with initial data $u_0(x+h)$. From estimates given in the proof of Theorem $\ref{th3.6}$ we have for all $t\in[0,T]$,
	\begin{align*}
		&\mathbb{E}\int_{(\mathbb{T}^N)^2}\int_{\mathbb{R}^2}f(x,t,\xi)\bar{f}(y+h,t,\zeta)\kappa_{\delta}(\xi-\zeta)\theta_{\epsilon}(x-y)d\xi d\zeta dx dy\\
		&\le\mathbb{E}\int_{(\mathbb{T}^N)^2}\int_{\mathbb{R}^2}\theta_{\epsilon}(x-y)\kappa_{\delta}(\xi-\zeta)f_{0}(x,\xi)\bar{f}_{0}(y+h,\zeta)d\xi d\zeta dx dy+Ct\delta^{\lambda_{F_1}}\epsilon^{-1}+Ct\delta^{\lambda_{F_2}}+ Ct\epsilon^{-2}\,\delta^{2\gamma_a}.\notag
	\end{align*}
	taking $\delta\to 0$, then for all $t\in[0,T]$
	\begin{align*}
		&\mathbb{E}\int_{(\mathbb{T}^N)^2}(u(y+h,t)-u(x,t))_{+}\theta_\epsilon(x-y) dx dy\le\mathbb{E}\int_{(\mathbb{T}^N)^2}(u_0(y+h)-u_0(x))_{+}\theta_{\epsilon}(x-y) dx dy \notag\\
	\end{align*}	
	letting $\epsilon\to0$, then it implies that for all $t\in[0,T]$
	\begin{align*}
		\mathbb{E}\int_{\mathbb{T}^N}\frac{|u(x+h,t)-u(x,t)|}{|h|}dx\le\,\mathbb{E}\int_{\mathbb{T}^N}\frac{|u_0(x+h)-u_0(x)|}{|h|}dx\\
	\end{align*}
We conclude that  for all $t\in[0,T]$,
	$$\mathbb{E}[TV_x(u(\cdot,t))]\le\,\mathbb{E}[TV_x(u_0)].$$
	This finishes the proof.
\end{proof}
\noindent
Now we apply Kruzhkov's doubling of variable technique and try to bound the difference of kinetic solutions. Proof of following proposition is similar as Proposition \ref{th3.5}, so we left to reader.
\begin{proposition}
	Let $(u(t))_{t\in[0,T]}$ be a kinetic solution to \eqref{E.1} with initial data $u_0$, and let $(v(t))_{t\in[0,T]}$ be a kinetic solution to \eqref{E.2} with intial data $v_0$. Then, for all $t\in[0,T]$ and non-negative test functions $\theta\in{C}^\infty(\mathbb{T}^N)$,\, $\kappa\in{C_{c}^\infty(\mathbb{R})}$, we have 
	\begin{align}\label{ku}
		&\mathbb{E}\int_{(\mathbb{T}^N)^2}\int_{(\mathbb{R})^2}\theta(x-y)\kappa(\xi-\zeta)f_1(x,t,\xi)\bar {f}_2(y,t,\zeta)d\xi d\zeta dx dy\notag\\
		&\leq\mathbb{E}\bigg[\int_{(\mathbb{T}^N)}\int_{(\mathbb{R}^2)}\theta(x-y)\kappa(\xi-\zeta) f_{1,0} (x,\xi)\bar f_{2,0} (y,\zeta)d\xi d\zeta dx dy+ \mathcal{R}_{\theta}+ \mathcal{R}_{\kappa}+J+K\bigg], 
	\end{align}
	where
	\begin{align}
		\mathcal{R}_{\theta}\notag&=\int_0^t\int_{(\mathbb{T}^N)^2}\int_{\mathbb{R}^2}f_1(x,s,\xi)\bar{f}_2(y,s,\zeta)(F'(\xi)-G'(\zeta))\kappa(\xi-\zeta)d\xi d\zeta\cdot \nabla\theta(x-y)dx dy ds,\notag\\
		\mathcal{R}_{\kappa}\notag&=\frac{1}{2}\int_{(\mathbb{T}^N)^2}\theta(x-y)\int_0^t\int_{\mathbb{R}^2}\kappa(\xi-\zeta)|\Phi(\xi)-\Psi(\zeta)|^2d\mathcal{V}_{x,s}^{1}\oplus\mathcal{V}_{y,s}^{2}(\xi,\zeta)dx dy ds\notag\\
		J&=-\int_0^t\int_{(\mathbb{T}^N)^2}\int_{\mathbb{R}^2}f_1 (x,s,\xi)\bar {f}_2(y,s,\zeta)\kappa(\xi-\zeta)g_x^\lambda(\theta(x-y)))d\xi d\zeta dx dy ds\notag\\
		&\qquad-\int_0^t\int_{(\mathbb{T}^N)^2}\int_{\mathbb{R}^2}f_1 (x,s,\xi)\bar {f}_2(y,s,\zeta)\kappa(\xi-\zeta)g_y^\beta(\theta(x-y)))d\xi d\zeta dx dy ds\notag\\
		&\qquad-\int_0^t\int_{(\mathbb{T}^N)^2}\int_{\mathbb{R}^2}f_1(x,s,\xi)\partial_{\xi}\kappa(\xi-\zeta)\theta(x-y)d\eta_{v,2}(y,s,\zeta)dxd\xi \notag\\
		&\qquad+\int_0^t\int_{(\mathbb{T}^N)^2}\int_{\mathbb{R}^2}\bar{f}_2(y,s,\zeta)\partial_{\zeta}\kappa(\xi-\zeta)\theta(x-y)d\eta_{u,2}(x,s,\xi)dy d\zeta,\notag\\
		K&=\int_0^t\int_{(\mathbb{T}^N)^2}\int_{\mathbb{R}^2}f_1\bar{ f}_2(A(\xi)+B(\zeta)):D_x^2\theta(x-y)\kappa(\xi-\zeta)d\xi d\zeta dx dy ds\notag\\
		&\qquad-\int_0^t\int_{\mathbb{T}^N}^2\int_{\mathbb{R}^2}\theta(x-y)\kappa(\xi-\zeta) d\mathcal{V}_{x,s}^1(\xi)dx d\eta_{v,3}(y,s,\zeta)\notag\\
		&\qquad-\int_0^t\int_{\mathbb{T}^N}^2\int_{\mathbb{R}^2}\theta(x-y)\kappa(\xi-\zeta)d\mathcal{V}_{y,s}^2(\zeta)\,dy\,d\eta_{u,3}(x,s,\xi).\notag
	\end{align}
\end{proposition}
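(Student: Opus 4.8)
The plan is to follow the proof of Proposition~\ref{th3.5} essentially verbatim, the only difference being that the data of \eqref{E.1} and \eqref{E.2} must be carried separately through every step; I indicate only the places where the two-equation structure enters. First I would fix test functions $\phi_1\in C_c^\infty(\mathbb{T}_x^N\times\mathbb{R}_\xi)$ and $\phi_2\in C_c^\infty(\mathbb{T}_y^N\times\mathbb{R}_\zeta)$ and write the semimartingale decompositions of $\langle f_1^+(t),\phi_1\rangle$ and $\langle \bar f_2^+(t),\phi_2\rangle$ coming from the kinetic formulations of \eqref{E.1} and \eqref{E.2}. The finite-variation parts involve $F',\,g_x^\lambda,\,A$ for the first solution and $G',\,g_y^\beta,\,B$ for the second, while the martingale parts are $M_1(t)=\sum_k\int_0^t\!\int\Phi_k(\xi)\phi_1\,d\mathcal{V}^1_{x,s}\,dw_k(s)$ and its counterpart built from the $\gamma_k$-components $\Psi_k$ of $\Psi$. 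The decisive structural feature, which I would stress, is that \eqref{E.1} and \eqref{E.2} are driven by the \emph{same} cylindrical Wiener process $B$.

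Next I would apply the It\^o product formula to $\langle f_1^+(t),\phi_1\rangle\,\langle \bar f_2^+(t),\phi_2\rangle$, treating the finite-variation$\times$finite-variation and martingale$\times$finite-variation contributions by integration by parts for functions of finite variation exactly as in Proposition~\ref{th3.5}. The martingale cross-variation then produces $-\sum_k\Phi_k(\xi)\Psi_k(\zeta)$, the minus sign stemming from $\bar f_2=1-f_2$; combined with the two single-equation It\^o corrections $\tfrac12\sum_k|\Phi_k(\xi)|^2$ and $\tfrac12\sum_k|\Psi_k(\zeta)|^2$ this assembles, after taking expectation, into $\tfrac12\sum_k|\Phi_k(\xi)-\Psi_k(\zeta)|^2=\tfrac12|\Phi(\xi)-\Psi(\zeta)|^2$. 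This yields the analogue of \eqref{un} for the mixed pair $(u,v)$.

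I would then relax the compact support of $\varphi$ using the tightness at infinity of $m_u,m_v$ and $\mathcal{V}^1,\mathcal{V}^2$, extend the identity to $\varphi\in C_b^\infty(\mathbb{T}_x^N\times\mathbb{R}_\xi\times\mathbb{T}_y^N\times\mathbb{R}_\zeta)$ by density and a truncation argument, and finally specialize to $\varphi=\theta(x-y)\kappa(\xi-\zeta)$. The identities $(\nabla_x+\nabla_y)\varphi=0$ and $(\partial_\xi+\partial_\zeta)\varphi=0$ collapse the transport contribution into $(F'(\xi)-G'(\zeta))\cdot\nabla\theta$, giving $\mathcal{R}_\theta$, and the noise correction into $\mathcal{R}_\kappa$. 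Because $\lambda\ne\beta$, the two nonlocal terms $g_x^\lambda[\theta(x-y)]$ and $g_y^\beta[\theta(x-y)]$ no longer coincide and must be kept separate, producing the first two lines of $J$; integrating by parts in $\xi,\zeta$ against the fractional dissipation measures $\eta_{u,2},\eta_{v,2}$ yields the remaining two lines of $J$, while the second-order terms give $K$ with $A(\xi)+B(\zeta)$ together with the parabolic measures $\eta_{u,3},\eta_{v,3}$.

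Finally, splitting $m_u=m_{u,1}+\eta_{u,2}+\eta_{u,3}$ and $m_v=m_{v,1}+\eta_{v,2}+\eta_{v,3}$, the leftover measure contributions carrying $m_{u,1},m_{v,1}$ form a term $K'$ which, just as in Proposition~\ref{th3.5}, equals $-\int\varphi\,d\mathcal{V}^{1,-}\,dm_{v,1}-\int\varphi\,d\mathcal{V}^{2,+}\,dm_{u,1}\le 0$ since $\varphi\ge0$; discarding $K'$ converts the equality into the asserted inequality \eqref{ku}. The step I expect to be the main obstacle is the bookkeeping of the martingale cross-variation under the shared noise, ensuring that the $-\Phi\Psi$ cross term recombines precisely into $\tfrac12|\Phi-\Psi|^2$; everything else is a faithful transcription of the single-equation computation with the pairs $(F,g^\lambda,A,\Phi)$ and $(G,g^\beta,B,\Psi)$ kept distinct.
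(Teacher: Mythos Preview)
Your proposal is correct and matches the paper's approach exactly: the paper itself says ``Proof of following proposition is similar as Proposition~\ref{th3.5}, so we left to reader,'' and your outline faithfully reproduces that argument with the pairs $(F,g^\lambda,A,\Phi)$ and $(G,g^\beta,B,\Psi)$ kept distinct. Your emphasis on the shared Wiener process producing the cross term $-\Phi\Psi$ and hence $\tfrac12|\Phi-\Psi|^2$, and on the separation of $g_x^\lambda$ and $g_y^\beta$ in $J$, are precisely the two places where the computation differs from Proposition~\ref{th3.5}.
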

\subsection{\textbf{Proof of Theorem \ref{main theorem 2}} }Idea of this proof is similar to the proof of Theorem \ref{th3.6}. Here we define $\kappa_\delta$, $\theta_{\epsilon}$ and $\Phi_\delta$  similarly as in the proof of Theorem \ref{th3.6}. We will estimate each term of inequality \eqref{ku} separately in the following steps. 

\noindent
	\textbf{Step 1:} We estimate $\mathcal{R}_{\theta}$ as follows:
	\begin{align*}
		&|\mathbb{E}[\mathcal{R}_\theta]|=\bigg|\mathbb{E}\bigg[\int_0^t\int_{(\mathbb{T}^N)^2}\int_{\mathbb{R}^2}f_1(x,s,\xi)\bar{f}_2(y,s,\zeta)(F'(\xi)-G'(\xi)+G'(\xi)-G'(\zeta))\kappa_\delta(\xi-\zeta)d\xi d\zeta\cdot\\
		&\qquad\qquad\qquad\qquad\qquad\qquad\qquad\qquad\qquad\qquad\qquad\qquad\qquad\qquad\qquad\nabla\theta_\epsilon(x-y)dx dy ds\bigg]\bigg|\\
		&\le\,\bigg|\mathbb{E}\bigg[\int\limits_0^t\int\limits_{(\mathbb{T}^N)^2}\int\limits_{\mathbb{R}^2} f_1(x,s,\xi)\bar{f}_2(y,s,\zeta)\big(F'(\xi)-G'(\xi)\big)\kappa_\delta(\xi-\zeta)\cdot \nabla_x\theta_{\epsilon}(x-y)dxdyd\xi d\zeta ds\bigg]\bigg|+C\,t\, \epsilon^{-1}\,\delta^{\lambda_{G_1}}\\
		&\le\,\bigg|\mathbb{E}\bigg[\int\limits_0^t\int\limits_{(\mathbb{T}^N)^2}\int\limits_{\mathbb{R}^2} f_1(x,s,\xi)\nabla v\cdot\big(F(\xi)-G(\xi)\big) \theta_\epsilon(x-y) \kappa_{\delta}(\xi-\zeta) \delta_{\zeta=v} d\xi d\zeta dx dy ds\bigg]\bigg|+C\,t\epsilon^{-1}\,\delta^{\lambda_{G_1}}\\
		&\le\,\|F'-G'\|_{L^\infty(\mathbb{R})}\mathbb{E}\bigg[\int_0^t\int_{(\mathbb{T}^N)^2}|\nabla v| \theta_\epsilon(x-y) \Phi_{\delta}(u-v) dx dy ds\bigg]+C\,t\epsilon^{-1}\,\delta^{\lambda_{G_1}}\\
		&\le\,C\,t\,\mathbb{E}[\|v_0\|_{BV}]\|F'-G'\|_{L^\infty(\mathbb{R})}+ C\,t\,\epsilon^{-1}\,\delta^{\lambda_{G_1}}.
	\end{align*}
	\textbf{Step 2:} To improve the readability of the presentation in remaining paper, we make use of the following notation:
	$$
	g_x^\lambda[\varphi](x)= - \text{P.V.}\, \int_{\R^N} \big( \varphi(x+z) -\varphi(x)\big)\,d\mu_{\lambda}(z),  
	$$
	$$
	g_{x,r}^\lambda[\varphi](x)= -\int_{|z|\textless\,r} \big( \varphi(x+z) -\varphi(x)\big)\,d\mu_{\lambda}(z),  
	$$
	and 
	$$
	g_{x}^{\lambda,r}[\varphi](x)= -\int_{|z|\textgreater\,r} \big( \varphi(x+z) -\varphi(x)\big)\,d\mu_{\lambda}(z),  
	$$
	where $d\mu_{\lambda}(z):= \frac{dz}{|z|^{N + 2 \lambda}}$. Note that $\mu_{\lambda}$ is a nonnegative Radon measure on $\R^N\setminus \{0\}$ satisfying
	\begin{align}
		\label{important}
		\int_{\R^N\setminus \{0\}} \big(|z|^2 \wedge 1\big)\,d\mu_{\lambda}(z) < +\infty.
	\end{align}
For technical purposes (see \cite{jack,neeraj}), It is necessary to split Radon measures $\mu_{\lambda}, \mu_{\beta}$ as follows: Let $S^{\pm}$ be the sets such that
\begin{equation}
	\label{one}
	\begin{cases}
		S^{\pm} \subseteq \R^N \setminus \{0\} \text{ are Borel sets. }\\
		\cup_{\pm} S^{\pm} = \R^N \setminus \{0\}, \text{ and } \cap_{\pm} S^{\pm} = \emptyset.\\
		\R^N \setminus \{0\} \setminus \mathrm{supp}(\mu_{\lambda} - \mu_{\beta})^{\mp} \subseteq S^{\pm},
	\end{cases}
\end{equation}
and we denote $\mu_{\beta_{\pm}}$  and $\mu_{\lambda_{\pm}}$ as the restrictions of $\mu_{\beta}$ and $\mu_{\lambda}$ to $K^{\pm}$, respectively. Then it is clear to see that
\begin{equation}
	\label{two}
	\begin{cases}
		\mu_{\lambda} = \sum_{\pm}\mu_{\lambda_{\pm}},  \text{ and } \mu_{\beta} = \sum_{\pm}\mu_{\beta_{\pm}} \\
		\pm (\mu_{\lambda_{\pm}} -\mu_{\beta_{\pm}}) = (\mu_{\lambda} - \mu_{\beta})^{\pm}.\\
		\mu_{\lambda_{\pm}}, \mu_{\beta_{\pm}}, \text{ and } \pm (\mu_{\lambda_{\pm}} - \mu_{\beta_{\pm}}) \text{ all are nonnegative Radon measures satisfying \eqref{important}. }
	\end{cases}
\end{equation}
	We will now estimate $J$. Here we follow similar idea as used in estimate of term $J$ in proof of Theorem $\ref{3.6}$, we have
	\begin{align*}
		\mathbb{E}[J]&=\mathbb{E}\int_0^t\int_{(\mathbb{T}^N)^2}\int_{\mathbb{R}^N}\Phi_{\delta}(u(x,s)-v(y,s))[(\tau_z u(x,s)\\
		&\qquad\qquad\qquad-u(x,s))\mu_{\lambda}(z)-(\tau_z v(y,s)-v(y,s))\mu_\beta(z)]\theta_{\epsilon}(x-y)dz dx dy ds\notag\\
		&=:J_r+J^r	
	\end{align*}
	where 
	\begin{align*}
		J_r&=-\mathbb{E}\int_0^t\int_{(\mathbb{T}^N)^2}[g_{x,r}^{\lambda}(u)-g_{y,r}^\beta(v)]\Phi_{\delta}(u(x,s)-v(y,s))\theta_\epsilon(x-y)dx dyds,
	\end{align*}
	and
	\begin{align*}
		J^r&=-\mathbb{E}\int_0^t\int_{(\mathbb{T}^N)^2}[g_{x}^{\lambda,r}(u)-g_{y}^{\beta,r}(v)]\Phi_{\delta}(u(x,s)-v(y,s))\theta_\epsilon(x-y)dx dyds.
	\end{align*}
First we try to get bound on term $J^r$ as follows:
	\begin{align*}
		J^r&=\sum_{\pm}\mathbb{E}\bigg[\int_0^t\int_{(\mathbb{T}^N)^2}\big(g^{\beta_{\pm},r}_{y}(v)-g^{\lambda_{\pm},r}_{x}(u)\big)\Phi_{\delta}(u(x,s)-v(y,s))\theta_{\epsilon}(x-y)dxdy ds\bigg]\\
		&=\mathbb{E}\bigg[\int_0^t\int_{(\mathbb{T}^N)^2}\big(g^{\beta_{+},r}_{x}(u)-g^{\lambda_{+},r}_{x}(u)\big)\Phi_{\delta}(u(x,s)-v(y,s))\theta_{\epsilon}(x-y)dxdyds\bigg]\\
		&\qquad+\mathbb{E}\bigg[\int_0^t\int_{(\mathbb{T}^N)^2}\big(g^{\beta_{+},r}_{y}(v)-g^{\beta_{+},r}_{x}(u)\big)\Phi_{\delta}(u(x,s)-v(y,s))\theta_{\epsilon}(x-y)dxdyds\bigg]\\
		&\qquad+\mathbb{E}\bigg[\int_0^t\int_{(\mathbb{T}^N)^2}\big(g^{\beta_{-},r}_{y}(v)-g^{\lambda_{-},r}_{y}(v)\big)\Phi_{\delta}(u(x,s)-v(y,s))\theta_{\epsilon}(x-y)dxdyds\bigg]\\
		&\qquad+\mathbb{E}\bigg[\int_0^t\int_{(\mathbb{T}^N)^2}\big(g^{\lambda_{-},r}_{y}(v)-g^{\lambda_{-},r}_{x}(u)\big)\Phi_{\delta}(u(x,s)-v(y,s))\theta_{\epsilon}(x-y)dxdyds\bigg]\\
		&:=J_1+J_2+J_3+J_4.
	\end{align*}
	From proof of Theorem \ref{th3.6}, it is clear that $J_2$ and $J_4$ is non-positive. Finally, we are left with two term $J_1$ and $J_3$.
	Consider in the sequel $r_1\,\textgreater\,r$. We have
	\begin{align*}
		J_1&=\mathbb{E}\bigg[\int_0^t\int_{(\mathbb{T}^N)^2}\big(g^{\beta_{+},r,r_1}_{x}(u)-g^{\lambda_{+},r,r_1}_{x}(u)\big)\Phi_{\delta}(u(x,s)-v(y,s))\theta_{\epsilon}(x-y)dxdyds\bigg]\\
		&\qquad+\mathbb{E}\bigg[\int_0^t\int_{(\mathbb{T}^N)^2}\big(g^{\beta_{+},r_1}_{x}(u)-g^{\lambda_{+},r_1}_{x}(u)\big)\Phi_{\delta}(u(x,s)-v(y,s))\theta_{\epsilon}(x-y)dxdyds\bigg]\\
		&:=I_1+I_2.
	\end{align*}
	where the notation $g^{\lambda_{+},r,r_1}$ means that the nonlocal integration is understood in the set $\{r\,\textless\,|z|\,\le r_1\}$ (resp. with $\mu_{\lambda_{+}}$). To estimate the first term of the above equality, we observe that, by construction of the measures, the nonlocal domain of integration is always radial symmetric. Let $\gamma'=\Phi_\delta$, then
	\begin{align*}
		&\int_{(\mathbb{T}^N)^2} g_x^{\beta_{+},r,r_1}(u) \Phi_{\delta}(u(x,s)-v(y,s))\theta_{\epsilon}(x-y)dx dy ds\\
		&=\int_{(\mathbb{T}^N)^2}\int_{r\,\textless\,|z|\,\le r_1}(u(x+z,s)-u(x,s))\Phi_{\delta}(u(x,s)-v(y,s))\theta_{\epsilon}(x-y)\mu_{\beta_{+}}(z)dz dx dy ds\\
		&=\int\limits_{(\mathbb{T}^N)^2}\int\limits_{r\,\textless\,|z|\,\le r_1}[(u(x+z,s)-v(y,s))-(u(x,s)-v(y,s))]\Phi_{\delta}(u(x,s)-v(y,s))\theta_{\epsilon}(x-y)d\mu_{\beta_{+}}(z) dx dy ds\\
	&\le\int_0^t\int_{(\mathbb{T}^N)^2}\int_{r\,\textless\,|z|\,\le r_1}\bigg(\gamma\big(u(x+z,s)-v(y,s)\big)-\gamma\big(u(x,s)-v(y,s)\big)\bigg)\theta_{\epsilon}(x-y)d\mu_{\beta_{+}}(z) dx dy ds\\
		&=-\int_{(\mathbb{T}^N)^2}\gamma\big(u(x,s)-v(y,s)\big)g_x^{\beta_{+},r,r_1}(\theta_{\epsilon})(x-y) dx dy ds
	\end{align*}
	where to derive the pernultimate inequality, we have used the fact that $\gamma(b)-\gamma(a)\,\ge\,\gamma'(a)(b-a)$ with $a=\big(u(x,s)-v(y,s)\big)\,\text{and}\, b= u(x+z)-v(y,s)$. For the last equality, we have performed a change of coordinates for first integral $x\,\to\,x+z, z\to -z$.
	By similar calculation as above, we have
	\begin{align*}
		&I_1\le-\mathbb{E}\bigg[\int_0^t\int_{(\mathbb{T}^N)^2}\gamma\big(u(x,s)-v(y,s)\big)\big(g_x^{\beta_{+},r,r_1}-g^{\lambda_{+},r,r_1}_x\big)\big(\theta_{\epsilon}\big)(x-y) dx dy ds\bigg]\\
		&\le-\mathbb{E}\bigg[\int\limits_0^1\int\limits_0^t\int\limits_{(\mathbb{T}^N)^2}\int\limits_{r\,\textless\,|z|\,\le r_1}(1-\tau)\gamma(u(x,s)-v(y,s))z^T\cdot\text{Hess}_x\theta_\epsilon(x+\tau z -y)\cdot z d(\mu_{\lambda_{+}}-\mu_{\beta_{+}})(z)dxdydsd\tau\bigg]\\
		&\le\,\mathbb{E}\bigg[\int_0^1\int_0^t\int_{(\mathbb{T}^N)^2}\int_{r\,\textless\,|z|\,\le r_1} |\nabla_x \theta_\epsilon(x-\tau z-y)\cdot z d|\nabla_x[\gamma(u(x,s)-v(y,s))].z|(x)d(\mu_{\lambda_{+}}-\mu_{\beta_{+}})(z)dydsd\tau\bigg]\\
		&	\le\,\mathbb{E}\bigg[\int_0^1\int_0^t\int_{(\mathbb{T}^N)^2}\int_{r\,\textless\,|z|\,\le r_1}|z|^2\big|\nabla_x\theta_\epsilon(x-\tau z-y)\big|d\big(|Du(\cdot,t)|\big)(x)d(\mu_{\lambda_{+}}-\mu_{\beta_{+}})(z)dydsd\tau\bigg]\\
		&\le\,\frac{C\,t}{\epsilon}\,\mathbb{E}[\|u_0\|_{BV}] \int_{r\,\textless\,|z|\,\le r_1}|z|^2 d(\mu_{\lambda_{+}}-\mu_{\beta_{+}})(z)\\
		&\le\,\frac{C\,t}{\epsilon}\,\mathbb{E}[\|u_0\|_{BV}] \int_{|z|\,\le r_1}|z|^2 d(\mu_{\lambda_{+}}-\mu_{\beta_{+}})(z),
	\end{align*}
	where we have used the fact that for any Lipschitz continous function $\gamma$ (with Lipschitz constant 1), $|D\gamma(u)|\,\le\,|Du|$. On the other hand, to handle the other term we proceed as follows:
	\begin{align*}
		I_2 &\le\,C\,t\,\int_{|z|\textgreater\,r_1}\mathbb{E}\|u_0(\cdot+z)-u_0\|_{L^1(\mathbb{T}^N)}\,d(\mu_{\lambda_{+}}-\mu_{\beta_{+})},
	\end{align*}
	thanks to the contraction principle of Theorem \ref{th3.6} and the fact that $u(\cdot,\cdot+z)$ is the solution correspoing to the initial condition $u_0(\cdot+z)$.
	Note that exact same calculations will help us to estimate $J_3$. Indeed, we have
	\begin{align*}
		J_3&\le\,\frac{C\,t}{\epsilon}\mathbb{E}[\|v_0\|_{BV}]\int_{\int_{|z|\,\le\,r_1}}|z|^2 d(\mu_{\beta_{-}}-\mu_{\lambda_{-}})(z)+C\,t\,\int_{|z|\textgreater\,r_1}\mathbb{E}\|v_0(\cdot+z)-v_0\|_{L^1(\mathbb{T}^N)}\,d(\mu_{\beta_{-}}-\mu_{\lambda_{-}})(z),
	\end{align*} 
	Now, we will try to estimate $J_r$ when $r\to0.$
	First note that,
	\[
	|g_{x,r}^\lambda(\theta_{\epsilon})|\le 
	\begin{cases}
		\|D\theta_{\epsilon}\|_{L^\infty} \int_{|z|\le r} \frac{|z|}{|z|^{d+2\lambda}}\,dz, & \lambda \in (0,1/2) \\[2mm]
		\|D^2\theta_{\epsilon}\|_{L^\infty}\int_{|z|\le r} \frac{|z|^2}{|z|^{d+2\lambda}}\,dz, & \lambda \in [1/2,1)
	\end{cases}
	\]
	Thus we see that in both cases $|g_{x,r}^\lambda(\theta_\epsilon)| \le c r^s$ for some $s>0$ and  $\lim_{r\to 0}|g_{x,r}^\lambda(\theta_\epsilon)|=0$.\\
	Making of use of caclution as we used in estimating term $I_1$, we have
	\begin{align*}
		|J_r|&\,\le\,\mathbb{E}\bigg[\int_0^t\int_{(\mathbb{T}^N)^2}|\gamma\big(u(x,s)-v(y,s)\big)|\,|g_{x,r}^\lambda(\theta_\epsilon)(x-y)|+|g_{y,r}^\beta(\theta_{\epsilon})(x-y)|dx dy ds\bigg]\\
		&\le\mathbb{E}\bigg[\int_0^t\int_{(\mathbb{T})^2}|u(x,s)-v(y,s)|\big(|g_{x,r}^\lambda(\theta_{\epsilon})(x-y)|+|g_{y,r}^\beta(\theta_{\epsilon})(x-y)|\big)dx dy ds\bigg]\underset{ r \to 0} \longrightarrow 0.\\
	\end{align*} 
\textbf{Step 3:} We estimate $\mathcal{R}_{\kappa}$ as follows: $\mathbb{P}$-almost surely, for all $t\in[0,T]$,
\begin{align*}
	\mathcal{R}_{\kappa}\notag&=\frac{1}{2}\int_{(\mathbb{T}^N)^2}\theta_{\epsilon}(x-y)\int_0^t\int_{\mathbb{R}^2}\kappa_{\delta}(\xi-\zeta)|\Phi(\xi)-\Psi(\xi)+\Psi(\xi)-\Psi(\zeta)|^2 d\mathcal{V}_{x,s}^{1}\oplus\mathcal{V}_{y,s}^{2}(\xi,\zeta)dx dy ds\\
	&\le\,C\,t\,(\delta^{-1}\|\Phi-\Psi\|_{L^{\infty}}^2+\delta^{\lambda_{G_2}}). 
\end{align*}
	\textbf{Step 4:} In this step we estimate K term by using similar argument as estimate $K$ term in proof of Theorem \ref{comparison}. Since
	\begin{align*}A(\xi)+B(\zeta)=\big(\sigma(\xi)-\tau(\zeta)\big)\big(\sigma(\xi)-\tau(\zeta)\big)+\big(\sigma(\xi)\tau(\zeta)+\tau(\zeta)\sigma(\xi)\big).
	\end{align*}
	We have $\mathbb{P}$-almost surely, for all $t\in[0,T]$
	\begin{align*}
		&\int_0^t\int_{(\mathbb{T}^N)^2}\int_{\mathbb{R}^2}f_1(x,s,\xi) \bar{f}_2(y,s,\zeta)\big(\sigma(\xi)\tau(\zeta)+\tau(\zeta)\sigma(\xi)\big):D_{x}^2\theta_\epsilon(x-y)\kappa_\delta(\xi-\zeta) d\xi d\zeta dx dy ds\\
		&=2\int_0^t\int_{(\mathbb{T}^N)^2}\theta_{\epsilon}(x-y)\phi_{\delta}(u-v)\times \mbox{div}_{x}\int_0^{u}\sigma(\xi)d\xi\cdot\mbox{div}_{y}\int_0^{v}\tau(\zeta)d\zeta dx dy ds,
	\end{align*}
	and by using chain rule \ref{chain} we get
	\begin{align*}
		&-\int_0^t\int_{(\mathbb{T}^N)^2}\int_{\mathbb{R}^2}\theta_\epsilon(x-y)\kappa_\delta(\xi-\zeta) d\mathcal{V}_{x,s}^1(\xi)dx d\eta_{v,3}(y,s,\zeta)\\&\qquad-\int_0^t\int_{(\mathbb{T}^N)^2}\int_{\mathbb{R}^2}\theta(x-y)\kappa(\xi-\zeta)d\mathcal{V}_{y,s}^2(\zeta)\,dy\,d\eta_{u,3}(x,s,\xi)\\
		&=-\int_0^t\int_{(\mathbb{T}^N)^2}\theta_{\epsilon}(x-y)\phi_{\delta}(u_1-u_2)|\mbox{div}_y\int_0^{u_2}\sigma(\zeta)d\zeta|^2dx dy ds\notag\\
		&\qquad-\int_0^t\int_{(\mathbb{T}^N)^2}\theta_{\epsilon}(x-y)\phi_{\delta}(u_1-u_2)|\mbox{div}_x\int_0^{u_1}\sigma(\xi)d\xi|^2 dx dy ds.
	\end{align*}
	So we can conclude that
	\begin{align*}
		K&\le\,\int_0^t\int_{(\mathbb{T}^N)^2}\int_{\mathbb{R}^2}f_1(x,s,\xi) \bar{f}_2(y,s,\zeta)\big(\sigma(\xi)-\tau(\zeta)\big)\big(\sigma(\xi)-\tau(\zeta)\big):D_{x}^2\theta_\epsilon(x-y)\kappa_\delta(\xi-\zeta) d\xi d\zeta dx dy ds\\
		&=\int_0^t\int_{(\mathbb{T}^N)^2}\int_{\mathbb{R}^2}f_1(x,s,\xi) \bar{f}_2(y,s,\zeta)\big(\sigma(\xi)-\tau(\xi)\big)\big(\sigma(\xi)-\tau(\xi)\big):D_{x}^2\theta_\epsilon(x-y)\kappa_\delta(\xi-\zeta) d\xi d\zeta dx dy ds\\
		&\qquad+\int_0^t\int_{(\mathbb{T}^N)^2}\int_{\mathbb{R}^2}f_1(x,s,\xi) \bar{f}_2(y,s,\zeta)\big(\sigma(\xi)-\tau(\xi)\big)\big(\tau(\xi)-\tau(\zeta)\big):D_{x}^2\theta_\epsilon(x-y)\kappa_\delta(\xi-\zeta) d\xi d\zeta dx dy ds\\
		&\qquad+\int_0^t\int_{(\mathbb{T}^N)^2}\int_{\mathbb{R}^2}f_1(x,s,\xi) \bar{f}_2(y,s,\zeta)\big(\tau(\xi)-\tau(\zeta)\big)\big(\sigma(\xi)-\tau(\xi)\big):D_{x}^2\theta_\epsilon(x-y)\kappa_\delta(\xi-\zeta) d\xi d\zeta dx dy ds\\
		&\qquad+\int_0^t\int_{(\mathbb{T}^N)^2}\int_{\mathbb{R}^2}f_1(x,s,\xi) \bar{f}_2(y,s,\zeta)\big(\tau(\xi)-\tau(\zeta)\big)\big(\tau(\xi)-\tau(\zeta)\big):D_{x}^2\theta_\epsilon(x-y)\kappa_\delta(\xi-\zeta) d\xi d\zeta dx dy ds\\
		&=:K_1+K_2+K_3+K_4
	\end{align*}
	By similar calculation as proof in Theorem \ref{comparison}, we can get the following estimate
	\begin{align*}K_1\,&\le\,C\,t\,\epsilon^{-2}\|\sigma-\tau\|_{L^\infty(\mathbb{R})}^2,\\
K_2\,&\le\,C\,t\|\sigma-\tau\|_{L^\infty(\mathbb{R})}\,\delta^{\gamma_b}\,\epsilon^{-2},\\
K_3\,&\le\,C\,t\,\|\sigma-\tau\|_{L^\infty(\mathbb{R})}\,\delta^{\gamma_b}\,\epsilon^{-2},\\
K_4\,&\le\,C\,t\,\delta^{2\gamma_b}\,\epsilon^{-2}.
	\end{align*}
	\textbf{Step 5:}
	From previous steps we have for all $t\in[0,T]$
	\begin{align}\label{final}
		&\mathbb{E}\int_{(\mathbb{T}^N)^2}\int_{(\mathbb{R})^2}\theta_{\epsilon}(x-y)\kappa_{\delta}(\xi-\zeta)f_1(x,t,\xi)\bar {f}_2(y,t,\zeta)d\xi d\zeta dx dy\notag\\
		&\le\mathbb{E}\int_{(\mathbb{T}^N)}\int_{(\mathbb{R}^2)}\theta_{\epsilon}(x-y)\kappa_{\delta}(\xi-\zeta) f_{1,0} (x,\xi)\bar f_{2,0} (y,\zeta)d\xi d\zeta dx dy+C\,t\,\|F'-G\|_{L^\infty(R)}\mathbb{E}[\|v_0\|_{BV}]\notag\\
		&\qquad+C\,t\, \epsilon^{-1}\,\delta^{\lambda_{G_1}}+C\,t\,(\delta^{-1}\|\Phi-\Psi\|_{L^{\infty}}^2+\delta^{\lambda_{G_2}})+\frac{C\,t}{\epsilon}\mathbb{E}[\|u_0\|_{BV}+\|v_0\|_{BV}]\int_{|z|\,\le\,r_1}|z|^2 d|\mu_{\lambda}-\mu_{\beta}|(z)\notag\\
		&\,\qquad+C\,t\,\int_{|z|\,\textgreater\,r_1}\mathbb{E}\big(\|u_0(\cdot+z)-u_0\|_{L^1(\mathbb{T}^N}+\|v_0(\cdot+z)-v_0\|_{L^1(\mathbb{T}^N)}\big)d|\mu_\lambda-\mu_{\beta}|(z)\notag\\&\qquad+C\,t\,\epsilon^{-2}\|\sigma-\tau\|_{L^\infty(\mathbb{R})}^2+C\,t\,\|\sigma-\tau\|_{L^\infty(\mathbb{R})}\,\delta^{\gamma_b}\,\epsilon^{-2}+C\,t\,\delta^{2\gamma_b}\,\epsilon^{-2}.
	\end{align}
	Finally, we will now estimate $\mathbb{E}\int_{\mathbb{T}^N}(u(x)-v(x))_{+} dx$\,\, in terms of given data as follows:
	\begin{align*}
		&\mathbb{E}\int_{\mathbb{T}^N}(u(x,t)-v(x,t))_{+} dx=\mathbb{E}\int_{\mathbb{T}^N}\int_{\mathbb{R}^2}f_1(x,t,\xi)\bar{ f}_2(x,t,\zeta)d\xi dx\\
		=&\mathbb{E}\int_{(\mathbb{T}^N)^2}\int_{\mathbb{R}^2}\theta_{\epsilon}(x-y)\kappa_\delta(\xi-\zeta)f_1(x,t,\xi)\bar{f}_2(y,t,\zeta) d\xi d\zeta dx dy+\eta_t(u,v,\epsilon,\delta).
	\end{align*}
	where
	\begin{align*}
		\eta_t(u,v,\epsilon,\delta)&=\mathbb{E}\int_{\mathbb{T}^N}\int_{\mathbb{R}^2}f_1(x,t,\xi)\bar{ f}_2(x,t,\zeta)d\xi dx\\
		&\qquad\qquad\qquad-\mathbb{E}\int_{(\mathbb{T}^N)^2}\int_{\mathbb{R}^2}\theta_{\epsilon}(x-y)\kappa_\delta(\xi-\zeta)f_1(x,t,\xi)\bar{f}_2(y,t,\zeta) d\xi d\zeta dx dy\\
		&=\bigg(\mathbb{E}\int_{\mathbb{T}^N}\int_{\mathbb{R}^2}f_1(x,t,\xi)\bar{ f}_2(x,t,\zeta)d\xi dx\\
		&\qquad\qquad\qquad\qquad-\mathbb{E}\int_{(\mathbb{T}^N)^2}\int_{\mathbb{R}}\theta_\epsilon(x-y)f_1(x,t,\xi)\bar{ f}_2(y,s,\xi)d\xi dx dy\bigg)\\
		&\qquad\qquad+\bigg(\mathbb{E}\int_{(\mathbb{T}^N)^2}\int_{\mathbb{R}}\theta_\epsilon(x-y)f_1(x,t,\xi)\bar{ f}_2(y,s,\xi)d\xi dxdy\\
		&\qquad\qquad\qquad-\mathbb{E}\int_{(\mathbb{T}^N)^2}\int_{\mathbb{R}^2}\theta_{\epsilon}(x-y)\kappa_\delta(\xi-\zeta)f_1(x,t,\xi)\bar{f}_2(y,t,\zeta) d\xi d\zeta dx dy\bigg)\\
		&=:H_1+H_2
	\end{align*}
where
	\begin{align}\label{H1}
		|H_1|&=\bigg|\mathbb{E}\int_{(\mathbb{T}^N)^2}\theta_\epsilon(x-y)\int_{\mathbb{R}}\mathbbm{1}_{u(x)\textgreater\xi}\big[\mathbbm{1}_{u(x)\le\,\xi}-\mathbbm{1}_{v(y)\le\,\xi}\big]d\xi dx dy\bigg|\notag\\
		&=\bigg|\mathbb{E}\int_{(\mathbb{T}^N)^2}\theta_{\epsilon}(x-y)(v(x)-v(y))dx dy\bigg|\notag\\&\le\,\epsilon\,\mathbb{E}[\|v_0\|_{BV}],
	\end{align}
	The error term will estimate as follows:
	\begin{align*}
		&\bigg|\int_{(\mathbb{T}^N)^2}\int_{\mathbb{R}}\theta_{\epsilon}(x-y)f_1(x,t,\xi)\bar{f}_2(y,t,\xi) d\xi dx dy\\
		& -\int_{(\mathbb{T}^N)^2}\int_{\mathbb{R}}\theta_{\epsilon}(x-y)\kappa_{\delta}(\xi-\zeta)f_1(x,t,\xi)\bar{f}_2(y,t,\zeta)d\xi d\zeta dx dy\bigg|\\
		&=\bigg|\int_{(\mathbb{T}^N)^2}\theta_{\epsilon}(x-y)\int_{\mathbb{R}}\mathbbm{1}_{u(x)\textgreater\xi}\int_{\mathbb{R}}\kappa_{\delta}(\xi-\zeta)\big[\mathbbm{1}_{v(y)\le\xi}-\mathbbm{1}_{v(y)\le\zeta}\big]d\zeta d\xi dx dy\bigg|\\
		&\le\int_{(\mathbb{T}^N)^2}\int_{\mathbb{R}}\theta_{\epsilon}(x-y)\mathbbm{1}_{u(x)\textgreater\xi}\int_{\xi-\delta}^\xi\kappa_\delta(\xi-\zeta)\mathbbm{1}_{\zeta\textless v(y)\le\xi} d\zeta d\xi dx dy\\
		&\qquad+\int_{(\mathbb{T}^N)^2}\int_{\mathbb{R}}\theta_\epsilon(x-y)\mathbbm{1}_{u(x)\textgreater\xi}\int_{\xi}^{\xi+\delta}\kappa_{\delta}(\xi-\zeta)\mathbbm{1}_{\zeta\textless v(y)\le\zeta} d\zeta d\xi dx dy\\
		&\le \frac{1}{2}\int_{(\mathbb{T}^N)^2}\theta_{\epsilon}(x-y)\int_{v(y)}^{min\{u(x),v(y)+\delta\}} d\xi dx dy+\frac{1}{2}\int_{(\mathbb{T})^2}\theta_\epsilon(x-y)\int_{v(y)-\delta}^{min\{u(x),v(y)\}}d\xi dx dy\\
		&\le \delta.
	\end{align*}
	It gives that 
	\begin{align}\label{H2}
		|H_2|\le\,\delta,
	\end{align}
and
	\begin{align}\label{E.5}
		&\eta_t(u,v,\epsilon, \delta)\le\,\epsilon\,\mathbb{E}[\|v_0\|_{BV}]+\delta
	\end{align}
	Finally, we conclude that for all $t\in[0,T]$
	\begin{align*}
		\mathbb{E}\int_{\mathbb{T}^N}&(u(x,t)-v(x,t))_{+}dx
		\le\,\mathbb{E}\bigg[\int_{(\mathbb{T}^N)^2}\int_{\mathbb{R}^2} \theta_\epsilon(x-y)\kappa_{\delta}(\xi-\zeta) f_{1,0}(x,\xi)\bar{ f}_{2,0}(y,\zeta) d\xi d\zeta dx dy \bigg]\\
		&\qquad\qquad\qquad+\epsilon\,\mathbb{E}[\|v_0\|_{BV}]+\delta+C\,t\,\|F'-G'\|_{L^\infty(R)}\mathbb{E}[\|v_0\|_{BV}]+C\,t\, \epsilon^{-1}\,\delta^{\lambda_{G_1}}\\&\qquad\qquad\qquad+C\,t\,(\delta^{-1}\|\Phi-\Psi\|_{L^{\infty}}^2+\delta^{\lambda_{G_2}})+\frac{C\,t}{\epsilon}\big(\mathbb{E}[\|u_0\|_{BV}+\|v_0\|_{BV}]\big)\int_{|z|\,\le\,r_1}|z|^2 d|\mu_{\lambda}-\mu_{\beta}|(z)\\
		&\,\qquad\qquad\qquad+C\,t\int_{|z|\,\textgreater\,r_1}\mathbb{E}\big(\|u_0(\cdot+z)-u_0\|_{L^1(\mathbb{T}^N)}+\|v_0(\cdot+z)-v_0\|_{L^1(\mathbb{T}^N)}\big)d|\mu_\lambda-\mu_{\beta}|(z),\\
		&\qquad\qquad\qquad+C\,t\,\epsilon^{-2}\|\sigma-\tau\|_{L^\infty(\mathbb{R})}^2+C\,t\,\|\sigma-\tau\|_{L^\infty(\mathbb{R})}\,\delta^{\gamma_b}\,\epsilon^{-2}+C\,t\,\delta^{2\gamma_b}\,\epsilon^{-2}.
	\end{align*}
It shows that for all $t\in[0,T]$
	\begin{align*}
		\mathbb{E}\int_{\mathbb{T}^N}|u(x,t)-v(x,t)|dx
		&\le\,C\,\bigg(\mathbb{E}\bigg[\int_{\mathbb{T}^N}|v_0(x)-u_0(x)|\,dx\bigg] + 2\epsilon\,\big(\mathbb{E}[\|v_0\|_{BV}+\|u_0\|_{BV}]\big)+2\delta\\&\qquad+t\,\|F'-G'\|_{L^\infty(R)}\mathbb{E}[\|v_0\|_{BV}]+ t\, \epsilon^{-1}\,\delta^{\lambda_{G_1}} + t\,(\delta^{-1}\|\Phi-\Psi\|_{L^{\infty}}^2+\delta^{\lambda_{G_2}})\\&\qquad+\frac{\mathbb{E}[\|u_0\|_{BV}+\|v_0\|_{BV}]\,t}{\epsilon}\int_{|z|\,\le\,r_1}|z|^2 d|\mu_{\lambda}-\mu_{\beta}|(z)\\
		&\qquad+t\,\int_{|z|\,\textgreater\,r_1}\mathbb{E}\big(\|u_0(\cdot+z)-u_0\|_{L^1(\mathbb{T}^N)}+\|v_0(\cdot+z)-v_0\|_{L^1(\mathbb{T}^N)}\big)d|\mu_\lambda-\mu_{\beta}|(z)\\&\qquad + t\epsilon^{-2}\|\sigma-\tau\|_{L^\infty(\mathbb{R})}^2+ t\|\sigma-\tau\|_{L^\infty(\mathbb{R})}\,\delta^{\gamma_b}\,\epsilon^{-2}+t\,\delta^{2\gamma_b}\,\epsilon^{-2}\bigg).	
	\end{align*}
We can choose $\lambda_2=\max\{\frac{2}{\lambda_{G_1}},\frac{2}{\gamma_b},2\}$ and set
	$$\delta=\epsilon^{\lambda_2}=\|\Phi-\Psi\|_{L^\infty(\mathbb{\mathbb{R}})}+\sqrt{\int_{|z|\,\le\,r_1}|z|^2 d|\mu_{\lambda}-\mu_{\beta}|(z)}+\|\sigma-\tau\|_{L^\infty(\mathbb{R})}.$$
	If we assume differences are small, then we can conclude that for all $t\in[0,T]$
	\begin{align*}
		\mathbb{E}&\int_{\mathbb{T}^N}|u(x,t)-v(x,t)|dx \le\,C_T\,\bigg(\mathbb{E}\bigg[\int_{\mathbb{T}^N}|v_0(x)-u_0(x)|dx\bigg]+\|F'-G'\|_{L^\infty(\mathbb{R})}\\&\qquad+\bigg(\|\Phi-\Psi\|_{L^\infty(\mathbb{R})}+\sqrt{\int_{|z|\,\le\,r_1}|z|^2 d|\mu_{\lambda}-\mu_{\beta}|(z)}+\|\sigma-\tau\|_{L^\infty(\mathbb{R})}\bigg)^{\min\big\{\frac{1}{2},\frac{\lambda_{G_1}}{2},\lambda_{G_2}, \frac{\gamma_b}{2}\big\}}\\&\qquad+\int_{|z|\,\textgreater\,r_1}\mathbb{E}\big(\|u_0(\cdot+z)-u_0\|_{L^1(\mathbb{T}^N)}+\|v_0(\cdot+z)-v_0\|_{L^1(\mathbb{T}^N)}\big)d|\mu_\lambda-\mu_{\beta}|(z).
	\end{align*}
	This finishes the proof of Theorem \ref{main theorem 2}.
	
	\appendix
	
	\section{Derivation of the kinetic formulation.}\label{A}
	In this Appendix, we briefly derive the kinetic formulation of equation \eqref{1.1} with Lipschitz flux, because we work with  the approximations of equation \eqref{1.1}.  We show that if u is a weak solution to \eqref{1.1} such that 
	$ u\in L^2(\Omega ; C([0,T];L^2(\mathbb{T}^N)))\cap L^2(\Omega;L^2(0,T; H^1(\mathbb{T}^N))$ then $f(t)=\mathbbm{1}_{u(t)\textgreater\xi}$ satisfies
	$$df(t)+ F'\cdot\nabla f(t) dt -A:D^2 f(t) dt+ g_x^\lambda(f(t)) dt =\delta_{u(t)=\xi} \Phi dW(t) + \partial_{\xi} \big(\eta_1-\frac{1}{2} \beta^2 \delta_{u(t)=\xi}\big) dt$$
	in the sense of $\mathcal{D}'(\mathbb{T}^N\times\mathbb{R})$ where
	$$d\eta_1(x,t,\xi)=\int_{\mathbb{R}^N}|u(x+z)-\xi|\mathbbm{1}_{Conv\{u(x,t),u(x+z,t)\}}(\xi) \mu(z) dz d\xi dx dt .$$
	Indeed, it follows from generalized It\^o formula \cite[Appendix A]{vovelle}, for $\phi\in C_b^2(\mathbb{R})$ with $\phi(-\infty)=0$, $\kappa\in C^2(\mathbb{T}^N),$ $\mathbb{P}$-almost surely,
	\begin{align*}
		\langle \phi(u(t)), \kappa \rangle &= \langle \phi(u_0), \kappa \rangle -\int_0^t \langle \phi'(u) \mbox{div} (F(u)), \kappa \rangle ds-\int_0^t \langle \phi'(u)g_x^\lambda(u),\kappa \rangle ds\\
		&\qquad-\int_0^t\langle \phi''(u)\nabla u \cdot (A(u)\nabla u), \kappa \rangle ds+ \int_0^t \langle  \mbox{div}(\phi'(u)A(u)\nabla u),\kappa \rangle ds\\
		&\qquad+\sum_{k \ge 1} \int_0^t \langle \phi'(u) \beta_k(x,u),\kappa \rangle dw_k(s)+\frac{1}{2}\int_0^t\langle \phi''(u)\beta^2(x,u),\kappa \rangle ds.
	\end{align*}
	Note that, for $u\in H^{\lambda}(\mathbb{T}^N)$
	$$g_x^\lambda(u)(\cdot)=-\lim_{r\to0}\int_{|z|\textgreater r} (u(\cdot+z)-u(\cdot)) \mu(z) dz$$ with limit in $L^2(\mathbb{T}^N).$ By using it,  we have $\p$-almost surely, almost $t\in[0,T]$,
	\begin{align*}\langle \phi'(u(x,t)) g_x^\lambda&(u(x,t)),\kappa(x) \rangle \\
		&= -\lim_{r\to0}\langle \phi'(u(x,t)) \int_{|z|\textgreater r}(u(x+z,t)-u(x,t)) \mu(z) dz ,\kappa(x) \rangle, 
	\end{align*}
By making use of Taylor's identity, we have
	\begin{align*}
		&\langle \phi'(u(x,t))\int_{|z|\textgreater r}(u(x+z,t)-u(x,t))\mu(z) dz ,\kappa \rangle \\ 
		&\qquad\qquad= \int_{\mathbb{T}^N}\int_{|z|\textgreater r} \phi'(u(x,t))(u(x+z,t)-u(x,t))\kappa(x) \mu(z) dz dx\\
		&\qquad\qquad=\int_{\mathbb{T}^N}\int_{|z|\textgreater r}\kappa(x)\bigg(\int_{\mathbb{R}}(\phi'(\xi)\mathbbm{1}_{u(x+z,t)\textgreater\xi}-\phi'(\xi)\mathbbm{1}_{u(x,t)\textgreater\xi})d\xi\\
		&\qquad\qquad\qquad-\int_{\mathbb{R}}\phi''(\xi)|u(x+z,t)-\xi|\mathbbm{1}_{Conv\{u(x+z,t),u(x,t)\}} \bigg)\mu(z)dz dx\\
		&\qquad\qquad=\int_{\mathbb{T}^N}\int_{|z|\textgreater r}\int_{\mathbb{R}} \phi(\xi) \mathbbm{1}_{u(x,t)\textgreater\xi}(\kappa(x+z)-\kappa(x)))\mu(z)\,d\xi\,dz dx\\
		&\qquad\qquad\qquad-\int_{\mathbb{T}^N} \int_{\mathbb{R}}\kappa(x) \phi''(\xi)\int_{|z|\textgreater r} |u(x+z,t)-\xi|\mathbbm{1}_{Conv\{u(x+z,t),u(x,t)\}}(\xi)\mu(z)dz d\xi dx,
	\end{align*}
It implies that
	\begin{align*}
		&\langle \phi'(u(x,t)) g_x^\lambda(u)(x,t),\kappa \rangle\\
		&=-\lim_{r\to0}\int_{\mathbb{T}^N} \int_{\mathbb{R}} \phi'(\xi) \mathbbm{1}_{u(x,t)\textgreater\xi}\int_{|z|\textgreater r}(\kappa(x+z)-\kappa(x))\mu(z)d\xi dz dx\\
		&\quad+\lim_{r\to0}\int_{\mathbb{T}^N} \int_{\mathbb{R}}\kappa(x) \phi''(\xi)\int_{|z|\textgreater r} |u(x+z,t)-\xi|\mathbbm{1}_{Conv\{u(x+z,t),u(x,t)\}}(\xi)\mu(z)dz d\xi dx\\
		&=-\int_{\mathbb{T}^N} \int_{\mathbb{R}}\phi'(\xi) \mathbbm{1}_{u(x,t)\textgreater\xi}\lim_{r\to 0}\int_{|z|\textgreater r}(\kappa(x+z)-\kappa(x))\mu(z)d\xi dz dx\\
		&\quad+\int_{\mathbb{T}^N} \int_{\mathbb{R}}\kappa(x) \phi''(\xi)\lim_{r\to 0}\int_{|z|\textgreater r} |u(x+z,t)-\xi|\mathbbm{1}_{Conv\{u(x+z,t),u(x,t)\}}(\xi)\mu(z)dz d\xi dx\\
		&=\int_{\mathbb{T}^N}\int_{\mathbb{R}}\phi'(\xi) \mathbbm{1}_{u(x,t)\textgreater\xi}g_x^\alpha(\kappa)(x)dxd\xi\\
		&\qquad\qquad+\int_{\mathbb{T}^N}\int_{\mathbb{R}}\kappa(x)\phi''(\xi)\int_{\mathbb{R}^N}|u(x+z,t)-\xi|\mathbbm{1}_{Conv\{u(x+z,t),u(x,t)\}}(\xi)\mu(z) dz d\xi dx\\
		&=\langle \mathbbm{1}_{u(x,t)\textgreater \xi } \phi'(\xi), g_x^\lambda(\kappa)(x)\rangle_{x,\xi } - \langle \kappa(x) \phi'(\xi),\partial_{\xi}\eta_1\rangle_{x,\xi}
	\end{align*}
	where taking limit inside intergral in second term in right hand side is justified by the fact that, $\mathbb{P}$-almost surely, $\eta_1\in L^1(\mathbb{T}^N\times\mathbb{R}\times[ 0,T])$. Afterward, we proceed remaining term and apply the chain rule for functions from Sobolev spaces. We obtain the following identity that hold true in $\mathcal{D}'(\mathbb{T}^N),$
	\begin{align*}\langle \mathbbm{1}_{u(x,t)\textgreater\xi} , \phi'\rangle_\xi&= \int_{\mathbb{R}}\mathbbm{1}_{u(x,t)\textgreater\xi}\phi'(\xi)d\xi= \phi(u(x,t)),\\
	\phi'(u(x,t))\mbox{div}(F(u(x,t)))&=\phi'(u(x,t))F'(u(x,t)).\nabla u(x,t)\\
	&=\mbox{div}(\int_{-\infty}^{u(x,t)}F'(\xi)\phi'(\xi))= \mbox{div}(\langle F'\mathbbm{1}_{u(x,t)\textgreater\xi},\phi'\rangle_\xi),\\ 
	\phi''(u)\nabla u\cdot(A(u)\nabla u)&=-\langle\partial_{\xi} \eta_2, \phi' \rangle_\xi,\\
	\mbox{div}(\phi'(u)A(u)\nabla u)&=D^2:\bigg(\int_{-\infty}^u A(\xi) \phi'(\xi) d\xi\bigg)= D^2:\langle A\mathbbm{1}_{u(x,t)\,\textgreater\,\xi}, \phi' \rangle_{\xi},\\
	\phi'(u(x,t))\beta_k(x,u(x,t))&=\langle \beta_k \delta_{u(x,t)=\xi} ,\phi'\rangle_{\xi},\\
	\phi''(u(x,t))\beta^2(x,u(x,t))&=\langle \beta^2 \delta_{u(x,t)=\xi} ,\phi''\rangle_{\xi}=-\langle \partial_{\xi}(\beta^2\delta_{u(x,t)=\xi}), \phi' \rangle_{\xi}.
	\end{align*}
	
	Therefore, we define  $\phi=\int_{-\infty}^\xi \theta(\zeta) d\zeta$, for some $\theta\in C_c^\infty(\mathbb{R})$  to obtain the result.
	
	\subsection*{Acknowledgments}
	The author wishes to thank Ujjwal Koley for many stimulating discussions and valuable suggestions.

\end{document}